\documentclass[11pt,a4paper]{amsart}

	\usepackage[USenglish]{babel}
	\usepackage[latin1]{inputenc}    			
	\usepackage[T1]{fontenc}							

	\usepackage[bbgreekl]{mathbbol}

	\usepackage{amsfonts,amsmath,amssymb,amsthm}
	\usepackage{times}
	\usepackage{color}
	\usepackage{amscd}
	\usepackage{framed} 									
	\usepackage{upgreek}

 	\usepackage{multirow,multicol}
	\usepackage{calc}
	\usepackage{xspace}
	\usepackage{eucal}
	\usepackage{dsfont}\DeclareSymbolFont{rsfs}{U}{rsfs}{m}{n}
	\DeclareSymbolFontAlphabet{\mathrsfs}{rsfs}
	\usepackage{color}
	\usepackage{enumitem} 
	\usepackage{cite}
	\usepackage[arrow, matrix, curve]{xy}	
	
  \newtheoremstyle{style}   
		{}												    
		{}      			  	            
	  {\itshape}                    
	 	{}                            
		{\normalfont\bfseries}	 	 
		{\normalfont\bfseries .} {.5em}		
		{}
	\theoremstyle{style}
	\newtheorem{theorem}{Theorem}[section]
	\newtheorem{lemma}[theorem]{Lemma}
	\newtheorem{corollary}[theorem]{Corollary}
	\newtheorem{proposition}[theorem]{Proposition}

	\newtheorem{example}[theorem]{Example}
	
	\newtheorem{introtheorem}{Theorem}

	\newtheorem{question}[introtheorem]{Question}	

	\theoremstyle{remark}
	\newtheorem*{remark}{\textbf{Remark}}
	\newtheorem*{hypothesis}{\textbf{Hypothesis}}

	\theoremstyle{definition}
	\newtheorem{definition}[theorem]{Definition}
	
	\theoremstyle{definition}

	\theoremstyle{definition}
	
	\numberwithin{equation}{section}
	
	\allowdisplaybreaks[4]							
	\setlength{\fboxrule}{0.3mm}				

  \title{On the difference of spectral projections}
  \subjclass[2010]{Primary 47B15; Secondary 47A55, 47B35, 47A10}
	\keywords{Self-adjoint operators, perturbation theory, Hankel operators, spectrum}
	\date{\today}

  \author[C. Uebersohn]{Christoph Uebersohn}

	\address{\begin{align*}
						&\text{C.~Uebersohn,} \\ &\text{FB 08 - Institut f\"{u}r Mathematik, Johannes Gutenberg-Universit\"{a}t Mainz,} \\
						&\text{Staudinger Weg 9, D-55099 Mainz, Germany}
					\end{align*}}
					\email{uebersoc@uni-mainz.de}

\begin{document}
	\begin{abstract}
		For a semibounded self-adjoint operator $ T $ and a compact self-adjoint operator $ S $ acting on 
		a complex separable Hilbert space of infinite dimension, we study the difference
		$ D(\lambda) := E_{(-\infty, \lambda)}(T+S) - E_{(-\infty, \lambda)}(T), \, \lambda \in \mathds{R} $, 
		of the spectral projections associated with the open interval $ (-\infty, \lambda) $. 
		
		In the case when $ S $ is of rank one, we show that $ D(\lambda) $ is unitarily equivalent to a block diagonal operator 
		$ \Gamma_{\lambda} \oplus 0 $, where $ \Gamma_{\lambda} $ is a bounded self-adjoint Hankel operator, 
		for all $ \lambda \in \mathds{R} $ except for at most countably many $ \lambda $.
	
		If, more generally, $ S $ is compact, then we obtain that $ D(\lambda) $ is unitarily equivalent to an essentially Hankel operator (in the sense of Mart{\'{\i}}nez-Avenda{\~n}o) 
		on $ \ell^{2}(\mathds{N}_{0}) $ for all $ \lambda \in \mathds{R} $ except for at most countably many $ \lambda $.
	\end{abstract}
\maketitle

\section{Introduction and main results} \label{Abschnitt mit Hauptergebnissen}
	When a self-adjoint operator $ T $ is perturbed by a bounded self-adjoint operator $ S $, it is important to investigate the (spectral) properties of the difference
	\begin{equation*}
		f(T+S) - f(T),
	\end{equation*}
	where $ f $ is a real-valued Borel function on $ \mathds{R} $.  
	It is also of interest to predict the smoothness of the mapping $ S \mapsto f(T+S) - f(T) $ with respect to the smoothness of $ f $.	
	There is a vast amount of literature dedicated to these problems, see, e.\,g., Kre{\u\i}n, Farforovskaja, Peller, Birman, Solomyak, Pushnitski, Yafaev 
	\cite{Krein, Krein_II, Farforovskaja, Peller_I, Peller_II, Birman_Solomyak, Pushnitski_I, Pushnitski_II, Pushnitski_III, Pushnitski_Yafaev}, and the references therein.

	It is well known (see Kre{\u\i}n \cite{Krein}; see also Peller \cite{Peller_II}) that if $ f $ is an infinitely differentiable function with compact support and $ S $ is trace class, 
	then $ f(T+S) - f(T) $ is a trace class operator.
	
	On the other hand, if $ f = \mathds{1}_{(-\infty, \lambda)} $ is the characteristic function of the interval $ (-\infty, \lambda) $ with $ \lambda $ in the essential spectrum 
	of $ T $, then it may occur that
	\begin{equation*}
		f(T+S) - f(T)
	\end{equation*}
	is not compact, see Kre{\u\i}n's example \cite{Krein,Kostrykin}. 
	In the latter example, $ S $ is a rank one operator, and the difference $ \mathds{1}_{(-\infty, \lambda)}(T+S) - \mathds{1}_{(-\infty, \lambda)}(T) $ is 
	a bounded self-adjoint Hankel integral operator on $ L^{2}(0,\infty) $ that can be computed explicitly for all $ 0 < \lambda < 1 $.
	
	Formally, a bounded Hankel integral operator $ \Gamma $ on $ L^{2}(0,\infty) $ is a bounded integral operator 
	such that the kernel function $ k $ of $ \Gamma $ depends only on the sum of the variables:
	\begin{equation*}
		(\Gamma g)(x) = \int_{0}^{\infty} k(x+y) g(y) \mathrm{d}y, \quad g \in L^{2}(0,\infty).
	\end{equation*}
	For an introduction to the theory of Hankel operators, we refer to Peller's book \cite{Peller}.

	Inspired by Kre{\u\i}n's example, we may pose the following question.
	\begin{question}	\label{The main question}
		Let $ \lambda \in \mathds{R} $. Is it true that 
		\begin{equation*}
			D(\lambda) = E_{(-\infty, \lambda)}(T+S) - E_{(-\infty, \lambda)}(T),
		\end{equation*}
		the difference of the spectral projections, is unitarily equivalent to a bounded self-adjoint Hankel operator, provided that $ T $ is semibounded and $ S $ is of rank one?
	\end{question}
	
	Pushnitski \cite{Pushnitski_I, Pushnitski_II, Pushnitski_III, Pushnitski_Yafaev} and Yafaev \cite{Pushnitski_Yafaev} have been studying the spectral properties of 
	the operator $ D(\lambda) $ in connection with scattering theory. 
	If the absolutely continuous spectrum of $ T $ contains an open interval and under some smoothness assumptions, the results of Pushnitski and Yafaev are applicable, 
	cf.\,Section \ref{The section with finite rank perturbation} below. 
	In this case, the essential spectrum of $ D(\lambda) $ is a symmetric interval around zero.
	
	Here and for the rest of this paper, we consider a semibounded self-adjoint operator $ T $ acting on a complex separable Hilbert space $ \mathfrak{H} $ 
	of infinite dimension. 
	We denote the spectrum and the essential spectrum of $ T $ by $ \sigma(T) $ and $ \sigma_{\mathrm{ess}}(T) $, respectively. 
		
	Furthermore, we denote by $ \mathrm{span} \{ x_{i} \in \mathfrak{H} : i \in \mathcal{I} \} $ the linear span generated by the vectors $ x_{i} $, $ i \in \mathcal{I} $, where 
	$ \mathcal{I} $ is some index set. If there exists a vector $ x \in \mathfrak{H} $ such that 
	\begin{equation*}
		\overline{\mathrm{span}} \left\{ E_{\Omega}(T) x : \Omega \in \mathcal{B}(\mathds{R}) \right\}
		:= \overline{\mathrm{span} \left\{ E_{\Omega}(T) x : \Omega \in \mathcal{B}(\mathds{R}) \right\}}
		= \mathfrak{H},
	\end{equation*}
	then $ x $ is called \emph{cyclic} for $ T $. Here $ \mathcal{B}(\mathds{R}) $ denotes the sigma-algebra of Borel sets of $ \mathds{R} $.
	
	The following theorem is the main result of this paper. 
	\begin{introtheorem}	\label{new Main result}
		Let $ T $ and $ S $ be a semibounded self-adjoint operator and a self-adjoint operator of rank one acting on $ \mathfrak{H} $, respectively.
		~Then there exists a number $ k $ in $ \mathds{N} \cup \{ 0 \} $ such that for all $ \lambda $ in $ \mathds{R} $ except for 
		at most countably many $ \lambda $ in $ \sigma_{\mathrm{ess}}(T) $, the operator $ D(\lambda) $ is unitarily equivalent to a block diagonal operator 
		$ \Gamma_{\lambda} \oplus 0 $ on $ L^{2}(0, \infty) \oplus \mathds{C}^{k} $, where $ \Gamma_{\lambda} $ is a bounded self-adjoint Hankel integral operator on $ L^{2}(0, \infty) $.
	\end{introtheorem}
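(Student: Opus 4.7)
\emph{Reduction to a cyclic model.} Write $S = \alpha \langle \,\cdot\,, \varphi \rangle \varphi$ with $\varphi \in \mathfrak{H}$ a unit vector and $\alpha \in \mathds{R} \setminus \{0\}$, and set $\mathfrak{H}_{\varphi} := \overline{\mathrm{span}}\{E_{\Omega}(T) \varphi : \Omega \in \mathcal{B}(\mathds{R})\}$. Since $S \mathfrak{H} \subseteq \mathds{C}\varphi \subseteq \mathfrak{H}_{\varphi}$, both $T$ and $T+S$ reduce to $\mathfrak{H}_{\varphi}$; on $\mathfrak{H}_{\varphi}^{\perp}$ the two operators coincide, so $D(\lambda)|_{\mathfrak{H}_{\varphi}^{\perp}} = 0$. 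The infinite-dimensional part of this zero block will be absorbed into $\ker \Gamma_{\lambda}$, while any finite-dimensional remainder (of dimension $k$ independent of $\lambda$) yields the $\mathds{C}^{k}$ summand. By the cyclic form of the spectral theorem, $(\mathfrak{H}_{\varphi}, T|_{\mathfrak{H}_{\varphi}}, \varphi)$ is unitarily equivalent to $(L^{2}(\mathds{R}, d\mu), M_{x}, \mathbf{1})$, where $\mu := \langle E_{\,\cdot\,}(T) \varphi, \varphi \rangle$ is the spectral measure of $\varphi$ for $T$.

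\emph{Explicit integral representation.} In the model, the Aronszajn--Donoghue resolvent identity expresses the difference of the two resolvents as a rank-one operator involving the Borel transform $F(z) := \int_{\mathds{R}} (t - z)^{-1} \, d\mu(t)$. Combined with Stone's formula, this yields an explicit integral kernel for $D(\lambda)|_{\mathfrak{H}_{\varphi}}$ on $L^{2}(\mathds{R}, d\mu)$. Splitting $L^{2}(\mathds{R}, d\mu)$ around $\lambda$ and discarding the countably many $\lambda$'s that are either atoms of $\mu$ (eigenvalues of $T$ in $\mathfrak{H}_{\varphi}$) or zeros of $1 + \alpha F(\lambda + i0)$ with real boundary value (eigenvalues of $T+S$ in $\mathfrak{H}_{\varphi}$), the kernel of $D(\lambda)$ takes a Cauchy-type form essentially proportional to $[(x-\lambda)(\lambda-y)]^{-1}$, weighted by the density of $\mu_{\mathrm{ac}}$ and by the boundary factor $1/(1 + \alpha F(\lambda + i 0))$.

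\emph{Passage to Hankel form.} A logarithmic change of variables $u := -\log|x - \lambda|$ on the two half-axes around $\lambda$, together with unitary multiplication operators that absorb the $\mu_{\mathrm{ac}}$-densities into Lebesgue measure on $(0, \infty)$, converts the kernel into one depending only on $u + v$. The resulting operator is a bounded self-adjoint Hankel integral operator $\Gamma_{\lambda}$ on $L^{2}(0, \infty)$; boundedness is inherited from that of $D(\lambda)$. Collecting pieces yields the claimed unitary equivalence $D(\lambda) \cong \Gamma_{\lambda} \oplus 0$ on $L^{2}(0, \infty) \oplus \mathds{C}^{k}$, with the exceptional set of $\lambda$'s lying in $\sigma_{\mathrm{ess}}(T)$ because outside the essential spectrum the operators are discrete and the analysis reduces to a routine finite-rank argument.

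\emph{Main obstacle.} The hardest step is the passage to Hankel form: one must verify that the logarithmic change of variables and the subsequent weighting genuinely produce a kernel of the form $k(u+v)$, rather than something only asymptotically of that shape. This hinges on the precise interplay between the absolutely continuous density of $\mu$ near $\lambda$, the singular-continuous part of $\mu$, and the boundary values $F(\lambda + i 0)$, and it is exactly this delicacy that forces the exclusion of a countable exceptional set of $\lambda$'s and the appearance of the finite-dimensional $\mathds{C}^{k}$ block.
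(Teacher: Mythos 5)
Your route is constructive: you try to exhibit the Hankel operator explicitly, by deriving an integral kernel for $D(\lambda)$ from the rank-one resolvent identity and Stone's formula and then straightening it into the form $k(u+v)$ by a logarithmic substitution. The paper does something entirely different and purely abstract: it invokes the Megretski{\u\i}--Peller--Treil solution of the inverse spectral problem for self-adjoint Hankel operators (Theorem \ref{Charakterisierung modifiziert}), so that it suffices to verify three spectral conditions: (C1) $\mathrm{Ker}\,D(\lambda)$ is trivial or infinite dimensional (Theorem \ref{Main theorem II}, proved via the Liaw--Treil formula for the unitary intertwining the two cyclic representations together with Poisson-transform/Poltoratskii boundary-value arguments), (C2) non-invertibility, i.e.\ $0\in\sigma_{\mathrm{ess}}(D(\lambda))$ up to countably many $\lambda$ (Theorem \ref{Main theorem III}, via a Weyl-sequence construction), and (C3) $|\dim\mathrm{Ker}(D(\lambda)-I)-\dim\mathrm{Ker}(D(\lambda)+I)|\le 1$ (Lemma \ref{Lemma zur Bedingung C3}). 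Your reduction to the cyclic subspace and the origin of the $\mathds{C}^k$ block do match the paper's Cases 1--3.

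The genuine gap is in your second and third steps. The Cauchy-type kernel $[(x-\lambda)(\lambda-y)]^{-1}$ weighted by the a.c.\ density and by $1/(1+\alpha F(\lambda+\mathrm{i}0))$ is not a valid representation of $D(\lambda)$ for a general semibounded $T$: it tacitly assumes that the spectral measure $\mu$ is purely absolutely continuous near $\lambda$ with regular density, which is precisely the setting of Kre{\u\i}n's example and of Pushnitski's smooth hypothesis. For general $\mu$ (singular continuous components, dense embedded point spectrum), the projection $E_{(-\infty,\lambda)}(T+S)$ in the $L^2(\mu)$ model is governed by the full Liaw--Treil singular integral operator and admits no such closed-form kernel, so the logarithmic change of variables has nothing exact to act on and there is no reason the transformed kernel should depend only on $u+v$; indeed, even in the smooth case Pushnitski's formula gives $\sigma_{\mathrm{ess}}(D(\lambda))=[-a,a]$ with $a$ depending on the scattering matrix, which already rules out a single universal Carleman-type normal form. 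Your identification of the exceptional set is also off: atoms of $\mu$ outside $\sigma_{\mathrm{ess}}(T)$ are not exceptional (there $D(\lambda)$ is finite rank, and one still needs the Megretski{\u\i}--Peller--Treil theorem, or an explicit construction, to realize a finite-rank self-adjoint operator satisfying (C3) as a Hankel operator), while the actual exceptional points come from the possible failure of $0\in\sigma_{\mathrm{ess}}(D(\lambda))$ and are atoms of certain weak limits of spectral measures along an orthonormal sequence --- they cannot be read off from the Borel transform $F$ alone.
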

	The theory of bounded self-adjoint Hankel operators has been studied intensively by Rosenblum, Howland, Megretski{\u\i}, Peller, Treil, and others, see 
	\cite{Rosenblum_I, Rosenblum_II, Howland, Megretskii_et_al}. 
	In their 1995 paper \cite{Megretskii_et_al}, Megretski{\u\i}, Peller, and Treil have shown that every bounded self-adjoint Hankel operator can be characterized by three properties 
	concerning the spectrum and the multiplicity in the spectrum, see \cite[Theorem 1]{Megretskii_et_al}.
	
	We present a version of \cite[Theorem 1]{Megretskii_et_al} for differences of two orthogonal projections in Section \ref{Formulierung der Hilfsmittel}, 
	see Theorem \ref{Charakterisierung modifiziert} below.

	Denote by $ \ell^{2}(\mathds{N}_{0}) $ the space of all complex square summable one-sided sequences $ x=(x_{0}, x_{1}, ...) $. 
	A bounded operator $ H $ on $ \ell^{2}(\mathds{N}_{0}) $ is called \textit{essentially Hankel} if $ A^{\ast} H - H A $ is compact, 
	where $ A : (x_{0}, x_{1}, ...) \mapsto (0, x_{0}, x_{1}, ...) $ denotes the forward shift on $ \ell^{2}(\mathds{N}_{0}) $. 
	The set of essentially Hankel operators was	introduced in \cite{Martinez} by Mart{\'{\i}}nez-Avenda{\~n}o. 
	Clearly, every operator of the form 'Hankel plus compact' is essentially Hankel, 
	but the converse is not true (see \cite[Theorem 3.8]{Martinez}).
	
	For compact perturbations $ S $, we will prove the following version of Theorem \ref{new Main result}. \newpage
	\begin{introtheorem}	\label{new Main result II}
		Let $ T $ and $ S $ be a semibounded self-adjoint operator and a compact self-adjoint operator acting on $ \mathfrak{H} $, respectively. Let $ 1/4 > a_{1} > a_{2} > ... > 0 $
		be an arbitrary decreasing null sequence of real numbers. 
		~Then for all $ \lambda $ in $ \mathds{R} $ except for at most countably many $ \lambda $ in $ \sigma_{\mathrm{ess}}(T) $, 
		there exist a bounded self-adjoint Hankel operator $ \Gamma_{\lambda} $ and a compact self-adjoint operator $ K_{\lambda} $ acting on $ \ell^{2}(\mathds{N}_{0}) $  
		with the following properties:
		\begin{enumerate}
			\item $ D(\lambda) $ is unitarily equivalent to $ \Gamma_{\lambda} + K_{\lambda} $.
			\item either $ K_{\lambda} $ is a finite rank operator or $ \nu_{j}(\lambda) / a_{j} \rightarrow 0 $ as $ j \rightarrow \infty $, 
						where $ \nu_{1}(\lambda), \nu_{2}(\lambda), ... $ denote the nonzero eigenvalues of $ K_{\lambda} $ ordered by decreasing modulus (with multiplicity taken into account).
		\end{enumerate}
		In particular, $ \Gamma_{\lambda} + K_{\lambda} $ is essentially Hankel.
	\end{introtheorem}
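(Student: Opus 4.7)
The plan is to reduce the compact case to the rank-one case (Theorem~\ref{new Main result}) by truncating the spectral expansion of $S$ and controlling the tail in a singular-value-aware fashion, so that the ``bad'' Hankel part is produced by a finite-rank truncation and everything missing can be absorbed into a compact correction with arbitrarily fast eigenvalue decay.

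Set-up: use the spectral theorem to write $S = \sum_{n \geq 1} s_n \langle \cdot, \phi_n \rangle \phi_n$ with orthonormal $\{\phi_n\}$ and $|s_1| \geq |s_2| \geq \cdots \to 0$. Put $S_N := \sum_{n=1}^{N} s_n \langle \cdot, \phi_n \rangle \phi_n$, $R_N := S - S_N$ (so $\|R_N\| = |s_{N+1}|$), $T^{(n)} := T + \sum_{k \leq n} s_k \langle \cdot, \phi_k \rangle \phi_k$, and define the truncated spectral-projection difference $D_N(\lambda) := E_{(-\infty,\lambda)}(T + S_N) - E_{(-\infty,\lambda)}(T)$.

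\emph{Step 1 (finite-rank case).} Telescoping along the $T^{(n)}$ yields
\[
	D_N(\lambda) \; = \; \sum_{n=1}^{N} \bigl( E_{(-\infty,\lambda)}(T^{(n)}) - E_{(-\infty,\lambda)}(T^{(n-1)}) \bigr),
\]
a sum of $N$ rank-one spectral-projection differences. Applying Theorem~\ref{new Main result} with $T^{(n-1)}$ in the role of the unperturbed operator presents each summand, outside a countable exceptional set of $\lambda$, as unitarily equivalent to a Hankel-plus-finite-dimensional-zero block. Then invoke the MPT-type characterization Theorem~\ref{Charakterisierung modifiziert}: using that the (signed) essential spectrum, the kernel dimensions, and the absolutely continuous multiplicity function of $D_N(\lambda)$ inherit the requisite symmetry from those of each summand, conclude that $D_N(\lambda)$ is unitarily equivalent to $\Gamma^{(N)}_{\lambda} + F^{(N)}_{\lambda}$, where $\Gamma^{(N)}_{\lambda}$ is a bounded self-adjoint Hankel operator on $\ell^{2}(\mathds{N}_{0})$ and $F^{(N)}_{\lambda}$ is a self-adjoint operator of finite rank bounded by a constant depending only on $N$ (inheriting the $k$ of Theorem~\ref{new Main result}).

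\emph{Step 2 (tail control).} Use the resolvent identity
\[
	R(z, T + S) - R(z, T + S_N) \; = \; -R(z, T + S)\, R_N\, R(z, T + S_N)
\]
together with a Helffer--Sj\"ostrand or Cauchy-type representation applied to a smooth family approximating $\mathds{1}_{(-\infty,\lambda)}$ (legitimate for all $\lambda$ outside the countable set of eigenvalues of $T$ and $T+S$ clustering at $\lambda$) to establish that $s_{j}\bigl(D(\lambda) - D_{N}(\lambda)\bigr) \to 0$ as $N \to \infty$ for each fixed $j$, with rate controlled by $|s_{N+1}|$. Given the prescribed null sequence $\{a_j\}$ with $a_j < 1/4$, a diagonal choice of $N = N(\lambda,\{a_j\})$ large enough ensures that $D(\lambda) - D_N(\lambda)$ is compact self-adjoint with $j$-th eigenvalue modulus $o(a_j)$; if $S$ is already finite rank, $D(\lambda) - D_N(\lambda) = 0$ for $N$ large.

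\emph{Step 3 (assembly).} Fix $\lambda$ outside the (countable) union over $n$ of the exceptional sets produced in Step~1. Write $D(\lambda) = D_N(\lambda) + (D(\lambda) - D_N(\lambda))$ and transport through the unitary equivalence of Step~1 to obtain $D(\lambda) \cong \Gamma_\lambda + K_\lambda$, where $\Gamma_\lambda := \Gamma^{(N)}_\lambda$ and $K_\lambda := F^{(N)}_\lambda + \widetilde{R}_N(\lambda)$ with $\widetilde{R}_N(\lambda)$ the image of $D(\lambda) - D_N(\lambda)$. By construction $K_\lambda$ is compact self-adjoint with either finite rank or $\nu_j(\lambda)/a_j \to 0$. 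Essentially Hankelness of $\Gamma_\lambda + K_\lambda$ is immediate from $A^{\ast}\Gamma_\lambda - \Gamma_\lambda A = 0$ and compactness of $A^{\ast}K_\lambda - K_\lambda A$.

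The hard part will be Step~1, where the sum of $N$ Hankel summands is not Hankel and need not even equal a Hankel plus finite rank in an obvious way; the argument must lean on the abstract characterization Theorem~\ref{Charakterisierung modifiziert} applied directly to $D_N(\lambda)$, rather than on any explicit description of the summands. A secondary obstacle is Step~2, where the non-smoothness of $\mathds{1}_{(-\infty,\lambda)}$ obstructs a naive contour integral; this is what produces the at-most-countable set of excluded $\lambda$ in $\sigma_{\mathrm{ess}}(T)$, and must be handled by smooth approximation with a quantitative commutator estimate against $R_N$.
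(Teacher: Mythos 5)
Your Step 2 is where the argument breaks down, and it cannot be repaired by choosing $N$ larger. For fixed $N$, the tail $D(\lambda)-D_{N}(\lambda)=E_{(-\infty,\lambda)}(T+S)-E_{(-\infty,\lambda)}(T+S_{N})$ is itself a difference of spectral projections under the compact perturbation $R_{N}$, and the entire point of this circle of ideas (Kre\u{\i}n's example, Theorem \ref{Main theorem III}, Pushnitski's results) is that such a difference need \emph{not} be compact when $\lambda\in\sigma_{\mathrm{ess}}(T)$: its essential spectrum can be a nondegenerate interval $[-a,a]$. The discontinuity of $\mathds{1}_{(-\infty,\lambda)}$ means there is no operator-norm (let alone singular-value) estimate of $D(\lambda)-D_{N}(\lambda)$ in terms of $\|R_{N}\|=|s_{N+1}|$; a Helffer--Sj\"ostrand representation requires smooth symbols, and the error incurred in smoothing $\mathds{1}_{(-\infty,\lambda)}$ does not tend to zero. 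So the object you want to absorb into $K_{\lambda}$ is in general not compact, and the decay condition against $(a_{j})$ cannot be met this way. Step 1 also overreaches: Lemma \ref{Lemma zur Bedingung C3} only gives $|\dim\mathrm{Ker}(D_{N}(\lambda)-I)-\dim\mathrm{Ker}(D_{N}(\lambda)+I)|\leq N$, so Theorem \ref{Charakterisierung modifiziert} yields at best a block-Hankel operator of order $N$, not ``Hankel plus finite rank''; and once you add a finite-rank correction to restore (C1) and (C3) with constant $1$, the corrected operator is no longer a difference of two projections, so you must verify the full multiplicity hypotheses of the Megretski\u{\i}--Peller--Treil theorem by hand rather than cite Theorem \ref{Charakterisierung modifiziert}.

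The paper's proof avoids truncating $S$ altogether. In Lemma \ref{Vorbereitung zu new Main result II} one applies Halmos' decomposition to the pair of projections, splitting $\mathfrak{H}$ into $\mathrm{Ker}\,D(\lambda)$, the eigenspaces of $D(\lambda)$ at $\pm 1$, and a generic part on which $D(\lambda)$ is unitarily equivalent to $-D(\lambda)$ by Lemma \ref{Satz bei Chandler Davis}. The compact correction $K_{\lambda}$ is constructed \emph{directly on the kernel and the $\pm 1$-eigenspaces} so as to restore the kernel dichotomy and the symmetry of eigenvalue multiplicities required by \cite[Theorem 1]{Megretskii_et_al}; since $\dim\mathrm{Ker}(D(\lambda)-I)$ and $\dim\mathrm{Ker}(D(\lambda)+I)$ may be $\infty$ and $0$ respectively when $S$ has infinite rank, this correction genuinely needs infinitely many nonzero eigenvalues, and the sequence $(a_{j})$ merely prescribes how fast they may be chosen to decay. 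Non-invertibility, hence the countable exceptional set, comes from Theorem \ref{Main theorem III} applied to $D(\lambda)$ itself, and the semibounded case is reduced to the bounded one by resolvents. If you want to salvage your plan, replace Steps 1--2 by this direct correction of $D(\lambda)$; the telescoping and tail estimates are both unnecessary and unavailable.
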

	Moreover, the operator $ K_{\lambda} $ in Theorem \ref{new Main result II} can always be chosen of finite rank if $ S $ is of finite rank.
	
	In Sections \ref{The second section}--\ref{The section with finite rank perturbation}, the operator $ T $ is supposed to be bounded. 
	The case when $ T $ is semibounded (but not bounded) will be reduced to the bounded case by means of resolvents, 
	see Subsection \ref{Zum halbbeschraenkten Fall} and the remark in Subsection \ref{Subsection with two auxiliary results}.

	In Section \ref{The second section}, we will show that the dimensions of $ \mathrm{Ker} (D(\lambda) \pm I) $ differ by at most $ N \in \mathds{N} $ if 
	$ S $ is of rank $ N $, where $ I $ denotes the identity operator. We write this as
	\begin{equation}	\label{Ungleichung zur Symmetrie des Spektrums}
		\big| \mathrm{dim} ~ \mathrm{Ker} \big( D(\lambda) - I \big) - \mathrm{dim} ~ \mathrm{Ker} \big( D(\lambda) + I \big) \big| \leq \mathrm{rank} ~ S,
		\quad \lambda \in \mathds{R}.
	\end{equation}
	Furthermore, an example is given where equality is attained.
	
	However, there may exist $ \lambda \in \mathds{R} $ such that 
	\begin{equation*}
		\mathrm{dim} ~ \mathrm{Ker} \big( D(\lambda) - I \big) = \infty		\quad		\text{and}		\quad		\mathrm{Ker} \big( D(\lambda) + I \big) = \{ 0 \}
	\end{equation*}
	if $ S $ is a compact operator with infinite dimensional range.

	Section \ref{The third section} provides a list of sufficient conditions so that Question \ref{The main question} has a positive answer, 
	see Proposition \ref{Einige hinreichende Bedingungen}. 
	
	Moreover, if $ S = \langle \cdot, \varphi \rangle \varphi $ is a rank one operator and the vector $ \varphi $ is cyclic for $ T $, 
	then we will show in Theorem \ref{Main theorem II} that the kernel of $ D(\lambda) $ is trivial for all $ \lambda $ in the \linebreak
	interval $ ( \min \sigma_{\mathrm{ess}}(T), \max \sigma_{\mathrm{ess}}(T) ) $ and infinite dimensional for all $ \lambda $ 
	in \linebreak $ \mathds{R} \setminus [ \min \sigma_{\mathrm{ess}}(T), \max \sigma_{\mathrm{ess}}(T) ] $.
	
	In the case when $ \varphi $ is not cyclic for $ T $, Example \ref{via Krein} shows that Question \ref{The main question} may have to be answered negatively. 
	In this situation, we need to consider the block operator representation of $ D(\lambda) $ with respect to the orthogonal subspaces
	$ \overline{\mathrm{span}} \{ T^{j} \varphi : j \in \mathds{N}_{0} \} $ 
	and $ \mathfrak{H} \ominus \overline{\mathrm{span}} \{ T^{j} \varphi : j \in \mathds{N}_{0} \} $ of $ \mathfrak{H} $, 
	see Subsection \ref{Reduktion zum zyklischen Fall}.
	
	In Section \ref{The section with finite rank perturbation}, we will show that the operator $ D(\lambda) $ is non-invertible for all $ \lambda $ in $ \mathds{R} $ 
	except for at most countably many $ \lambda $ in $ \sigma_{\mathrm{ess}}(T) $, see Theorem \ref{Main theorem III}.
	
	Section \ref{Beweis des Hauptresultates} completes the proofs of Theorems \ref{new Main result} and \ref{new Main result II}. 
	In particular, it is shown that $ D(\lambda) $ is unitarily equivalent to a self-adjoint Hankel operator of finite rank for \emph{all} $ \lambda \in \mathds{R} $ 
	if $ T $ has a purely discrete spectrum and $ S $ is a rank one operator 
	(see Proposition \ref{Keine Ausnahmepunkte im Fall von purely discrete spectrum} and p.\,\pageref{Ergaenzung Proposition}).
	
	Some examples, including the almost Mathieu operator, are discussed in Section \ref{Beispiele und Anwendungen} below.
	
	The results of this paper will be part of the author's Ph.D. thesis at Johannes Gutenberg University Mainz.
	
	\section{The main tools} \label{Formulierung der Hilfsmittel}
	
	In this section, we present the main tools for the proofs of Theorems \ref{new Main result} and \ref{new Main result II}. 
	First, we state a lemma which follows immediately from \cite[Theorem 6.1]{Davis}.
	
	\begin{lemma} \label{Satz bei Chandler Davis}
		Let $ \Gamma $ be the difference of two orthogonal projections. \, Then $ \sigma(\Gamma) \subset [-1,1] $. 
		Moreover, the restricted operators $ \left. \Gamma \right|_{\mathfrak{H}_{0}} $ and $ \left. (-\Gamma) \right|_{\mathfrak{H}_{0}} $ are unitarily equivalent, where 
		the closed subspace $ \mathfrak{H}_{0} := \left[ \mathrm{Ker} ~ (\Gamma - I) \oplus \mathrm{Ker} ~ (\Gamma + I) \right]^{\bot} $ of $ \mathfrak{H} $ is reducing for $ \Gamma $.
	\end{lemma}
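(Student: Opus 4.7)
The plan is, first, to establish the spectral bound via an algebraic identity; second, to identify the $\pm 1$ eigenspaces of $\Gamma$ and confirm that $\mathfrak{H}_{0}$ reduces $\Gamma$; and, finally, to produce a self-adjoint unitary on $\mathfrak{H}_{0}$ that anticommutes with $\Gamma$, realised as the polar part of an auxiliary self-adjoint operator. Writing $\Gamma = P - Q$ with orthogonal projections $P$ and $Q$, a direct expansion using $P^{2} = P$ and $Q^{2} = Q$ yields the two identities
\begin{equation*}
	(P - Q)^{2} + (P + Q - I)^{2} = I
\end{equation*}
and
\begin{equation*}
	(P - Q)(P + Q - I) + (P + Q - I)(P - Q) = 0.
\end{equation*}
The first gives $(P - Q)^{2} \le I$, hence $\| \Gamma \| \le 1$, and the self-adjointness of $\Gamma$ then yields $\sigma(\Gamma) \subset [-1, 1]$.

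Next, I would identify the extreme eigenspaces. If $\Gamma x = x$, then $\| x \|^{2} = \langle P x, x \rangle - \langle Q x, x \rangle = \| P x \|^{2} - \| Q x \|^{2}$, which combined with $\| P x \| \le \| x \|$ and $\| Q x \| \ge 0$ forces $P x = x$ and $Q x = 0$. Hence $\mathrm{Ker} \, (\Gamma - I) = \mathrm{ran} \, P \cap \mathrm{Ker} \, Q$, and analogously $\mathrm{Ker} \, (\Gamma + I) = \mathrm{Ker} \, P \cap \mathrm{ran} \, Q$. Both subspaces are invariant under $P$ and $Q$ individually, and therefore reduce $\Gamma$; since $\Gamma$ is self-adjoint, the orthogonal complement $\mathfrak{H}_{0}$ reduces $\Gamma$ as well.

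Finally, I would set $R := P + Q - I$, a bounded self-adjoint operator on $\mathfrak{H}$. The two identities translate into $R^{2} = I - \Gamma^{2}$ and $R \Gamma + \Gamma R = 0$. On $\mathfrak{H}_{0}$ the spectrum of $\Gamma$ avoids $\pm 1$, so $(I - \Gamma^{2})|_{\mathfrak{H}_{0}}$ has trivial kernel, and therefore so does $R|_{\mathfrak{H}_{0}}$. Its polar decomposition $R|_{\mathfrak{H}_{0}} = U \, \bigl| R|_{\mathfrak{H}_{0}} \bigr|$ consequently furnishes a self-adjoint unitary $U$ on $\mathfrak{H}_{0}$. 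Since $|R|$ is a Borel function of $R^{2} = I - \Gamma^{2}$, it commutes with $\Gamma$, so the anticommutation relation rewrites as $(\Gamma U + U \Gamma) \bigl| R|_{\mathfrak{H}_{0}} \bigr| = 0$; density of the range of $\bigl| R|_{\mathfrak{H}_{0}} \bigr|$ then forces $U \Gamma U^{\ast} = -\Gamma$ on $\mathfrak{H}_{0}$. No step presents a genuine obstacle; the decisive observation is the anticommutation identity, which delivers the Chandler Davis intertwiner essentially for free via polar decomposition.
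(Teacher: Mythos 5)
Your proof is correct. The paper itself offers no argument at this point: it simply cites Theorem 6.1 of Davis's paper \emph{Separation of two linear subspaces}, so your write-up supplies a self-contained proof where the author relies on a reference. The route you take --- the pair of identities $(P-Q)^{2}+(P+Q-I)^{2}=I$ and $(P-Q)(P+Q-I)+(P+Q-I)(P-Q)=0$, followed by the polar decomposition of $R=P+Q-I$ restricted to $\mathfrak{H}_{0}$ --- is essentially the classical Davis argument, so in substance you are reconstructing the cited result rather than inventing a new path; what it buys is that the lemma no longer depends on an external source. Two small points you pass over quickly but which do hold: first, $\mathfrak{H}_{0}$ is invariant under $R$ (indeed $R$ annihilates both $\mathrm{Ker}\,(\Gamma-I)=\mathrm{Ran}\,P\cap\mathrm{Ker}\,Q$ and $\mathrm{Ker}\,(\Gamma+I)=\mathrm{Ker}\,P\cap\mathrm{Ran}\,Q$, and is self-adjoint), so $R|_{\mathfrak{H}_{0}}$ is a well-defined operator on $\mathfrak{H}_{0}$; second, the partial isometry in the polar decomposition of an injective self-adjoint operator is a self-adjoint unitary on the closure of the range, which here is all of $\mathfrak{H}_{0}$. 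With those remarks made explicit the argument is complete.
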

	
	In \cite{Megretskii_et_al}, Megretski{\u\i}, Peller, and Treil solved the inverse spectral problem for self-adjoint Hankel operators. 
	In our situation, \cite[Theorem 1]{Megretskii_et_al} reads as follows:
	
	\begin{theorem} \label{Charakterisierung modifiziert}
		The difference $ \Gamma $ of two orthogonal projections is unitarily equivalent to a bounded self-adjoint Hankel operator if and only if the following 
		three conditions hold:
		\begin{itemize}
			\item[\emph{(C1)}] either $ \mathrm{Ker} ~ \Gamma = \{ 0 \} $ or $ \mathrm{dim} ~ \mathrm{Ker} ~ \Gamma = \infty $;
			\item[\emph{(C2)}] $ \Gamma $ is non-invertible;
			\item[\emph{(C3)}] $ | \mathrm{dim} ~ \mathrm{Ker} (\Gamma - I) - \mathrm{dim} ~ \mathrm{Ker} (\Gamma + I) | \leq 1 $.
		\end{itemize}
	\end{theorem}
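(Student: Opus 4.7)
The plan is to deduce Theorem \ref{Charakterisierung modifiziert} as a specialization of the inverse spectral theorem of Megretski\u{\i}--Peller--Treil \cite[Theorem 1]{Megretskii_et_al}. In its general form, that result characterizes the bounded self-adjoint operators $R$ which are unitarily equivalent to a bounded self-adjoint Hankel operator by three conditions: \emph{(i)} $R$ is non-invertible; \emph{(ii)} $\mathrm{Ker} \, R$ is either trivial or infinite dimensional; \emph{(iii)} the total and point spectral multiplicity functions of $R$ each satisfy a near-symmetry property across the origin, namely they differ by at most one at $s$ and $-s$ for every $s > 0$. My task is to show that, for $\Gamma$ a difference of two orthogonal projections, this last condition collapses to the single inequality (C3).

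First, Lemma \ref{Satz bei Chandler Davis} guarantees that $\Gamma$ is bounded with $\sigma(\Gamma) \subset [-1,1]$, so the MPT theorem is applicable. The conditions (C1) and (C2) then correspond verbatim to \emph{(ii)} and \emph{(i)} of MPT, respectively, so the only nontrivial step is the reduction of \emph{(iii)} to (C3).

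For this, I would invoke the second assertion of Lemma \ref{Satz bei Chandler Davis}: the restriction $\left. \Gamma \right|_{\mathfrak{H}_{0}}$ is unitarily equivalent to $\left. (-\Gamma) \right|_{\mathfrak{H}_{0}}$, where $\mathfrak{H}_{0} = \left[ \mathrm{Ker} \, (\Gamma - I) \oplus \mathrm{Ker} \, (\Gamma + I) \right]^{\bot}$ reduces $\Gamma$. This unitary equivalence immediately yields exact symmetry of both the total and point multiplicity functions of $\left. \Gamma \right|_{\mathfrak{H}_{0}}$ under $s \mapsto -s$, so the MPT symmetry condition \emph{(iii)} is automatic on this reducing subspace. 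Since the points $\pm 1$ are excluded from the spectrum of the restriction by construction, the only contributions of $\Gamma$ to the point multiplicity at $s = \pm 1$ come from $\mathrm{Ker} \, (\Gamma \mp I)$, whose dimensions appear in (C3). Combining the automatic symmetry coming from $\mathfrak{H}_{0}$ with these two eigenspaces, one finds that the MPT near-symmetry condition is equivalent to (C3) alone.

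The main obstacle is the careful bookkeeping needed to verify that MPT condition \emph{(iii)}, which is formulated in terms of both total and point spectral multiplicity functions, really does reduce to the single scalar inequality (C3). Since the Chandler Davis symmetry on $\mathfrak{H}_{0}$ handles both multiplicity functions simultaneously and the only residual contributions at $\pm 1$ are purely point-spectral, this reduction should go through cleanly, and the two directions of the equivalence follow by applying the MPT theorem to the operator $\Gamma$ together with this reduction.
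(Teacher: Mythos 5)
Your proposal is correct and follows exactly the paper's route: the paper's proof is the one-line instruction to combine Lemma \ref{Satz bei Chandler Davis} with \cite[Theorem 1]{Megretskii_et_al}, and your reduction of the Megretski{\u\i}--Peller--Treil multiplicity-symmetry condition to (C3) via the exact symmetry of $\left. \Gamma \right|_{\mathfrak{H}_{0}}$ is precisely the intended (but unwritten) bookkeeping. No discrepancies to report.
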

	If $ \mathrm{dim} ~ \mathrm{Ker} (\Gamma - I) = \infty $ or $ \mathrm{dim} ~ \mathrm{Ker} (\Gamma + I) = \infty $, 
	then (C3) has to be understood as $ \mathrm{dim} ~ \mathrm{Ker} (\Gamma - I) = \mathrm{dim} ~ \mathrm{Ker} (\Gamma + I) = \infty $ (cf.\,\cite[p.\,249]{Megretskii_et_al}). 
	\begin{proof}[Proof of Theorem \ref{Charakterisierung modifiziert}]
		Combine Lemma \ref{Satz bei Chandler Davis} and \cite[Theorem 1]{Megretskii_et_al}.
	\end{proof}
	As will be shown in Section \ref{The second section}, the operator $ D(\lambda) $ satisfies condition (C3) for all $ \lambda \in \mathds{R} $ if $ S $ is a rank one operator. 
	Therefore, a sufficient condition for $ D(\lambda) $ to be unitarily equivalent to a bounded self-adjoint Hankel operator is given by:
	
	\emph{the kernel of $ D(\lambda) $ is infinite dimensional.}
	
	In Proposition \ref{Einige hinreichende Bedingungen} below, we present a list of sufficient conditions such that the kernel of $ D(\lambda) $ is infinite dimensional. 
	
	More generally, a self-adjoint \textit{block-Hankel operator of order $ N $} is a block-Hankel matrix $ (a_{j+k})_{j,k \in \mathds{N}_{0}} $, where $ a_{j} $ is an $ N \times N $ 
	matrix for every $ j $, see \cite[p.\,247]{Megretskii_et_al}. We will need the following version of Theorem \ref{Charakterisierung modifiziert}:
	
	\begin{theorem} \label{Charakterisierung modifiziert allgemeinere Version}
		The difference $ \Gamma $ of two orthogonal projections is unitarily equivalent to a bounded self-adjoint block-Hankel operator of order $ N $ if and only if the following 
		three conditions hold:
		\begin{itemize}
			\item[\emph{(C1)}] \quad either $ \mathrm{Ker} ~ \Gamma = \{ 0 \} $ or $ \mathrm{dim} ~ \mathrm{Ker} ~ \Gamma = \infty $;
			\item[\emph{(C2)}] \quad $ \Gamma $ is non-invertible;
			\item[\emph{(C3)}] \hspace{-1ex}$ _{N} $ \, $ | \mathrm{dim} ~ \mathrm{Ker} (\Gamma - I) - \mathrm{dim} ~ \mathrm{Ker} (\Gamma + I) | \leq N $.
		\end{itemize}
	\end{theorem}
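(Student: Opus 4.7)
My plan is to follow the same template as the proof of Theorem~\ref{Charakterisierung modifiziert}, replacing the appeal to the scalar Megretski{\u\i}--Peller--Treil characterization with its block analogue of order $N$, which is sketched on \cite[p.\,247 and p.\,249]{Megretskii_et_al}. That analogue reads: a bounded self-adjoint operator $R$ on a separable Hilbert space is unitarily equivalent to a self-adjoint block-Hankel operator of order $N$ if and only if $R$ is non-invertible, $\mathrm{Ker}\, R$ is either trivial or infinite dimensional, and $|n_{+}(R) - n_{-}(R)| \leq N$, where $n_{\pm}(R) := \mathrm{dim}\, \mathrm{Ker}(R \mp \|R\| I)$.

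For the implication $(C1) \wedge (C2) \wedge (C3)_{N} \Rightarrow$ block-Hankel equivalence, the first two conditions match the first two of the block MPT theorem verbatim. To verify the third, I would invoke Lemma~\ref{Satz bei Chandler Davis}: since $\sigma(\Gamma) \subset [-1,1]$ one has $\|\Gamma\| \leq 1$, and I would split into two cases. If $\|\Gamma\| = 1$, then $n_{\pm}(\Gamma) = \mathrm{dim}\, \mathrm{Ker}(\Gamma \mp I)$ and the block MPT condition coincides with $(C3)_{N}$. If $\|\Gamma\| < 1$, then $\pm 1 \notin \sigma(\Gamma)$ forces $\mathrm{Ker}(\Gamma \mp I) = \{0\}$, so $(C3)_{N}$ is automatic; moreover, in this case Lemma~\ref{Satz bei Chandler Davis} gives $\mathfrak{H}_{0} = \mathfrak{H}$, so $\Gamma$ itself is unitarily equivalent to $-\Gamma$, forcing $n_{+}(\Gamma) = n_{-}(\Gamma)$ and thus verifying the block MPT hypothesis. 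The converse direction is the same dichotomy traversed backwards: $(C1)$ and $(C2)$ restate the first two MPT conclusions for $\Gamma$, and $(C3)_{N}$ either reduces to the third MPT conclusion (when $\|\Gamma\| = 1$) or is trivially true (when $\|\Gamma\| < 1$). As in Theorem~\ref{Charakterisierung modifiziert}, the convention that $n_{+} = n_{-} = \infty$ is to be used whenever one of the dimensions is infinite.

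The only substantive obstacle, in my view, will be confirming that the block-$N$ version of \cite[Theorem 1]{Megretskii_et_al} is stated in precisely the form the argument requires, since Megretski{\u\i}, Peller, and Treil only sketch the extension to matrix-valued symbols in their paper. Once a clean statement of that block inverse spectral theorem is in place, the remainder of the argument is bookkeeping: the reduction via Lemma~\ref{Satz bei Chandler Davis} is identical to the scalar case $N = 1$, and the case split on whether $\|\Gamma\|$ attains $1$ is routine.
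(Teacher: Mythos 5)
Your overall strategy---reduce to the Megretski{\u\i}--Peller--Treil inverse spectral theorem by means of Lemma~\ref{Satz bei Chandler Davis}---is exactly the route the paper takes; its proof is literally ``combine Lemma~\ref{Satz bei Chandler Davis} and \cite[Theorem 2]{Megretskii_et_al}.'' Two corrections, one minor and one substantive. The minor one: the block-Hankel version is not merely sketched in \cite{Megretskii_et_al}; it is stated there as Theorem~2, so the ``substantive obstacle'' you worry about at the end does not exist.

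The substantive one: your statement of the MPT characterization is wrong, and the error is not harmless. The third MPT condition is \emph{not} $|\dim\mathrm{Ker}(R-\|R\|I)-\dim\mathrm{Ker}(R+\|R\|I)|\leq N$; it is a constraint on the spectral multiplicity function at \emph{every} symmetric pair of points $\pm t$ (a bound of $2N$ for the total multiplicity function and $N$ for the multiplicity function of the singular part). Your version would certify, e.g., an operator with infinite-dimensional kernel, simple eigenvalues at $\pm 1$, and an eigenvalue of multiplicity $3$ at $1/2$ with nothing at $-1/2$ as unitarily equivalent to a Hankel operator ($N=1$), which is precisely the kind of operator MPT rule out. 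Consequently your case split on whether $\|\Gamma\|=1$ does not suffice: in the case $\|\Gamma\|=1$ you never verify the multiplicity condition at the interior points $0<t<1$. The repair is to use Lemma~\ref{Satz bei Chandler Davis} at full strength rather than only when $\|\Gamma\|<1$: decompose $\mathfrak{H}=\mathrm{Ker}(\Gamma-I)\oplus\mathrm{Ker}(\Gamma+I)\oplus\mathfrak{H}_{0}$ and use that $\left.\Gamma\right|_{\mathfrak{H}_{0}}$ is unitarily equivalent to $\left.(-\Gamma)\right|_{\mathfrak{H}_{0}}$, so that the multiplicity function of $\Gamma$ (total and singular alike) is symmetric except possibly at $t=\pm 1$, where the asymmetry is exactly $|\dim\mathrm{Ker}(\Gamma-I)-\dim\mathrm{Ker}(\Gamma+I)|$. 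With that in place, conditions (C1), (C2), (C3)$_{N}$ become equivalent to the three hypotheses of \cite[Theorem 2]{Megretskii_et_al}, and both implications follow.
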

	Again, if $ \mathrm{dim} ~ \mathrm{Ker} (\Gamma - I) = \infty $ or $ \mathrm{dim} ~ \mathrm{Ker} (\Gamma + I) = \infty $, 
	then (C3)$ _{N} $ has to be understood as $ \mathrm{dim} ~ \mathrm{Ker} (\Gamma - I) = \mathrm{dim} ~ \mathrm{Ker} (\Gamma + I) = \infty $. 
	\begin{proof}[Proof of Theorem \ref{Charakterisierung modifiziert allgemeinere Version}]
		Combine Lemma \ref{Satz bei Chandler Davis} and \cite[Theorem 2]{Megretskii_et_al}.
	\end{proof}
	
	\section{On the dimension of $ \mathrm{Ker} \big( D(\lambda) \pm I \big) $}	\label{The second section}
	
	In this section, the self-adjoint operator $ T $ is assumed to be bounded. 
	The main purpose of this section is to show that the dimensions of $ \mathrm{Ker} \big( D(\lambda) \pm I \big) $ 
	do not exceed the rank of the perturbation $ S $, see Lemma \ref{Lemma zur Bedingung C3} below.
	
	In particular, condition (C3)$ _{N} $ in Theorem \ref{Charakterisierung modifiziert allgemeinere Version} is fulfilled for all $ \lambda \in \mathds{R} $ if 
	the rank of $ S $ is equal to $ N \in \mathds{N} $.
	
	\begin{lemma}	\label{Lemma zur Bedingung C3}
		Let $ T $ and $ S $ be a bounded self-adjoint operator and a self-adjoint operator of finite rank $ N $ acting on $ \mathfrak{H} $, respectively.
		Then for all $ \lambda $ in $ \mathds{R} $, one has
		\begin{equation*}
			\mathrm{dim} ~ \mathrm{Ker} \big( D(\lambda) \pm I \big) \leq N.
		\end{equation*}
	\end{lemma}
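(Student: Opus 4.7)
Abbreviate $ P := E_{(-\infty, \lambda)}(T) $ and $ Q := E_{(-\infty, \lambda)}(T+S) $, so that $ D(\lambda) = Q - P $. My starting point is the identity $ (Q-P)^{2} + (I-Q-P)^{2} = I $, valid for any two orthogonal projections; combined with $ (Q-P)x = \pm x $ it yields
\begin{equation*}
    \mathrm{Ker} ~ \bigl( D(\lambda) - I \bigr) = \mathrm{Ran}(Q) \cap \mathrm{Ker}(P),
    \qquad
    \mathrm{Ker} ~ \bigl( D(\lambda) + I \bigr) = \mathrm{Ran}(P) \cap \mathrm{Ker}(Q).
\end{equation*}
The strategy is then to show that the linear map $ x \mapsto Sx $ is \emph{injective} on each of these two subspaces. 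Since its image lies in $ \mathrm{Ran}(S) $ and $ \mathrm{dim} ~ \mathrm{Ran}(S) = N $, this will produce the bound $ \mathrm{dim} ~ \mathrm{Ker} ~ ( D(\lambda) \pm I ) \leq N $.

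To establish the injectivity, I would first take $ x \in \mathrm{Ker} ~ ( D(\lambda) - I ) \cap \mathrm{Ker}(S) $, so that $ Qx = x $, $ Px = 0 $, and $ (T+S) x = Tx $. The spectral theorem applied to $ T+S $ on $ \mathrm{Ran}(Q) $ gives $ \langle (T+S)x, x \rangle \leq \lambda \|x\|^{2} $, while applied to $ T $ on $ \mathrm{Ker}(P) = \mathrm{Ran}\bigl( E_{[\lambda, \infty)}(T) \bigr) $ it gives $ \langle Tx, x \rangle \geq \lambda \|x\|^{2} $. Because $ Sx = 0 $, the two quantities coincide, forcing both inequalities to be equalities. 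Since the spectral measure of $ T $ at $ x $ is supported in $ [\lambda, \infty) $ and satisfies $ \int (t-\lambda) \, \mathrm{d}\langle E_{(\cdot)}(T)x, x \rangle = 0 $ with non-negative integrand, the measure must be concentrated at $ \{ \lambda \} $, so that $ Tx = \lambda x $ and consequently $ (T+S)x = \lambda x $. Since $ Q $ is the spectral projection of $ T+S $ onto the \emph{open} interval $ (-\infty, \lambda) $, this forces $ Qx = 0 $, contradicting $ Qx = x $ unless $ x = 0 $. The analogue for $ x \in \mathrm{Ker} ~ ( D(\lambda) + I ) \cap \mathrm{Ker}(S) $ proceeds symmetrically: from $ Px = x $, $ Qx = 0 $, and $ Sx = 0 $ the spectral measure of $ T $ at $ x $ now lives in $ (-\infty, \lambda) $ and integrates $ (\lambda - t) $ to zero with \emph{strictly} positive integrand, so the measure vanishes identically and $ x = 0 $.

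The most delicate point in carrying this out is the careful bookkeeping at the boundary value $ \lambda $: the argument genuinely uses that the interval $ (-\infty, \lambda) $ is open, since in the first case the contradiction is produced precisely by the fact that an eigenvector of $ T+S $ for the eigenvalue $ \lambda $ cannot belong to $ \mathrm{Ran}(Q) $. Beyond this, only the spectral theorem enters; no trace-class, Kre{\u\i}n spectral shift, or Megretski{\u\i}--Peller--Treil machinery is needed.
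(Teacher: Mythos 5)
Your proposal is correct and takes essentially the same route as the paper: both arguments reduce the bound to showing $\mathrm{Ker}\big(D(\lambda)\mp I\big)\cap\mathrm{Ker}\,S=\{0\}$ and obtain this from the incompatible quadratic-form inequalities $\langle (T+S)x,x\rangle<\lambda\|x\|^{2}$ and $\langle Tx,x\rangle\geq\lambda\|x\|^{2}$ for a unit vector $x$ killed by $S$. The only (cosmetic) differences are that the paper counts dimensions by intersecting an $(N+1)$-dimensional span with $(\mathrm{Ran}\,S)^{\bot}$ instead of invoking rank--nullity for $S$ restricted to the eigenspace, and that it uses the strict inequality directly rather than your non-strict inequality followed by an equality-case analysis.
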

	\begin{proof}
		Let us write $ P_{\lambda} := E_{(-\infty, \lambda)}(T+S) $ and $ Q_{\lambda} := E_{(-\infty, \lambda)}(T) $.
		
		We will only show that $ \mathrm{dim} ~ \mathrm{Ker} ( P_{\lambda} - Q_{\lambda} - I ) \leq N $; the other inequality is proved analogously.
		
		Assume for contradiction that there exists an orthonormal system $ x_{1}, ..., x_{N+1} $ in $ \mathrm{Ker} ( P_{\lambda} - Q_{\lambda} - I ) $. 
		Choose a normed vector $ \tilde{x} $ in 
		\begin{equation*}
			\mathrm{span} \{ x_{1}, ..., x_{N+1} \} \cap (\mathrm{Ran} ~ S)^{\bot} \neq \{ 0 \}.
		\end{equation*}
		Hence $ P_{\lambda} \tilde{x} = \tilde{x} $ and $ Q_{\lambda} \tilde{x} = 0 $ and this implies
		\begin{align*}
			\langle (T+S) \tilde{x}, \tilde{x} \rangle < \lambda \quad \text{and} \quad \langle T \tilde{x}, \tilde{x} \rangle \geq \lambda
		\end{align*}
		so that
		\begin{align*}
			\lambda > \langle (T+S) \tilde{x}, \tilde{x} \rangle = \langle T \tilde{x}, \tilde{x} \rangle \geq \lambda,
		\end{align*}
		which is a contradiction.
	\end{proof}
	
	\begin{remark}
		If we consider an unbounded self-adjoint operator $ T $, then the proof of Lemma \ref{Lemma zur Bedingung C3} does not work, 
		because $ \tilde{x} $ might not belong to the domain of $ T $.
	\end{remark}
	
	The following example shows that Inequality (\ref{Ungleichung zur Symmetrie des Spektrums}) above is optimal. 
	\begin{example} % \hspace{1ex}
		\begin{enumerate}
			\item Consider the bounded self-adjoint diagonal operator
							\begin{align*}
								T = \mathrm{diag} \! \left( -1, -1/2, -1/3, -1/4, ... \right) : \ell^{2}(\mathds{N}_{0}) \rightarrow \ell^{2}(\mathds{N}_{0}) 
							\end{align*}
						and, for $ N \in \mathds{N} $, the self-adjoint diagonal operator
						\begin{align*}
							S = \mathrm{diag} ( \underbrace{-1, ..., -1}_{N ~ \mathrm{times}}, 0, ... ) : \ell^{2}(\mathds{N}_{0}) \rightarrow \ell^{2}(\mathds{N}_{0}).
						\end{align*}
						Then $ S $ is of rank $ N $, and we see that 
						\begin{align*}
							&\mathrm{dim} ~ \mathrm{Ker} \! \left( E_{(-\infty, \lambda)}(T+S) - E_{(-\infty, \lambda)}(T) - I \right) = N \\
							& \text{and} \quad \mathrm{Ker} \! \left( E_{(-\infty, \lambda)}(T+S) - E_{(-\infty, \lambda)}(T) + I \right) = \{ 0 \}
						\end{align*}
						for all $ \lambda \in (-1-1/N, -1) $.
			\item Let $ a_{0} := -1 $, $ a_{1} := -1/2 $, $ a_{2} := -1/3 $. Consider the bounded self-adjoint diagonal operator 
						\begin{align*}
							T = \mathrm{diag} \! \left( a_{0}, a_{0} + \frac{1/2}{4}, a_{1}, a_{1} + \frac{1/6}{4}, a_{0} + \frac{1/2}{5}, a_{1} + \frac{1/6}{5}, 
																			 a_{0} + \frac{1/2}{6}, ... \right)
						\end{align*}
						on $ \ell^{2}(\mathds{N}_{0}) $. Since $ |a_{0}-a_{1}| = 1/2 $ and $ |a_{1}-a_{2}| = 1/6 $, it follows that the compact self-adjoint diagonal operator
						\begin{align*}
							S = -2 \cdot \mathrm{diag} \! \left( 0, \frac{1/2}{4}, 0, \frac{1/6}{4}, \frac{1/2}{5}, \frac{1/6}{5}, \frac{1/2}{6}, ... \right)
							: \ell^{2}(\mathds{N}_{0}) \rightarrow \ell^{2}(\mathds{N}_{0}) 
						\end{align*}
						is such that
						\begin{align*}
							(+)
							\begin{cases}
								&\mathrm{dim} ~ \mathrm{Ker} \! \left( E_{(-\infty, \lambda)}(T+S) - E_{(-\infty, \lambda)}(T) - I \right) = \infty \\
								& \text{and} \quad \mathrm{Ker} \! \left( E_{(-\infty, \lambda)}(T+S) - E_{(-\infty, \lambda)}(T) + I \right) = \{ 0 \}
							\end{cases}
						\end{align*}
						for $ \lambda \in \{ -1, -1/2 \} $.
						
						Clearly, this example can be extended such that $ (+) $ holds for all $ \lambda $ in $ \{ -1, -1/2, -1/3, ... \} $.
		\end{enumerate}
	\end{example}
			
	\section{On the dimension of $ \mathrm{Ker} ~ D(\lambda) $}	\label{The third section}
	In this section, we deal with the question whether the operator $ D(\lambda) $ fulfills condition (C1) in Theorem \ref{Charakterisierung modifiziert allgemeinere Version}. 
	
	Suppose that the self-adjoint operators $ T $ and $ S $ are bounded and of finite rank, respectively. 
	We will provide a list of sufficient conditions such that the kernel of $ D(\lambda) $ is infinite dimensional for all $ \lambda $ in $ \mathds{R} $.
	
	Furthermore, we will prove that the kernel of $ D(\lambda) $ is trivial for all $ \lambda $ in the \linebreak 
	interval $ ( \min \sigma_{\mathrm{ess}}(T), \max \sigma_{\mathrm{ess}}(T) ) $ and infinite dimensional for all $ \lambda $ 
	in \linebreak $ \mathds{R} \setminus [ \min \sigma_{\mathrm{ess}}(T), \max \sigma_{\mathrm{ess}}(T) ] $, provided that 
	$ S = \langle \cdot, \varphi \rangle \varphi $ is a rank one operator and the vector $ \varphi $ is cyclic for $ T $, see Theorem \ref{Main theorem II} below.
	
	\subsection{Sufficient conditions such that $ \mathrm{dim} ~ \mathrm{Ker} ~ D(\lambda) = \infty $.}
	
	Let $ \lambda \in \mathds{R} $. If the kernel of $ D(\lambda) = E_{(-\infty, \lambda)}(T+S) - E_{(-\infty, \lambda)}(T) $ is infinite dimensional, 
	then $ D(\lambda) $ fulfills conditions (C1) and (C2) in Theorem \ref{Charakterisierung modifiziert allgemeinere Version}. 
	
	Let $ N \in \mathds{N} $ be the rank of $ S $. 
	The following proposition provides a list of sufficient conditions such that the kernel of $ D(\lambda) $ is infinite dimensional.
	
	\begin{proposition} \label{Einige hinreichende Bedingungen}
		If at least one of the following three cases occurs for $ X = T $ or for $ X = T + S $, then the operator $ D(\lambda) $ is unitarily equivalent to a 
		bounded self-adjoint block-Hankel operator of order $ N $ with infinite dimensional kernel for all $ \lambda \in \mathds{R} $.
		\begin{enumerate}
			\item \label{EW mit unendlicher Vielfachheit} 
						The spectrum of $ X $ contains an eigenvalue of infinite multiplicity. 
						In particular, this pertains to the case when the range of $ X $ is finite dimensional.
			\item The spectrum of $ X $ contains infinitely many eigenvalues with multiplicity at least $ N+1 $.
			\item \label{Vielfachheit groesser eins im stetigen Spektrum}
						The spectrum of the restricted operator $ \! \left. X \right|_{\mathfrak{E}^{\bot}} $ has multiplicity at least $ N+1 $ (not necessarily uniform), 
						where $ \mathfrak{E} := \left\{ x \in \mathfrak{H} : x \text{ is an eigenvector of } X \right\} $.
		\end{enumerate}
	\end{proposition}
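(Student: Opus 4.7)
The plan rests on a single observation: if $\mathfrak{L} \subseteq \ker S$ is a closed subspace which is invariant under $X$ (where $X$ is either $T$ or $T+S$), then because $X$ is self-adjoint, $\mathfrak{L}$ reduces $X$; moreover, since $Sv = 0$ for $v \in \mathfrak{L}$, one has $Tv = (T+S)v \in \mathfrak{L}$, so $\mathfrak{L}$ also reduces the other operator of $\{T, T+S\}$ and the two restrictions agree on $\mathfrak{L}$. Consequently $E_{(-\infty,\lambda)}(T+S)v = E_{(-\infty,\lambda)}(T)v$ for every $v \in \mathfrak{L}$ and every $\lambda$, so $\mathfrak{L} \subseteq \ker D(\lambda)$. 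The entire task therefore reduces to producing an infinite-dimensional closed $X$-invariant subspace $\mathfrak{L}$ inside $\ker S$ in each of the three cases; combined with Lemma \ref{Lemma zur Bedingung C3}, this will verify the three conditions (C1), (C2), (C3)$_N$ of Theorem \ref{Charakterisierung modifiziert allgemeinere Version} and yield the stated block-Hankel representation, with infinite-dimensional kernel as a by-product.

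For case (1), take $\mu$ to be the eigenvalue of $X$ of infinite multiplicity and set $V := \ker(X - \mu I)$. Since $S$ has rank $N$, the subspace $V \cap \ker S$ has codimension at most $N$ in $V$, hence is still infinite-dimensional, and it is $X$-invariant as a subspace of an eigenspace. For case (2), enumerate the eigenvalues $\mu_1, \mu_2, \ldots$ of $X$ with $\dim \ker(X - \mu_i I) \geq N+1$ and pick a non-zero $v_i \in \ker(X - \mu_i I) \cap \ker S$ for each $i$; the closed span of the mutually orthogonal $v_i$ is infinite-dimensional, $X$-invariant, and contained in $\ker S$.

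Case (3) is the main obstacle. Here I would invoke the spectral multiplicity theorem for $X|_{\mathfrak{E}^{\perp}}$ to obtain a Borel set $F$ in the spectrum on which the multiplicity function is at least $N+1$; on the corresponding spectral subspace $E_F(X)\mathfrak{E}^{\perp}$, the operator $X$ is unitarily equivalent to multiplication by the independent variable on a direct integral of $\mathds{C}^{N+1}$ with respect to a non-atomic measure $\mu$ (non-atomic because $X|_{\mathfrak{E}^{\perp}}$ has no point spectrum). Writing $\mathrm{Ran}\,S = \mathrm{span}\{f_1,\ldots,f_N\}$ and denoting by $\tilde f_k(x) \in \mathds{C}^{N+1}$ the fibers of the orthogonal projections of the $f_k$ onto this model space, define the measurable field of subspaces $V(x) := \mathrm{span}\{\tilde f_1(x),\ldots,\tilde f_N(x)\}^{\perp}$ inside $\mathds{C}^{N+1}$; a dimension count shows $\dim V(x) \geq 1$ at every $x$. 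The direct-integral subspace $\mathfrak{L} := \int^{\oplus} V(x)\,d\mu(x)$ is then a closed $X$-reducing subspace contained in $\ker S$, and is infinite-dimensional since $\mu$ has no atoms. The delicate step is verifying that $x \mapsto V(x)$ is a measurable field; this follows from standard facts about measurable families of subspaces in direct integrals (or from a parametrized Gram--Schmidt applied to the $\tilde f_k$). Combining the three cases, $\ker D(\lambda)$ is infinite-dimensional for every $\lambda$, giving (C1) and (C2); condition (C3)$_N$ is supplied by Lemma \ref{Lemma zur Bedingung C3}, and Theorem \ref{Charakterisierung modifiziert allgemeinere Version} completes the proof.
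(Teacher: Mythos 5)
Your proposal is correct and follows essentially the same route as the paper: an $X$-invariant (hence reducing) closed subspace contained in $\ker S = (\mathrm{Ran}\,S)^{\perp}$ lies in $\ker D(\lambda)$ for every $\lambda$, one exhibits such a subspace of infinite dimension in each of the three cases, and then Lemma \ref{Lemma zur Bedingung C3} together with Theorem \ref{Charakterisierung modifiziert allgemeinere Version} finishes the argument. The only cosmetic difference is in case (3), where you build the subspace fiber-wise inside a direct integral over a set of multiplicity $\geq N+1$, whereas the paper takes the orthogonal complement of the cyclic subspace $\mathfrak{N}$ generated by $\mathrm{Ran}\,S$ and cites Birman--Solomyak for its infinite-dimensionality --- your construction is in effect an explicit version of that same direct-integral argument, since your $\mathfrak{L}$ sits inside $\mathfrak{N}^{\perp}$.
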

	
	\begin{proof}
		By Lemma \ref{Lemma zur Bedingung C3}, we know that condition (C3)$ _{N} $ in Theorem \ref{Charakterisierung modifiziert allgemeinere Version} 
		holds true for all $ \lambda \in \mathds{R} $. 
		It remains to show that $ \mathrm{dim} ~ \mathrm{Ker} ~ D(\lambda) = \infty $ for all $ \lambda \in \mathds{R} $.
		  
		First, suppose that there exists an eigenvalue $ \lambda_{0} $ of $ X=T $ with multiplicity $ m \geq N+1 $, i.\,e. $ m \in \{ N+1,N+2, ... \} \cup \{ \infty \} $. 
		Define
		\begin{equation*}
			\mathfrak{M} := \left( \mathrm{Ran} ~ E_{ \{ \lambda_{0} \} }(T) \right) \cap (\mathrm{Ran} ~ S)^{\bot} \neq \{ 0 \}. 
		\end{equation*}
		It is easy to show that $ \mathfrak{M} $ is a closed subspace of $ \mathfrak{H} $ such that
		\begin{itemize}
			\item $ \mathrm{dim} ~ \mathfrak{M} \geq m-N $,
			\item $ \left. T \right|_{\mathfrak{M}} = \left. \left( T + S \right) \right|_{\mathfrak{M}} $,
			\item $ T(\mathfrak{M}) \subset \mathfrak{M} \quad \text{and} \quad T(\mathfrak{M}^{\bot}) \subset \mathfrak{M}^{\bot} $,
			\item $ \left( T + S \right)(\mathfrak{M}) \subset \mathfrak{M} \quad \text{and} \quad \left( T + S \right)(\mathfrak{M}^{\bot}) \subset \mathfrak{M}^{\bot} $.
		\end{itemize}
		Therefore, $ \mathfrak{M} $ is contained in the kernel of $ D(\lambda) $ for all $ \lambda \in \mathds{R} $.
		
		It follows that the kernel of $ D(\lambda) $ is infinite dimensional for all $ \lambda \in \mathds{R} $ whenever case (1) or case (2) occur for the operator $ X=T $; 
		in the case when $ X = T + S $ the proof runs analogously.
	
		Now suppose that case (3) occurs for $ X = T $. Write 
		\begin{equation*}
			S = \sum_{k=1}^{N} \alpha_{k} \langle \cdot, \varphi_{k} \rangle \varphi_{k} : \mathfrak{H} \rightarrow \mathfrak{H},
		\end{equation*}
		where $ \varphi_{1}, ..., \varphi_{N} $ form an orthonormal system in $ \mathfrak{H} $ and $ \alpha_{1}, ..., \alpha_{N} $ are nonzero real numbers. 
		Define the closed subspace $ \mathfrak{N} := \overline{\mathrm{span}} \left\{ T^{j} \varphi_{k} : j \in \mathds{N}_{0}, k = 1,...,N \right\} $ of $ \mathfrak{H} $. 
		It is well known that 
		\begin{itemize}
			\item $ \left. T \right|_{\mathfrak{N}^{\bot}} = \left. \left( T + S \right) \right|_{\mathfrak{N}^{\bot}} $,
			\item $ T(\mathfrak{N}) \subset \mathfrak{N} \quad \text{and} \quad T(\mathfrak{N}^{\bot}) \subset \mathfrak{N}^{\bot} $,
			\item $ \left( T + S \right)(\mathfrak{N}) \subset \mathfrak{N} \quad \text{and} \quad \left( T + S \right)(\mathfrak{N}^{\bot}) \subset \mathfrak{N}^{\bot} $.
		\end{itemize}
		Therefore, $ \mathfrak{N}^{\bot} $ is contained in the kernel of $ D(\lambda) $ for all $ \lambda \in \mathds{R} $.
		A standard proof using the theory of direct integrals (see \cite[Chapter 7]{Birman_Solomyak_II}, see in particular \cite[Theorem 1, p.\,177]{Birman_Solomyak_II}) 
		shows that $ \mathfrak{N}^{\bot} $ is infinite dimensional.
		
		If $ X = T + S $, then the proof runs analogously.
	
		Now the proof is complete.
	\end{proof}
	
	\subsection{The case when $ S $ is a rank one operator.}	\label{Reduktion zum zyklischen Fall}
	For the rest of this section, let us assume that $ S = \langle \cdot, \varphi \rangle \varphi $ is a rank one operator.	
		
	The following example illustrates that $ \mathrm{dim} ~ \mathrm{Ker} ~ D(\lambda) $ may attain every value in $ \mathds{N} $, provided that $ \varphi $ is not cyclic for $ T $. 
	Recall that when $ \mathrm{dim} ~ \mathrm{Ker} ~ D(\lambda) $ is neither zero nor infinity, Theorem \ref{Charakterisierung modifiziert} shows that 
	Question \ref{The main question} has to be answered negatively.
	\begin{example} \label{via Krein}
		Essentially, this is an application of Kre{\u\i}n's example from \cite[pp.\,622--624]{Krein}. \newline
		Let $ 0 < \lambda < 1 $. Consider the bounded self-adjoint integral operators $ A_{j} $, $ j = 0,1 $, with kernel functions
		\begin{equation*} 
			a_{0}(x,y) = \begin{cases}
  									 \sinh(x) \mathrm{e}^{-y}  & \text{if } x \leq y \\
   									 \sinh(y) \mathrm{e}^{-x} & \text{if } x \geq y
 									 \end{cases}
 			~~ \text{and} ~~
			a_{1}(x,y) = \begin{cases}
  									 \cosh(x) \mathrm{e}^{-y}  & \text{if } x \leq y \\
   									 \cosh(y) \mathrm{e}^{-x} & \text{if } x \geq y
 									 \end{cases}
		\end{equation*}
		on the Hilbert space $ L^{2}(0,\infty) $. 
		By \cite[pp.\,622--624]{Krein}, we know that $ A_{0} - A_{1} $ is of rank one and that 
		the difference $ E_{(-\infty, \lambda)}(A_{0}) - E_{(-\infty, \lambda)}(A_{1}) $ is a Hankel operator. 		
		Furthermore, it was shown in \cite[Theorem 1]{Kostrykin} that $ E_{(-\infty, \lambda)}(A_{0}) - E_{(-\infty, \lambda)}(A_{1}) $ has a simple purely absolutely continuous spectrum 
		filling in the interval $ [-1,1] $. 
		In particular, the kernel of 
		 \begin{equation*}
		 	E_{(-\infty, \lambda)}(A_{0}) - E_{(-\infty, \lambda)}(A_{1})
		 \end{equation*}
		 is trivial. Let $ k \in \mathds{N} $. Now consider block diagonal operators
		 \begin{align*}
		 	\widetilde{A}_{j} := A_{j} \oplus M : L^{2}(0,\infty) \oplus \mathds{C}^{k} \rightarrow L^{2}(0,\infty) \oplus \mathds{C}^{k}, \quad j = 0,1,
		 \end{align*}
		 where $ M \in \mathds{C}^{k \times k} $ is an arbitrary fixed self-adjoint matrix. Then one has
		 \begin{align*}
		 	\mathrm{dim} ~ \mathrm{Ker} \left( E_{(-\infty, \lambda)}(\widetilde{A}_{0}) - E_{(-\infty, \lambda)}(\widetilde{A}_{1}) \right) = k.
		 \end{align*}
	\end{example}
	
	The following consideration shows that (up to at most countably many $ \lambda $ in the essential spectrum of $ T $) this is the only type of counterexample 
	to Question \ref{The main question} above.
		
	The closed subspace $ \mathfrak{N}^{\bot} $ of $ \mathfrak{H} $ might be trivial, finite dimensional, or infinite dimensional, 
	where $ \mathfrak{N} := \overline{\mathrm{span}} \{ T^{j} \varphi : j \in \mathds{N}_{0} \} $.
	\begin{itemize}	
		\item[\textbf{Case 1.}] 
					If $ \mathfrak{N}^{\bot} $ is trivial, then $ \varphi $ is cyclic for $ T $ and Proposition \ref{Main result} below implies that $ D(\lambda) $ 
					is unitarily equivalent to a bounded self-adjoint Hankel operator for all $ \lambda $ in $ \mathds{R} $ except for at most countably many $ \lambda $ 
					in $ \sigma_{\mathrm{ess}}(T) $.
		\item[\textbf{Case 2.}] 
					Suppose that $ \mathrm{dim} ~ (\mathfrak{N}^{\bot}) =: k \in \mathds{N} $. We will reduce this situation to the first case.
					Let us identify $ \mathfrak{N}^{\bot} $ with $ \mathds{C}^{k} $. 
					The restricted operators $ \left. T \right|_{\mathfrak{N}^{\bot}} $ and $ \left. (T+S) \right|_{\mathfrak{N}^{\bot}} $ coincide on $ \mathfrak{N}^{\bot} $, 
					and since $ \mathfrak{N} $ reduces both $ T $ and $ T+S $ there exists a self-adjoint matrix $ M $ in $ \mathds{C}^{k \times k} $ such that $ T $ and $ T+S $ can be 
					identified with the block diagonal operators $ \left. T \right|_{\mathfrak{N}} \oplus M $ and $ \left. (T+S) \right|_{\mathfrak{N}} \oplus M $ 
					acting on $ \mathfrak{N} \oplus \mathds{C}^{k} $, respectively. Therefore, 
					\begin{equation*}
						D(\lambda)
						= \Big( E_{(-\infty, \lambda)} \big( \! \left. T \right|_{\mathfrak{N}} + \left. S \right|_{\mathfrak{N}} \big)
							- E_{(-\infty, \lambda)} \big( \! \left. T \right|_{\mathfrak{N}} \big) \Big)
							\oplus 0,	\quad \lambda \in \mathds{R},
					\end{equation*}
					and $ \varphi $ is cyclic for $ \left. T \right|_{\mathfrak{N}} $.
		\item[\textbf{Case 3.}] 
					Since $ \mathfrak{N}^{\bot} \subset \mathrm{Ker} ~ D(\lambda) $ for all $ \lambda $ in $ \mathds{R} $, 
					it follows from Lemma \ref{Lemma zur Bedingung C3} and Theorem \ref{Charakterisierung modifiziert} that $ D(\lambda) $ is unitarily equivalent to a bounded 
					self-adjoint Hankel operator for all $ \lambda $ in $ \mathds{R} $ if $ \mathfrak{N}^{\bot} $ is infinite dimensional.
	\end{itemize}
		
	\subsection{The case when $ \varphi $ is cyclic for $ T $.}	\label{The first subsection}
	This subsection is devoted to the proof of the following theorem:  
	\begin{theorem} \label{Main theorem II}
		Suppose that the self-adjoint operators $ T $ and $ S = \langle \cdot, \varphi \rangle \varphi $ are bounded and of rank one, respectively, 
		and that the vector $ \varphi $ is cyclic for $ T $. 
		Let $ \lambda \in \mathds{R} \setminus \{ \min \sigma_{\mathrm{ess}}(T), ~ \max \sigma_{\mathrm{ess}}(T) \} $. ~Then the kernel of $ D(\lambda) $ is
		\begin{enumerate}
			\item infinite dimensional if and only if $ \lambda \in \mathds{R} \setminus \left[ \min \sigma_{\mathrm{ess}}(T), \max \sigma_{\mathrm{ess}}(T) \right] $.
			\item trivial if and only if $ \lambda \in \left( \min \sigma_{\mathrm{ess}}(T), \max \sigma_{\mathrm{ess}}(T) \right) $.
		\end{enumerate}
		In particular, one has	
		\begin{align*}
			\text{either} \quad \mathrm{Ker} ~ D(\lambda) = \{ 0 \} \quad \text{or} \quad \mathrm{dim} ~ \mathrm{Ker} ~ D(\lambda) = \infty.
		\end{align*}
	\end{theorem}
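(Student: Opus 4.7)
The plan is to split the proof according to the location of $\lambda$: for $\lambda \notin [\min \sigma_{\mathrm{ess}}(T), \max \sigma_{\mathrm{ess}}(T)]$ I will show that $\ker D(\lambda)$ is infinite dimensional, whereas for $\lambda \in (\min \sigma_{\mathrm{ess}}(T), \max \sigma_{\mathrm{ess}}(T))$ I will show that $\ker D(\lambda) = \{0\}$. Since the hypothesis excludes $\lambda = \min \sigma_{\mathrm{ess}}(T)$ and $\lambda = \max \sigma_{\mathrm{ess}}(T)$, these two cases are exhaustive, and the two ``if and only if'' statements and the concluding dichotomy then follow by contraposition. The first case is the easier one: by Weyl's theorem $\sigma_{\mathrm{ess}}(T+S) = \sigma_{\mathrm{ess}}(T)$, so the spectra of $T$ and $T+S$ either in $(-\infty, \lambda)$ or in $(\lambda, \infty)$ reduce to finitely many eigenvalues of finite multiplicity; consequently $E_{(-\infty, \lambda)}(T)$ and $E_{(-\infty, \lambda)}(T+S)$ are of finite rank or of finite corank, $D(\lambda)$ is of finite rank, and $\ker D(\lambda)$ is infinite dimensional.

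For the main case I first observe that $y \in \ker D(\lambda)$ is equivalent to $Py = P'y$, where $P := E_{(-\infty, \lambda)}(T)$ and $P' := E_{(-\infty, \lambda)}(T+S)$, which yields the orthogonal decomposition
\[
\ker D(\lambda) = \mathcal{M}_- \oplus \mathcal{M}_+, \qquad
\mathcal{M}_- := \mathrm{Ran}\,P \cap \mathrm{Ran}\,P', \quad
\mathcal{M}_+ := \mathrm{Ran}(I-P) \cap \mathrm{Ran}(I-P').
\]
Exploiting the cyclicity of $\varphi$ for $T$ (and hence for $T+S$), I realize $\mathfrak{H}$ as $L^{2}(\mathds{R}, d\mu)$ with $T$ acting as multiplication by the independent variable and $\varphi$ corresponding to the constant function $1$; let $m(z) := \langle (T-z)^{-1}\varphi, \varphi\rangle = \int (t-z)^{-1} d\mu(t)$ denote the Herglotz transform of $\mu$. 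For $a \in \mathcal{M}_-$ the standard rank-one resolvent identity gives
\[
B(z) := \langle (T+S-z)^{-1}a, \varphi\rangle = \frac{\langle (T-z)^{-1}a, \varphi\rangle}{1+m(z)} =: \frac{A(z)}{1+m(z)},
\]
and both $A$ and $B$ extend analytically to the connected slit plane $\mathds{C} \setminus (-\infty, \lambda]$, because the spectral measure of $a$ is concentrated on $(-\infty, \lambda)$ under both $T$ and $T+S$.

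The assumption $\lambda < \max \sigma_{\mathrm{ess}}(T)$ then forces $\mu$ to carry mass of at least one spectral type on $(\lambda, \infty)$, and in each case the corresponding singularity of $m$ forces $B$ to vanish on an uncountable subset of that interval: in the absolutely continuous case the jump $m(x+i0) - m(x-i0) = 2\pi i \, d\mu_{\mathrm{ac}}/dx$ is nonzero on a set of positive Lebesgue measure; in the singular continuous case de la Vall\'ee Poussin's theorem yields $|1+m(x+i0)| = \infty$ on a set of positive $\mu_{s}$-measure; and in the pure point case eigenvalues accumulate at a point of $\sigma_{\mathrm{ess}}(T) \cap (\lambda, \infty)$ (provided by $\max \sigma_{\mathrm{ess}}(T) > \lambda$), producing zeros of $B$ with an interior accumulation point. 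In every situation, $B$ is real-analytic on $(\lambda, \infty)$ with an uncountable zero set and therefore vanishes identically on $\mathds{C} \setminus (-\infty, \lambda]$; by Stieltjes inversion applied to its restriction to the upper half plane, $a = 0$, so $\mathcal{M}_- = \{0\}$. The triviality of $\mathcal{M}_+$ follows from the symmetric boundary analysis on $(-\infty, \lambda)$, using $\lambda > \min \sigma_{\mathrm{ess}}(T)$. The principal obstacle is the singular continuous case, where the jump identity used for the absolutely continuous part vanishes almost everywhere in Lebesgue measure, and one must invoke instead the sharper a.e.\ divergence of $\mathrm{Im}\, m$ on the singular support.
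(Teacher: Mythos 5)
Your proposal is correct, and its core step runs along a genuinely different route from the paper's. You both start from the same decomposition $\mathrm{Ker}\,D(\lambda)=\big(\mathrm{Ran}\,P\cap\mathrm{Ran}\,P'\big)\oplus\big(\mathrm{Ker}\,P\cap\mathrm{Ker}\,P'\big)$ and both arguments ultimately rest on the same mechanism: a Cauchy-type transform that is holomorphic on $\mathds{C}\setminus(-\infty,\lambda]$ is forced to vanish on a subset of $(\lambda,\infty)$ with an interior accumulation point (supplied by $\lambda<\max\sigma_{\mathrm{ess}}(T)$ and simplicity of the spectrum), hence identically. The difference is in how the transform is produced and how one returns from its vanishing to $a=0$. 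The paper works with the explicit Liaw--Treil unitary $V_\alpha$ intertwining the two spectral representations, applies it to mollified approximations of $f\in\mathrm{Ran}\,P\cap\mathrm{Ran}\,P'$ to show that $\int_{(-\infty,\lambda]}\tilde f(t)(x-t)^{-1}\,\mathrm{d}\bbmu(t)$ vanishes $\bbmu_\alpha$-a.e.\ on $(\lambda,\infty)$, and then recovers $\tilde f=0$ from the vanishing Cauchy transform via ratios of Poisson transforms (Poltoratskii's theorem in the Jak{\v s}i{\'c}--Last form). You instead use the Aronszajn--Krein resolvent formula to get $B=A/(1+m)$ for the Borel transforms of the matrix measures $\langle E_\Omega(\cdot)a,\varphi\rangle$, and produce the zeros of $B$ by a case analysis on the spectral type of $\bbmu$ on $(\lambda,\infty)$ (jump of $m$ across the a.c.\ part, a.e.\ blow-up of $\mathrm{Im}\,m$ on the singular part, poles at accumulating atoms), finishing with Stieltjes inversion plus cyclicity of $\varphi$ for $T+S$. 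What each buys: your version stays entirely within classical rank-one perturbation theory and avoids both the Liaw--Treil formula and the mollification argument, at the price of a three-way case analysis whose singular-continuous branch needs the de la Vall\'ee Poussin boundary behaviour; the paper's version handles all spectral types in one stroke but needs the heavier inputs. Two small points you should make explicit in a write-up: the step from $B\equiv 0$ to $a=0$ uses that $\varphi$ is cyclic for $T+S$ (so that the vanishing of all $\langle E_\Omega(T+S)a,\varphi\rangle$ forces $a=0$), and in the pure point case one should note that the atoms accumulate at an interior point of $(\lambda,\infty)$ (not merely at $\lambda$) precisely because $\max\sigma_{\mathrm{ess}}(T)>\lambda$ is an accumulation point of the simple spectrum --- both of which you have at least gestured at.
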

	The proof is based on a result by Liaw and Treil \cite{Treil_und_Liaw} and some harmonic analysis.

	Theorem \ref{Main theorem II} will be an important ingredient in the proof of Proposition \ref{Main result} below. Likewise, it is of independent interest.
	Note that, according to Theorem \ref{Main theorem II}, the kernel of $ D(\lambda) $ is trivial for \emph{all} $ \lambda $ between $ \min \sigma_{\mathrm{ess}}(T) $ 
	and $ \max \sigma_{\mathrm{ess}}(T) $, no matter if the interval $ (\min \sigma_{\mathrm{ess}}(T), \max \sigma_{\mathrm{ess}}(T)) $ contains points from the resolvent set of 
	$ T $, isolated eigenvalues of $ T $, etc.
	
	It will be useful to write $ S = S_{\alpha} = \alpha \langle \cdot, \varphi \rangle \varphi $ 
	for some real number $ \alpha \neq 0 $ such that $ \| \varphi \| = 1 $.
	
	Let $ \lambda \in \mathds{R} $. Again, we write 
	\begin{equation*}
		P_{\lambda} = E_{(-\infty, \lambda)}(T+S_{\alpha}) \quad \text{and} \quad Q_{\lambda} = E_{(-\infty, \lambda)}(T).
	\end{equation*}
	Observe that the kernel of $ P_{\lambda} - Q_{\lambda} $ is equal to the orthogonal sum 
	of $ \left( \mathrm{Ran} ~ P_{\lambda} \right) \cap \left( \mathrm{Ran} ~ Q_{\lambda} \right) $ 
	and $ \left( \mathrm{Ker} ~ P_{\lambda} \right) \cap \left( \mathrm{Ker} ~ Q_{\lambda} \right) $. 
	Therefore, we will investigate the dimensions of $ \left( \mathrm{Ran} ~ P_{\lambda} \right) \cap \left( \mathrm{Ran} ~ Q_{\lambda} \right) $ 
	and $ \left( \mathrm{Ker} ~ P_{\lambda} \right) \cap \left( \mathrm{Ker} ~ Q_{\lambda} \right) $ separately.
	
	Now we follow \cite[pp.\,1948--1949]{Treil_und_Liaw} in order to represent the operators $ T $ and $ T+S_{\alpha} $ such that \cite[Theorem 2.1]{Treil_und_Liaw} is applicable.
	
	Define Borel probability measures $ \bbmu $ and $ \bbmu_{\alpha} $ on $ \mathds{R} $ by
	\begin{align*}
		\bbmu(\Omega) := \langle E_{\Omega}(T) \varphi, \varphi \rangle \quad \text{and} \quad 
		\bbmu_{\alpha}(\Omega) := \langle E_{\Omega}(T+S_{\alpha}) \varphi, \varphi \rangle, \quad
		\Omega \in \mathcal{B}(\mathds{R}),
	\end{align*}
	respectively. According to \cite[Proposition 5.18]{Schmuedgen}, there exist unitary operators $ U : \mathfrak{H} \rightarrow L^{2}(\bbmu) $ and 
	$ U_{\alpha} : \mathfrak{H} \rightarrow L^{2}(\bbmu_{\alpha}) $ such that $	U	T U^{\ast} = M_{t} $ is the multiplication operator by the independent variable on $ L^{2}(\bbmu) $, 
	$ U_{\alpha} (T+S_{\alpha}) U_{\alpha}^{\ast} = M_{s} $ is the multiplication operator by the independent variable on $ L^{2}(\bbmu_{\alpha}) $, and one has both 
	$ (U \varphi)(t) = 1 $ on $ \mathds{R} $ and $ (U_{\alpha} \varphi)(s) = 1 $ on $ \mathds{R} $.	
	Clearly, the operators $ U $ and $ U_{\alpha} $ are uniquely determined by these properties.
	By \cite[Theorem 2.1]{Treil_und_Liaw}, the unitary operator $ V_{\alpha} := U_{\alpha} U^{\ast} : L^{2}(\bbmu) \rightarrow L^{2}(\bbmu_{\alpha}) $ is given by
	\begin{align}	\label{Der unitaere Operator von Treil_und_Liaw}
		\left( V_{\alpha} f \right)(x) = f(x) - \alpha \int \frac{f(x) - f(t)}{x - t} \mathrm{d} \bbmu(t)
	\end{align}
	for all continuously differentiable functions $ f : \mathds{R} \rightarrow \mathds{C} $ with compact support. 
	For the rest of this subsection, we suppose that $ V_{\alpha} $ satisfies (\ref{Der unitaere Operator von Treil_und_Liaw}). 
	Without loss of generality, we may further assume that $ T $ is already the multiplication operator by the independent variable on $ L^{2}(\bbmu) $, 
	i.\,e., we identify $ \mathfrak{H} $ with $ L^{2}(\bbmu) $, $ T $ with $ U T U^{\ast} $, and $ T+S_{\alpha} $ with $ U	(T+S_{\alpha}) U^{\ast} $.
	
	In order to prove Theorem \ref{Main theorem II}, we need the following lemma.
	\begin{lemma} \label{Main theorem II part I}
		Let $ \lambda \in \mathds{R} \setminus \{ \max \sigma_{\mathrm{ess}}(T) \} $. 
		Then one has that the dimension of $ \left( \mathrm{Ran} ~ P_{\lambda} \right) \cap \left( \mathrm{Ran} ~ Q_{\lambda} \right) $ is
		\begin{enumerate}
			\item infinite if and only if $ \lambda > \max \sigma_{\mathrm{ess}}(T) $.
			\item zero if and only if $ \lambda < \max \sigma_{\mathrm{ess}}(T) $.
		\end{enumerate}
	\end{lemma}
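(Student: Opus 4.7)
\textbf{Proof plan for Lemma \ref{Main theorem II part I}.}
The plan is to work in the spectral representation after Liaw--Treil and analyze the intersection $(\mathrm{Ran}\,P_{\lambda}) \cap (\mathrm{Ran}\,Q_{\lambda})$ by means of a Cauchy-transform argument. Identify $\mathfrak{H}$ with $L^{2}(\bbmu)$ so that $T = M_{t}$; then $Q_{\lambda}$ is multiplication by $\chi_{(-\infty,\lambda)}$, while $P_{\lambda} = V_{\alpha}^{\ast} M_{\chi_{(-\infty,\lambda)}} V_{\alpha}$. Hence a function $f$ lies in the intersection iff $f$ vanishes $\bbmu$-a.e.\ on $[\lambda,\infty)$ and $V_{\alpha} f$ vanishes $\bbmu_{\alpha}$-a.e.\ on $[\lambda,\infty)$.

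First I would dispose of the case $\lambda > \max \sigma_{\mathrm{ess}}(T)$ by a simple codimension count: since $T$ is bounded and $S_{\alpha}$ is compact, Weyl's theorem gives $\sigma_{\mathrm{ess}}(T+S_{\alpha}) = \sigma_{\mathrm{ess}}(T)$, so both $E_{[\lambda,\infty)}(T)$ and $E_{[\lambda,\infty)}(T+S_{\alpha})$ have finite-dimensional range. Thus $\mathrm{Ran}\,Q_{\lambda}$ and $\mathrm{Ran}\,P_{\lambda}$ have finite codimension in the infinite-dimensional space $\mathfrak{H}$, and their intersection is infinite dimensional.

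The substantive case is $\lambda < \max \sigma_{\mathrm{ess}}(T)$, where I aim to show the intersection is trivial. Given $f$ in the intersection, I would approximate $f$ in $L^{2}(\bbmu)$ by $C^{1}$-functions $f_{n}$ with compact support in $(-\infty, \lambda - 1/n)$. Formula (\ref{Der unitaere Operator von Treil_und_Liaw}) then yields, for $x > \lambda$,
\begin{equation*}
(V_{\alpha} f_{n})(x) = \alpha \int \frac{f_{n}(t)}{x-t} \, \mathrm{d} \bbmu(t).
\end{equation*}
Passing to the limit (the kernel $1/(x-\cdot)$ is bounded and $L^{2}(\bbmu)$ on $(-\infty,\lambda)$ for every fixed $x > \lambda$, so $L^{2}(\bbmu_{\alpha})$-convergence on the left and dominated convergence on the right both apply), I obtain $(V_{\alpha}f)(x) = \alpha F(x)$ for $\bbmu_{\alpha}$-a.e.\ $x > \lambda$, where $F(z) := \int (z-t)^{-1} f(t) \,\mathrm{d}\bbmu(t)$ is the Cauchy transform of $f\,\mathrm{d}\bbmu$, real-analytic on $(\lambda,\infty)$. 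The hypothesis $V_{\alpha}f = 0$ on $[\lambda,\infty)$ $\bbmu_{\alpha}$-a.e.\ therefore gives $F = 0$ $\bbmu_{\alpha}$-a.e.\ on $(\lambda,\infty)$.

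The key spectral input is that $\varphi$, being cyclic for $T$, is also cyclic for $T + S_{\alpha}$ (the polynomial Krylov spans coincide), so $T + S_{\alpha}$ has simple spectrum. Combined with Weyl's theorem, this implies that no point of $\sigma_{\mathrm{ess}}(T) = \sigma_{\mathrm{ess}}(T+S_{\alpha})$ is isolated in $\mathrm{supp}(\bbmu_{\alpha})$; in particular, $\max \sigma_{\mathrm{ess}}(T) \in (\lambda,\infty)$ is an accumulation point of $\mathrm{supp}(\bbmu_{\alpha})$. If $F \not\equiv 0$ on $(\lambda,\infty)$ its zero set there is discrete, so $\{F \neq 0\}$ is open and dense near that accumulation point, which contradicts $\bbmu_{\alpha}(\{F \neq 0\} \cap (\lambda,\infty)) = 0$. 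Hence $F \equiv 0$ on $(\lambda,\infty)$, and by analytic continuation $F \equiv 0$ on $\mathbb{C} \setminus [\min \sigma(T), \lambda]$. Vanishing of the Cauchy transform off the real line forces $f\,\mathrm{d}\bbmu = 0$, i.e.\ $f = 0$ in $L^{2}(\bbmu)$.

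The main obstacle I anticipate is the rigorous extension of the Liaw--Treil formula (\ref{Der unitaere Operator von Treil_und_Liaw}) beyond the dense subspace of smooth compactly supported functions to obtain the pointwise identity $V_{\alpha}f = \alpha F$ on $(\lambda,\infty)$; once this is in hand, the conclusion follows from the analytic-continuation step, which leverages cyclicity and Weyl's theorem in a standard way.
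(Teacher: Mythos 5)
Your proposal is correct and follows essentially the same route as the paper: Case 1 by a codimension count, and Case 2 by extending the Liaw--Treil formula to $ f $ via compactly supported $ C^{1} $ approximants, deducing that the Cauchy transform $ F $ of $ f \, \mathrm{d}\bbmu $ vanishes $ \bbmu_{\alpha} $-a.e.\ on $ (\lambda,\infty) $, and then combining the fact that $ \max \sigma_{\mathrm{ess}}(T) > \lambda $ is an accumulation point of $ \mathrm{supp} ~ \bbmu_{\alpha} $ with the identity theorem to get $ F \equiv 0 $ off $ (-\infty,\lambda] $. The only divergence is the final step: you conclude $ f \, \mathrm{d}\bbmu = 0 $ from the elementary uniqueness of the Cauchy (equivalently Poisson) transform, whereas the paper takes imaginary parts and invokes the Jak{\v{s}}i{\'c}--Last result on boundary limits of ratios of Poisson transforms; both are valid, and yours is the more elementary.
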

	
	\begin{proof}	
		The idea of this proof is essentially due to the author's supervisor, Vadim Kostrykin.
		
		The well-known fact (see, e.\,g., \cite[Example 5.4]{Schmuedgen}) that $ \mathrm{supp} ~ \bbmu_{\alpha} = \sigma(T + S_{\alpha}) $ implies that the cardinality of 
		$ (\lambda, \infty) \cap \mathrm{supp} ~ \bbmu_{\alpha} $ is infinite [resp.\,finite] 
		if and only if $ \lambda < \max \sigma_{\mathrm{ess}}(T) $ [resp.\,$ \lambda > \max \sigma_{\mathrm{ess}}(T) $]. \newline
		\textbf{Case 1.} 
		The cardinality of $ (\lambda, \infty) \cap \mathrm{supp} ~ \bbmu_{\alpha} $ is finite. \newline
		Since $ \lambda > \max \sigma_{\mathrm{ess}}(T) $, it follows that
		\begin{align*}
			\mathrm{dim} ~ \mathrm{Ran} ~ E_{[\lambda, \infty)}(T+S_{\alpha}) < \infty \quad \text{and} \quad \mathrm{dim} ~ \mathrm{Ran} ~ E_{[\lambda, \infty)}(T) < \infty.
		\end{align*} 
		Therefore, $ \mathrm{Ran} ~ E_{(-\infty,\lambda)}(T+S_{\alpha}) \cap \mathrm{Ran} ~ E_{(-\infty,\lambda)}(T) $ is infinite dimensional. \newline
		\textbf{Case 2.} 
		The cardinality of $ (\lambda, \infty) \cap \mathrm{supp} ~ \bbmu_{\alpha} $ is infinite. \newline
		If $ \lambda \leq \min \sigma(T) $ or $ \lambda \leq \min \sigma(T + S_{\alpha}) $, 
		then $ \left( \mathrm{Ran} ~ P_{\lambda} \right) \cap \left( \mathrm{Ran} ~ Q_{\lambda} \right) = \{ 0 \} $, as claimed. 
		Now suppose that $ \lambda > \min \sigma(T) $ and $ \lambda > \min \sigma(T + S_{\alpha}) $.
		
		Let $ f \in \left( \mathrm{Ran} ~ P_{\lambda} \right) \cap \left( \mathrm{Ran} ~ Q_{\lambda} \right) $. Then one has
		\begin{align*}
			f(x) = 0 ~ \text{for } \bbmu \text{-almost all } x \geq \lambda \quad \text{and} \quad 
			\left( V_{\alpha}f \right)(x) = 0 ~ \text{for } \bbmu_{\alpha} \text{-almost all } x \geq \lambda.
		\end{align*}
		Choose a representative $ \tilde{f} $ in the equivalence class of $ f $ such that $ \tilde{f}(x) = 0 $ for \textit{all} $ x \geq \lambda $. 
		Let $ r \in \Big( 0, \frac{ \max \sigma_{\mathrm{ess}}(T) - \lambda }{  3 } \Big) $. 
		According to \cite[Corollary 6.4 (a)]{Knapp} and the fact that $ \bbmu $ is a finite Borel measure on $ \mathds{R} $, we know that the set of continuously differentiable 
		scalar-valued functions on $ \mathds{R} $ with compact support is dense in $ L^{2}(\bbmu) $ with respect to $ \| \cdot \|_{L^{2}(\bbmu)} $. 
		Thus, a standard mollifier argument shows that we can choose continuously differentiable functions $ \tilde{f}_{n} : \mathds{R} \rightarrow \mathds{C} $ with 
		compact support such that	
	 	\begin{align*}
	 		\big\| \tilde{f}_{n} - \tilde{f} \big\|_{L^{2}(\bbmu)} < 1/n \quad \text{and} \quad
	 		\tilde{f}_{n}(x) = 0 \text{ for all } x \geq \lambda + r, \quad n \in \mathds{N}.
	 	\end{align*}
	 	In particular, we may insert $ \tilde{f}_{n} $ into Formula (\ref{Der unitaere Operator von Treil_und_Liaw}) and obtain
	 	\begin{equation*}
	 		\left( V_{\alpha} \tilde{f}_{n} \right)(x) = \alpha \int_{(-\infty, \lambda + r)} \frac{\tilde{f}_{n}(t)}{x - t} \mathrm{d} \bbmu(t) \quad \text{for all } x \geq \lambda + 2r.
	 	\end{equation*}
	 	It is readily seen that
	 	\begin{equation*}
	 		\left( B g \right)(x) := \int_{(-\infty, \lambda + r)} \frac{g(t)}{x - t} \mathrm{d} \bbmu(t), \quad x \geq \lambda + 2r,
	 	\end{equation*}
	 	defines a bounded operator 
		$ B : L^{2} \! \left( \mathds{1}_{(-\infty, \lambda + r)} \mathrm{d} \bbmu \right) \rightarrow 
		L^{2} \! \left( \mathds{1}_{ \left[ \! \right. \left. \lambda + 2r, \infty \right) } \mathrm{d} \bbmu_{\alpha} \right) $ with operator norm $ \leq 1/r $. 
	 	It is now easy to show that
	 	\begin{equation*} \tag{$ \ast $}
	 		\int_{(-\infty, \lambda]} \frac{\tilde{f}(t)}{x - t} \mathrm{d} \bbmu(t) = 0 \quad \text{for } \bbmu_{\alpha} \text{-almost all } x \geq \lambda + 2r. 
	 	\end{equation*}
	 	As $ r \in \Big( 0, \frac{ \max \sigma_{\mathrm{ess}}(T) - \lambda }{  3 } \Big) $ in ($ \ast $) was arbitrary, we get that
	 	\begin{equation*}
	 		\int_{(-\infty, \lambda]} \frac{\tilde{f}(t)}{x - t} \mathrm{d} \bbmu(t) = 0 \quad \text{for } \bbmu_{\alpha} \text{-almost all } x > \lambda. 
	 	\end{equation*}
 		From now on, we may assume without loss of generality that $ \tilde{f} $ is real-valued.

		Consider the holomorphic function from $ \mathds{C} \setminus (-\infty, \lambda] $ to $ \mathds{C} $ defined by
	 	\begin{equation*}
	 		z \mapsto \int_{(-\infty, \lambda]} \frac{\tilde{f}(t)}{z - t} \mathrm{d} \bbmu(t).
	 	\end{equation*}
		Since $ \lambda < \max \sigma_{\mathrm{ess}}(T) $, the identity theorem for holomorphic functions implies that
		\begin{equation*}
			\int_{(-\infty, \lambda]} \frac{\tilde{f}(t)}{z - t} \mathrm{d} \bbmu(t) = 0 \quad \text{for all } z \in \mathds{C} \setminus (-\infty, \lambda].
		\end{equation*}
		This yields
		\begin{equation*} \tag{$ \ast \ast $}
			\int_{(-\infty, \lambda]} \frac{\tilde{f}(t)}{(x - t)^{2} + y^{2}} \mathrm{d} \bbmu(t) = 0 \quad \text{for all } x \in \mathds{R}, ~ y > 0.
		\end{equation*}
		Consider the positive finite Borel measure $ \nu_{1} : \mathcal{B}(\mathds{R}) \rightarrow [0, \infty) $ and 
		the finite signed Borel measure $ \nu_{2} : \mathcal{B}(\mathds{R}) \rightarrow \mathds{R} $ defined by
		\begin{align*}
			\nu_{1}(\Omega) := \int_{\Omega \cap (-\infty, \lambda]} \mathrm{d} \bbmu(t), \quad 
			\nu_{2}(\Omega) := \int_{\Omega \cap (-\infty, \lambda]} \tilde{f}(t) \mathrm{d} \bbmu(t);
		\end{align*}
		note that $ \tilde{f} $ belongs to $ L^{1}(\bbmu) $.
		
		Denote by $ p_{\nu_{j}} : \{ x + \mathrm{i} y \in \mathds{C} : x \in \mathds{R} ,y > 0 \} \rightarrow \mathds{R} $ the Poisson transform of $ \nu_{j} $,
		\begin{align*}
			p_{\nu_{j}}(x + \mathrm{i} y) := y \int_{\mathds{R}} \frac{\mathrm{d} \nu_{j}(t)}{(x - t)^{2} + y^{2}}, \quad x \in \mathds{R}, ~ y > 0, ~ j=1,2.
		\end{align*}
		It follows from ($ \ast \ast $) that
		\begin{equation*}
			p_{\nu_{2}}(x + \mathrm{i} y) = 0 \quad \text{for all } x \in \mathds{R}, ~ y > 0.
		\end{equation*}
		Furthermore, since $ \nu_{1} $ is not the trivial measure, one has
			\begin{equation*}
				p_{\nu_{1}}(x + \mathrm{i} y) > 0 \quad \text{for all } x \in \mathds{R}, ~ y > 0.
			\end{equation*}
		Now \cite[Proposition 2.2]{Jaksic_Last} implies that
		\begin{equation*}
			0 = \lim_{y \searrow 0} \frac{p_{\nu_{2}}(x + \mathrm{i} y)}{p_{\nu_{1}}(x + \mathrm{i} y)} = \tilde{f}(x) \quad \text{for } \bbmu \text{-almost all } x \leq \lambda. 
		\end{equation*}
		Hence $ \tilde{f}(x) = 0 $ for $ \bbmu $-almost all $ x \in \mathds{R} $. 
		We conclude that $ \left( \mathrm{Ran} ~ P_{\lambda} \right) \cap \left( \mathrm{Ran} ~ Q_{\lambda} \right) $ is trivial. This finishes the proof.
	\end{proof}
	
	Analogously, one shows that the following lemma holds true.	\newpage
	\begin{lemma} \label{Main theorem II part II}
		Let $ \lambda \in \mathds{R} \setminus \{ \min \sigma_{\mathrm{ess}}(T) \} $. 
		Then one has that the dimension of $ \left( \mathrm{Ker} ~ P_{\lambda} \right) \cap \left( \mathrm{Ker} ~ Q_{\lambda} \right) $ is
		\begin{enumerate}
			\item infinite if and only if $ \lambda < \min \sigma_{\mathrm{ess}}(T) $.
			\item zero if and only if $ \lambda > \min \sigma_{\mathrm{ess}}(T) $.
		\end{enumerate}
	\end{lemma}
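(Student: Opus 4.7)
The plan is to mirror the proof of Lemma \ref{Main theorem II part I} step by step under the reflection swapping $\min\leftrightarrow\max$ and $\mathrm{Ran}\leftrightarrow\mathrm{Ker}$. As in Part I, the starting observation is that $\mathrm{supp}\,\bbmu_\alpha=\sigma(T+S_\alpha)$, and since $T+S_\alpha$ differs from $T$ by a rank one perturbation they share the same essential spectrum; therefore the cardinality of $(-\infty,\lambda)\cap\mathrm{supp}\,\bbmu_\alpha$ is finite when $\lambda<\min\sigma_{\mathrm{ess}}(T)$ and infinite when $\lambda>\min\sigma_{\mathrm{ess}}(T)$. These two cases match the two conclusions of the lemma and will be treated separately.

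In the first case $\lambda<\min\sigma_{\mathrm{ess}}(T)$, I would simply note that $\mathrm{Ran}\,E_{(-\infty,\lambda)}(T)$ and $\mathrm{Ran}\,E_{(-\infty,\lambda)}(T+S_\alpha)$ are both finite dimensional, since the spectrum of each operator below $\min\sigma_{\mathrm{ess}}(T)$ consists of discrete eigenvalues of finite multiplicity. Consequently $\mathrm{Ker}\,P_\lambda$ and $\mathrm{Ker}\,Q_\lambda$ have finite codimension in $\mathfrak{H}$, hence so does their intersection, which is therefore infinite dimensional.

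The substance of the proof lies in the second case $\lambda>\min\sigma_{\mathrm{ess}}(T)$. Take any $f\in(\mathrm{Ker}\,P_\lambda)\cap(\mathrm{Ker}\,Q_\lambda)$ and choose a representative $\tilde f$ with $\tilde f(x)=0$ for all $x<\lambda$. Fix $r\in\bigl(0,(\lambda-\min\sigma_{\mathrm{ess}}(T))/3\bigr)$ and, by mollification as in the previous proof, approximate $\tilde f$ in $L^2(\bbmu)$ by continuously differentiable, compactly supported functions $\tilde f_n$ that still vanish on $(-\infty,\lambda-r]$. Inserting $\tilde f_n$ into formula (\ref{Der unitaere Operator von Treil_und_Liaw}) and using $\tilde f_n(x)=0$ for $x\leq\lambda-2r$ yields
\begin{equation*}
(V_\alpha\tilde f_n)(x)=\alpha\int_{(\lambda-r,\infty)}\frac{\tilde f_n(t)}{x-t}\,\mathrm{d}\bbmu(t)\quad\text{for all }x\leq\lambda-2r,
\end{equation*}
and the Cauchy-type operator $L^2(\mathds{1}_{(\lambda-r,\infty)}\,\mathrm{d}\bbmu)\to L^2(\mathds{1}_{(-\infty,\lambda-2r]}\,\mathrm{d}\bbmu_\alpha)$ is bounded with norm at most $1/r$. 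Passing to the limit in $n$ and then letting $r\searrow 0$ gives
\begin{equation*}
\int_{[\lambda,\infty)}\frac{\tilde f(t)}{x-t}\,\mathrm{d}\bbmu(t)=0\quad\text{for }\bbmu_\alpha\text{-a.e.\ }x<\lambda.
\end{equation*}
Because $\lambda>\min\sigma_{\mathrm{ess}}(T)$, the set of such $x$ has infinitely many points of $\mathrm{supp}\,\bbmu_\alpha$, so the identity theorem applied on $\mathds{C}\setminus[\lambda,\infty)$ forces the Cauchy transform to vanish throughout its domain. Taking imaginary parts and running the Poisson transform argument of Lemma \ref{Main theorem II part I} verbatim, with $\mathds{1}_{[\lambda,\infty)}\mathrm{d}\bbmu$ in place of $\mathds{1}_{(-\infty,\lambda]}\mathrm{d}\bbmu$, and then invoking \cite[Proposition 2.2]{Jaksic_Last}, yields $\tilde f=0$ $\bbmu$-a.e.\ on $[\lambda,\infty)$; combined with $\tilde f\equiv 0$ on $(-\infty,\lambda)$ this gives $f=0$ in $L^2(\bbmu)$.

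The main obstacle is purely bookkeeping: I must get the direction of the vanishing region correct (below $\lambda$ rather than above), make sure the associated bounded Cauchy-type operator $B$ and its norm estimate still give $\leq 1/r$ in the reflected geometry, and verify that the analytic continuation and Poisson-transform step work with the measures restricted to $[\lambda,\infty)$ instead of $(-\infty,\lambda]$. No new idea beyond Lemma \ref{Main theorem II part I} is required.
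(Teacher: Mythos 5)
Your proposal is correct and is essentially the paper's own proof: the paper disposes of Lemma \ref{Main theorem II part II} with the single line ``Analogously, one shows that the following lemma holds true,'' i.e.\ precisely the reflection of the argument for Lemma \ref{Main theorem II part I} that you carry out (finite rank of $E_{(-\infty,\lambda)}$ below $\min\sigma_{\mathrm{ess}}(T)$ in one direction; the mollification, the $1/r$-bounded Cauchy-type operator, the identity theorem on $\mathds{C}\setminus[\lambda,\infty)$, and the Poisson-transform/Poltoratskii step with the measures restricted to $[\lambda,\infty)$ in the other). The only cosmetic omission is the trivial edge case $\lambda\geq\max\sigma(T)$ or $\lambda\geq\max\sigma(T+S_{\alpha})$, mirroring the paper's explicit treatment of $\lambda\leq\min\sigma(T)$ in Part I, where the intersection is zero outright.
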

	
	\begin{remark}
		The proof of Lemma \ref{Main theorem II part I} does not work if $ T $ is unbounded. 
		To see this, consider the case where the essential spectrum of $ T $ is bounded from above 
		and $ T $ has infinitely many isolated eigenvalues greater than $ \max \sigma_{\mathrm{ess}}(T) $.
	\end{remark}
	
	\begin{proof}[Proof of Theorem \ref{Main theorem II}]
		Taken together, Lemmas \ref{Main theorem II part I} and \ref{Main theorem II part II} imply Theorem \ref{Main theorem II}.
	\end{proof}

	\section{On non-invertibility of $ D(\lambda) $} \label{The section with finite rank perturbation}
	
	In this section, the self-adjoint operator $ T $ is assumed to be bounded. The main purpose of this section is to establish the following theorem.
	\begin{theorem}	\label{Main theorem III}
		Let $ S : \mathfrak{H} \rightarrow \mathfrak{H} $ be a compact self-adjoint operator. 
		Then the following assertions hold true.
		\begin{enumerate}
			\item If $ \lambda \in \mathds{R} \setminus \sigma_{\mathrm{ess}}(T) $, then $ D(\lambda) $ is a compact operator. 
						In particular, zero belongs to the essential spectrum of $ D(\lambda) $.
			\item Zero belongs to the essential spectrum of $ D(\lambda) $ for all but at most countably many $ \lambda $ in $ \sigma_{\mathrm{ess}}(T) $.	\label{part two}
		\end{enumerate}
	\end{theorem}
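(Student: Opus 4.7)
The proof splits into two assertions of very different difficulty; I would address each in turn.

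For (1), the plan is a functional-calculus cutoff. By Weyl's theorem on compact perturbations, $\sigma_{\mathrm{ess}}(T+S)=\sigma_{\mathrm{ess}}(T)$. If $\lambda\notin\sigma_{\mathrm{ess}}(T)$, pick $\epsilon>0$ so that $[\lambda-\epsilon,\lambda+\epsilon]\cap\sigma_{\mathrm{ess}}(T)=\emptyset$; this interval then meets $\sigma(T)$ and $\sigma(T+S)$ only in finitely many eigenvalues of finite multiplicity. Choose $f\in C(\mathds{R};[0,1])$ with $f\equiv 1$ on $(-\infty,\lambda-\epsilon]$ and $f\equiv 0$ on $[\lambda,\infty)$ and split
\begin{equation*}
D(\lambda)=\bigl[\mathds{1}_{(-\infty,\lambda)}(T+S)-f(T+S)\bigr]-\bigl[\mathds{1}_{(-\infty,\lambda)}(T)-f(T)\bigr]+\bigl[f(T+S)-f(T)\bigr].
\end{equation*}
The two bracketed differences are finite rank, since their ranges lie in the finite-dimensional spectral subspaces of $T+S$ and $T$ attached to $[\lambda-\epsilon,\lambda]$, while $f(T+S)-f(T)$ is compact by uniform polynomial approximation of $f$ on $\sigma(T)\cup\sigma(T+S)$ together with the standard fact that $p(T+S)-p(T)$ is compact for every polynomial $p$ when $S$ is compact. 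Thus $D(\lambda)$ is compact, and since $\mathfrak{H}$ is infinite-dimensional this forces $0\in\sigma_{\mathrm{ess}}(D(\lambda))$.

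For (2), the plan is to build a Weyl sequence for $D(\lambda)$ at $0$ for every $\lambda\in\sigma_{\mathrm{ess}}(T)$ outside a countable exceptional set $\mathcal{E}$. A natural candidate is $\mathcal{E}:=\sigma_{\mathrm{p}}(T)\cup\sigma_{\mathrm{p}}(T+S)\cup\{$endpoints of the connected components of $\sigma(T)\cup\sigma(T+S)\}$, all of which are countable. For $\lambda\in\sigma_{\mathrm{ess}}(T)\setminus\mathcal{E}$, the spectral subspaces $\mathrm{Ran}\,E_{(\lambda-1/n,\lambda)}(T)$ are infinite-dimensional, so I can extract pairwise orthogonal unit vectors $x_n$ from them. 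Then $E_{(-\infty,\lambda)}(T)x_n=x_n$ and $\|(T-\lambda)x_n\|\leq 1/n$, while compactness of $S$ on the orthonormal sequence forces $\|Sx_n\|\to 0$, whence $\|(T+S-\lambda)x_n\|\to 0$.

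The main obstacle—and the substantive content of (2)—is to upgrade two-sided spectral concentration of $x_n$ at $\lambda$ with respect to $T+S$ into one-sided concentration on $(-\infty,\lambda)$, that is, $\|E_{[\lambda,\infty)}(T+S)x_n\|\to 0$; this yields $D(\lambda)x_n\to 0$. My plan is to refine $x_n$ to sit in $\mathrm{Ran}\,E_{(\lambda-\delta_n,\lambda-\eta_n)}(T)$ for carefully chosen $\delta_n>\eta_n>0$ tending to zero in tandem with the (unknown) decay rate of $\|Sx_n\|$, so that $\langle(T+S-\lambda)x_n,x_n\rangle$ stays quantitatively negative relative to the standard deviation furnished by $\|(T+S-\lambda)x_n\|$. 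Using the absence of an atom of $E(\cdot;T+S)$ at $\lambda$ (since $\lambda\notin\sigma_{\mathrm{p}}(T+S)$), a Cantelli-type one-sided deviation estimate on the probability measure $\mu\mapsto\langle E_\mu(T+S)x_n,x_n\rangle$ should then give $\mu^{T+S}_{x_n}([\lambda,\infty))\to 0$ and complete the construction. The delicate matching of the three scales $\delta_n$, $\eta_n$ and $\|Sx_n\|$ is where the bulk of the technical work lies; any $\lambda$ for which no workable matching exists (for instance where only right-sided approximation from $\sigma(T+S)$ to $\lambda$ is available) is absorbed into a countable enlargement of $\mathcal{E}$, which is precisely the origin of the exceptional set in the statement.
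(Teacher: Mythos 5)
Your argument for assertion (1) is correct and is a sound elementary alternative to the paper's route: the paper instead invokes double operator integral estimates (Birman--Solomyak, Peller) to conclude that $D(\lambda)$ is even of trace class off a slightly larger set, whereas your cutoff decomposition together with the compactness of $p(T+S)-p(T)$ for polynomials $p$ (using that $T$ is bounded in this section) gives compactness directly, which is all that is needed.

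For assertion (2) there is a genuine gap, and it sits exactly where you yourself locate ``the substantive content'': you never prove that the set of $\lambda$ for which your construction fails is countable --- you simply decree that such $\lambda$ are ``absorbed into a countable enlargement of $\mathcal{E}$''. Since the countability of the exceptional set \emph{is} the theorem, this is circular. Two concrete problems. First, your initial $\mathcal{E}$ need not be countable: for a Cantor-type spectrum (e.g.\ the almost Mathieu operator with irrational frequency, which the paper explicitly treats), every connected component of $\sigma(T)$ is a singleton, so ``endpoints of connected components of $\sigma(T)$'' is all of $\sigma(T)$. Second, and more seriously, the Cantelli step imposes a quantitative constraint you do not address: if $x_n\in\mathrm{Ran}\,E_{(\lambda-\delta_n,\lambda-\eta_n)}(T)$, the one-sided Chebyshev inequality gives roughly $\|E_{[\lambda,\infty)}(T+S)x_n\|^{2}\lesssim\big(\delta_n-\eta_n+\|Sx_n\|\big)^{2}\big/\big(\eta_n-\|Sx_n\|\big)^{2}$, so you need the window \emph{width} $\delta_n-\eta_n$ to be $o(\eta_n)$ while the window still carries an infinite-dimensional spectral subspace (so that compactness of $S$ lets you choose $\|Sx_n\|=o(\eta_n)$ and keep $(x_n)$ orthonormal). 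For which $\lambda$ such windows exist, and why the failures form a countable set, is left entirely open. (The mechanism can in fact be rescued by anchoring the windows at a fixed $\nu\in\sigma_{\mathrm{ess}}(T)$ with $\nu\neq\lambda$ instead of letting them slide up to $\lambda$, but that is a different argument from the one you describe.) The paper avoids all of this by a different device: it chooses $x_n$ orthogonal to $T^{j}\varphi_k$ for $j\le n-1$, so that $p(T+S)x_n=p(T)x_n$ for polynomials of low degree, extracts weak limits $\bbnu,\tilde{\bbnu}$ of the spectral measures of the $x_n$ via Prohorov's theorem, takes the exceptional set to be the (automatically countable) set of atoms of $\bbnu$ and $\tilde{\bbnu}$, and closes the estimate with the Portmanteau theorem and polynomial approximation of a trapezoidal cutoff.
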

	Note that we cannot exclude the case that the exceptional set is dense in $ \sigma_{\mathrm{ess}}(T) $.
	
	\begin{remark}
	Mart{\'{\i}}nez-Avenda{\~n}o and Treil have shown ``that given any compact subset of the complex plane containing zero, 
	there exists a Hankel operator having this set as its spectrum'' (see \cite[p.\,83]{Treil_Martinez}).
	Thus, Theorem \ref{Main theorem III} and \cite[Theorem 1.1]{Treil_Martinez} lead to the following result:
	
	\textit{for all $ \lambda $ in $ \mathds{R} $ except for at most countably many $ \lambda $ in $ \sigma_{\mathrm{ess}}(T) $, there exists a Hankel operator $ \Gamma_{\lambda} $ 
	such that $ \sigma(\Gamma_{\lambda}) = \sigma \big( D(\lambda) \big) $.}
	\end{remark}
		
	First, we will prove Theorem \ref{Main theorem III} in the case when the range of $ S $ is finite dimensional. If $ S $ is compact and the range of $ S $ is infinite dimensional, 
	then the proof has to be modified.
		
	\subsection{The case when the range of $ S $ is finite dimensional}
	
	Throughout this subsection, we consider a self-adjoint finite rank operator 
	\begin{equation*}
		S = \sum_{j=1}^{N} \alpha_{j} \langle \cdot, \varphi_{j} \rangle \varphi_{j} : \mathfrak{H} \rightarrow \mathfrak{H}, \quad N \in \mathds{N},
	\end{equation*}
	where $ \varphi_{1}, ..., \varphi_{N} $ form an orthonormal system in $ \mathfrak{H} $ and $ \alpha_{1}, ..., \alpha_{N} $ are nonzero real numbers.
	
	Note that if there exists $ \lambda_{0} $ in $ \mathds{R} $ such that	
	\begin{equation*}
		\mathrm{dim} ~ \mathrm{Ran} ~ E_{ \{ \lambda_{0} \} }(T) = \infty \quad \text{or} \quad \mathrm{dim} ~ \mathrm{Ran} ~ E_{ \{ \lambda_{0} \} }(T+S) = \infty,
	\end{equation*}
	then $ \mathrm{dim} ~ \mathrm{Ker} ~ D(\lambda) = \infty $ 
	(see the proof of Proposition \ref{Einige hinreichende Bedingungen} (\ref{EW mit unendlicher Vielfachheit}) above) 
	and hence $ 0 \in \sigma_{\mathrm{ess}} \big( D(\lambda) \big) $ for all $ \lambda \in \mathds{R} $.  
		
	Define the sets $ \mathcal{M}(X) $, $ \mathcal{M}_{-}(X) $, and $ \mathcal{M}_{+}(X) $ by
	\begin{align*}
		&\mathcal{M}(X) := \{ \lambda \in \sigma_{\mathrm{ess}}(X) : \text{there exist } \lambda_{k}^{\pm} \text{ in } \sigma(X) \text{ such that } 
										\lambda_{k}^{-} \nearrow \lambda, ~ \lambda_{k}^{+} \searrow \lambda \}, \\
		&\mathcal{M}_{-}(X) 
			:= \{ \lambda \in \sigma_{\mathrm{ess}}(X) : \text{there exist } \lambda_{k}^{-} \text{ in } \sigma(X) \text{ such that } \lambda_{k}^{-} \nearrow \lambda \} 
				\setminus \mathcal{M}(X), \\
		&\mathcal{M}_{+}(X) 
			:= \{ \lambda \in \sigma_{\mathrm{ess}}(X) : \text{there exist } \lambda_{k}^{+} \text{ in } \sigma(X) \text{ such that } \lambda_{k}^{+} \searrow \lambda \} 
				\setminus \mathcal{M}(X),
	\end{align*}
	where $ X = T $ or $ X = T+S $. The following well-known result shows that these sets do not depend on whether $ X = T $ or $ X = T + S $. 
	\begin{lemma}[see {\cite[Proposition 2.1]{Arazy_Zelenko}}; see also {\cite[p.\,83]{Behncke}}]
		Let $ A $ and $ B $ be bounded self-adjoint operators acting on $ \mathfrak{H} $. 
		If $ N := \mathrm{dim} ~ \mathrm{Ran} ~ B $ is in $ \mathds{N} $ and $ \mathcal{I} \subset \mathds{R} $ is a nonempty interval contained in the resolvent set of $ A $, 
		then $ \mathcal{I} $ contains no more than $ N $ eigenvalues of the operator $ A + B $ (taking into account their multiplicities).
	\end{lemma}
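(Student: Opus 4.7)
The plan is to reduce the eigenvalue equation for $ A+B $ on the spectral gap $ \mathcal{I} $ to a finite-dimensional problem on $ \mathrm{Ran}~B $, in the spirit of Birman--Schwinger, and then exploit strict Loewner monotonicity of the compressed resolvent to bound the eigenvalue count by $ N $.

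First I would set up the finite-dimensional reduction. Since $ \mathcal{I} \subset \rho(A) $, the resolvent $ R_{0}(\mu) := (A-\mu)^{-1} $ is operator-analytic on $ \mathcal{I} $. Let $ P $ denote the orthogonal projection of $ \mathfrak{H} $ onto $ \mathrm{Ran}~B $, so that $ B = BP $ by self-adjointness, and write $ \widetilde{B} := B|_{\mathrm{Ran}~B} $, a self-adjoint invertible operator on the $ N $-dimensional space $ \mathrm{Ran}~B $. Define the $ N \times N $ matrix-valued function
\begin{equation*}
G(\mu) := P R_{0}(\mu) P \big|_{\mathrm{Ran}~B}, \qquad \mu \in \mathcal{I},
\end{equation*}
and set $ H(\mu) := \widetilde{B}^{-1} + G(\mu) $. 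A short calculation shows that the map $ \psi \mapsto B\psi $ yields an isomorphism from the $ \mu $-eigenspace of $ A+B $ onto $ \ker H(\mu) \subset \mathrm{Ran}~B $, with inverse $ \xi \mapsto -R_{0}(\mu)\xi $; injectivity uses $ \mu \notin \sigma(A) $ essentially, since $ B\psi = 0 $ and $ (A+B)\psi = \mu\psi $ would force $ A\psi = \mu\psi $. Hence the multiplicity of every eigenvalue $ \mu \in \mathcal{I} $ of $ A+B $ coincides with $ \dim \ker H(\mu) $.

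The next step is to establish strict Loewner monotonicity of $ H $. Differentiating gives
\begin{equation*}
G'(\mu) = P R_{0}(\mu)^{2} P \big|_{\mathrm{Ran}~B} = \big( R_{0}(\mu) P \big)^{\ast} \big( R_{0}(\mu) P \big),
\end{equation*}
which is strictly positive on $ \mathrm{Ran}~B $ because $ R_{0}(\mu) $ is invertible; hence $ H $ is strictly Loewner-increasing on $ \mathcal{I} $. A standard consequence of the variational characterization of eigenvalues of self-adjoint matrices is that each of the $ N $ eigenvalues $ h_{1}(\mu) \leq \cdots \leq h_{N}(\mu) $ of $ H(\mu) $ is then strictly increasing (and real-analytic) in $ \mu $, and therefore vanishes for at most one value of $ \mu $ in $ \mathcal{I} $.

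Combining these steps, the total multiplicity of the eigenvalues of $ A+B $ in $ \mathcal{I} $ equals
\begin{equation*}
\sum_{\mu \in \mathcal{I}} \dim \ker H(\mu) = \sum_{i=1}^{N} \# \{ \mu \in \mathcal{I} : h_{i}(\mu) = 0 \} \leq N,
\end{equation*}
which is the asserted bound. The main point requiring care is the isomorphism in Step~1, including the matching of multiplicities and the essential use of $ \mathcal{I} \subset \rho(A) $; once this is in place, the Loewner monotonicity and the counting are routine. An alternative route would go through Weyl-type min-max inequalities for finite-rank perturbations of self-adjoint operators, but the Birman--Schwinger route has the advantage of delivering the sharp constant $ N $ directly rather than the naive bound $ 2N $ one gets from applying the min-max inequality at the two endpoints of $ \mathcal{I} $.
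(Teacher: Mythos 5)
Your argument is correct, but it cannot be compared with a proof in the paper because the paper offers none: this lemma is quoted verbatim from the literature (Arazy--Zelenko, Proposition 2.1; see also Behncke) and used as a black box. Your Birman--Schwinger reduction is a legitimate, self-contained alternative: the isomorphism $\psi \mapsto B\psi$ between the $\mu$-eigenspace of $A+B$ and $\ker H(\mu)$ checks out (injectivity because $B\psi=0$ would force $\mu\in\sigma(A)$, surjectivity via $\xi\mapsto -R_{0}(\mu)\xi$), the identity $G'(\mu)=\bigl(R_{0}(\mu)P\bigr)^{\ast}\bigl(R_{0}(\mu)P\bigr)$ gives strict positivity of $H'(\mu)$ on $\mathrm{Ran}~B$ since $R_{0}(\mu)$ is injective, and the counting argument is sound. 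One small inaccuracy: the \emph{ordered} eigenvalue branches $h_{1}(\mu)\leq\cdots\leq h_{N}(\mu)$ need not be real-analytic (analytic labelling may disagree with the ordering at crossings); but they are continuous and, by the min-max principle applied to $H(\mu_{2})-H(\mu_{1})>0$, strictly increasing on the connected set $\mathcal{I}$, which is all you use. For contrast, the classical proofs cited by the paper run a more elementary variational dimension count: assuming $N+1$ eigenvalues in $\mathcal{I}$, one picks a normalized vector $\psi$ in the span of the corresponding eigenvectors with $B\psi=0$ and derives a contradiction between $\|(A-\mu)\psi\|=\|(A+B-\mu)\psi\|\leq \tfrac{1}{2}|\mathcal{I}|$ and $\mathrm{dist}(\mu,\sigma(A))$ for $\mu$ the midpoint of $\mathcal{I}$ --- the same device the author uses to prove Lemma \ref{Lemma zur Bedingung C3}. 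That route is shorter; yours has the advantage of exhibiting the eigenvalues in the gap as zeros of $N$ monotone branches, which gives finer information than the mere count.
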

	
	In view of this lemma and the fact that the essential spectrum is invariant under compact perturbations, we will write $ \mathcal{M} $ instead of $ \mathcal{M}(X) $, 
	$ \mathcal{M}_{+} $ instead of $ \mathcal{M}_{+}(X) $, and $ \mathcal{M}_{-} $ instead of $ \mathcal{M}_{-}(X) $, where $ X = T $ or $ X = T+S $.
	
	\begin{lemma} \label{Trace Class Ergebnis}
		Let $ \lambda \in \mathds{R} \setminus ( \mathcal{M} \cup \mathcal{M}_{-} ) $. Then $ D(\lambda) $ is a trace class operator. 
	\end{lemma}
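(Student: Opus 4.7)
The plan is to reduce $D(\lambda)$ to a Riesz-type contour integral of the resolvent difference
$$ (z-T-S)^{-1} - (z-T)^{-1} = (z-T-S)^{-1} S (z-T)^{-1}, $$
exploiting the gap in $\sigma(T)$ immediately to the left of $\lambda$ that the hypothesis provides.

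First I would unpack the hypothesis to produce a suitable auxiliary point $\mu < \lambda$. Since $\lambda \notin \mathcal{M} \cup \mathcal{M}_-$, no sequence in $\sigma(T)$ converges to $\lambda$ strictly from below: this is immediate from the definitions when $\lambda \in \sigma_{\mathrm{ess}}(T)$, and holds because the discrete spectrum cannot accumulate outside $\sigma_{\mathrm{ess}}(T)$ when $\lambda \notin \sigma_{\mathrm{ess}}(T)$. Hence there is $\epsilon > 0$ with $(\lambda - \epsilon, \lambda) \cap \sigma(T) = \emptyset$. The Arazy-Zelenko lemma stated above, applied to this resolvent interval of $T$, then guarantees that $\sigma(T+S)$ contains at most $N$ points in $(\lambda - \epsilon, \lambda)$, all isolated eigenvalues of $T+S$ of finite multiplicity (since $\sigma_{\mathrm{ess}}(T+S) = \sigma_{\mathrm{ess}}(T)$ by Weyl's theorem). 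I then select $\mu \in (\lambda - \epsilon, \lambda) \cap \rho(T) \cap \rho(T+S)$ lying above all those eigenvalues; only finitely many points are forbidden, so this is always possible.

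By the choice of $\mu$, both $\sigma(T) \cap [\mu, \lambda)$ and $\sigma(T+S) \cap [\mu, \lambda)$ are empty, which gives $D(\lambda) = E_{(-\infty, \mu)}(T+S) - E_{(-\infty, \mu)}(T)$. Since $T$ and $T+S$ are bounded and $\mu$ lies in both resolvent sets, I can choose a positively oriented closed Jordan curve $\Gamma \subset \mathds{C} \setminus (\sigma(T) \cup \sigma(T+S))$ enclosing $\sigma(T) \cap (-\infty, \mu)$ and $\sigma(T+S) \cap (-\infty, \mu)$. The Riesz functional calculus and the second resolvent identity then yield
$$ D(\lambda) = \frac{1}{2 \pi \mathrm{i}} \oint_{\Gamma} (z - T - S)^{-1} \, S \, (z - T)^{-1} \, \mathrm{d}z. $$
Expanding $S = \sum_{j=1}^{N} \alpha_j \langle \cdot, \varphi_j \rangle \varphi_j$ writes the integrand as a finite sum of rank-one operators $\alpha_j \langle \cdot , (\bar{z} - T)^{-1} \varphi_j \rangle (z - T - S)^{-1} \varphi_j$, each of trace norm at most $|\alpha_j| \, \| (z - T)^{-1} \| \, \| (z - T - S)^{-1} \|$. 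On the compact contour $\Gamma$ these resolvent norms are uniformly bounded, so integrating over $\Gamma$ produces an operator of finite trace norm.

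The main obstacle is the first step: the bookkeeping that converts the hypothesis $\lambda \notin \mathcal{M} \cup \mathcal{M}_-$ into a genuine left-sided gap in $\sigma(T)$ and combines this with Arazy-Zelenko and Weyl's theorem to exclude only finitely many values of $\mu$. The subsequent contour argument is a routine application of the Riesz calculus and trace-ideal inequalities; it relies crucially on the boundedness of $T$ (as highlighted in the remark following Lemma \ref{Lemma zur Bedingung C3}), without which neither the contour nor the resolvents along it could be controlled uniformly.
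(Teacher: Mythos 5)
Your argument is correct, and it reaches the conclusion by a genuinely different route than the paper. The shared first step is the essential one: both proofs must convert the hypothesis $ \lambda \notin \mathcal{M} \cup \mathcal{M}_{-} $ into a point $ \mu < \lambda $ lying in $ \rho(T) \cap \rho(T+S) $ with $ [\mu, \lambda) $ free of both spectra, using the non-accumulation from below, Weyl's theorem, and the Arazy--Zelenko lemma exactly as you do; your bookkeeping here is accurate (in particular, the at most $ N $ eigenvalues of $ T+S $ in the gap form a finite set, so they cannot accumulate at $ \lambda $ and a suitable $ \mu $ exists). Where you diverge is in what you do with this gap. The paper writes $ D(\lambda) = \psi(T+S) - \psi(T) $ for a compactly supported $ C^{\infty} $ function $ \psi $ that equals $ \mathds{1}_{(-\infty,\lambda)} $ on both spectra, and then invokes the Birman--Solomyak/Peller machinery (double operator integrals, Besov-class results) to conclude that $ \psi(T+S) - \psi(T) $ is trace class for trace class $ S $. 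You instead represent both spectral projections as Riesz projections over a common contour through $ \mu $ and apply the second resolvent identity, so that trace-class membership follows from the elementary estimate $ \| (z-T-S)^{-1} S (z-T)^{-1} \|_{1} \leq \| S \|_{1} \, \| (z-T)^{-1} \| \, \| (z-T-S)^{-1} \| $ integrated over a compact contour. Your route is more self-contained and even yields an explicit trace-norm bound, and it works verbatim for any trace class $ S $, not only finite rank; the paper's route is shorter on the page because it outsources the analytic work to cited theorems, and that same machinery is what the paper leans on in the subsequent remark about $ \lambda \in \mathcal{M}_{-} $. One small presentational point: your contour should be chosen so that its interior meets $ \sigma(T) \cup \sigma(T+S) $ only in $ (-\infty, \mu) $ (e.g.\ a rectangle with right edge passing through $ \mu $), which is possible precisely because $ \mu $ is a common resolvent point and both operators are bounded --- you say this, but it is the one place where the boundedness of $ T $ enters irreplaceably, as you correctly note at the end.
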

	
	\begin{proof}
		There exists an infinitely differentiable function $ \psi : \mathds{R} \rightarrow \mathds{R} $ with compact support such that
		\begin{equation*}
			E_{(-\infty, \lambda)}(T+S) - E_{(-\infty, \lambda)}(T)
			= \psi (T+S) - \psi (T).
		\end{equation*}
		Combine \cite[p.\,156, Equation (8.3)]{Birman_Solomyak} with \cite[p.\,532]{Peller_II} and \cite[Theorem 2]{Peller_II}, and it follows that $ D(\lambda) $ is 
		a trace class operator. 
	\end{proof}
	An analogous proof shows that $ D(\lambda) $ is a trace class operator for $ \lambda $ in $ \mathcal{M}_{-} $, 
	provided that $ E_{\{ \lambda \}}(T+S) - E_{\{ \lambda \}}(T) $ is of trace class.
	
	\begin{proposition} \label{Null im Spektrum bis auf abz}
		One has $ 0 \in \sigma_{\mathrm{ess}} \big( D(\lambda) \big) $ for all but at most countably many $ \lambda \in \mathds{R} $. 
	\end{proposition}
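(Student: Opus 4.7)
The plan is to split $\lambda\in\mathds{R}$ by how the spectrum $\sigma(T)=\sigma(T+S)$ accumulates at $\lambda$, handling the ``generic'' values via Lemma \ref{Trace Class Ergebnis} and the two-sided accumulation set $\mathcal{M}$ via a direct Weyl-sequence argument. Two countability facts simplify the bookkeeping: each $\lambda\in\mathcal{M}_+\cup\mathcal{M}_-$ is an endpoint of a connected component of the open set $\mathds{R}\setminus\sigma(T)$, so $\mathcal{M}_+\cup\mathcal{M}_-$ is countable; and the isolated points of $\sigma_{\mathrm{ess}}(T)$ are eigenvalues of infinite multiplicity, of which there are at most countably many by separability of $\mathfrak{H}$ --- for such $\lambda$, Proposition \ref{Einige hinreichende Bedingungen}(\ref{EW mit unendlicher Vielfachheit}) already gives $\dim\ker D(\lambda)=\infty$.

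For $\lambda\in\mathds{R}\setminus(\mathcal{M}\cup\mathcal{M}_-)$, Lemma \ref{Trace Class Ergebnis} gives that $D(\lambda)$ is trace class, hence compact; a compact self-adjoint operator on the infinite-dimensional separable Hilbert space $\mathfrak{H}$ has $0$ in its essential spectrum (being either of finite rank with infinite-dimensional kernel, or having nonzero eigenvalues accumulating at $0$). Applying the same argument with $\mathds{1}_{(-\infty,\lambda)}$ replaced by $\mathds{1}_{(-\infty,\lambda]}$, and using that $E_{\{\lambda\}}(T+S)-E_{\{\lambda\}}(T)$ is trace class for all but the countably many $\lambda$ at which one of these spectral projections has infinite rank, likewise handles $\lambda\in\mathds{R}\setminus(\mathcal{M}\cup\mathcal{M}_+)$. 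Intersecting, $D(\lambda)$ is trace class for all $\lambda\in\mathds{R}\setminus\mathcal{M}$ apart from a countable set, yielding $0\in\sigma_{\mathrm{ess}}(D(\lambda))$ there.

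It remains to treat $\lambda\in\mathcal{M}$, where $D(\lambda)$ is generally non-compact. I construct an orthonormal sequence $(y_n)$ with $y_n\to 0$ weakly and $D(\lambda)y_n\to 0$ in norm, so that $0\in\sigma_{\mathrm{ess}}(D(\lambda))$ by Weyl's criterion. Pick $\mu_n\nearrow\lambda$ in $\sigma(T)$ and $\delta_n>0$ so that the intervals $I_n:=(\mu_n-\delta_n,\mu_n+\delta_n)$ are pairwise disjoint, contained in $(-\infty,\lambda)$, satisfy $\dim\mathrm{Ran}\,E_{I_n}(T)\geq N+1$, and obey $\delta_n/(\lambda-\mu_n)\to 0$. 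Since $\ker S=(\mathrm{Ran}\,S)^{\perp}$ has codimension $N$ in $\mathfrak{H}$, the intersection $\mathrm{Ran}\,E_{I_n}(T)\cap\ker S$ is non-trivial; pick a unit vector $y_n$ in it. Spectral disjointness of the $I_n$ makes $(y_n)$ orthonormal, so $y_n\to 0$ weakly. Then $Q_\lambda y_n=y_n$ and $(T+S)y_n=Ty_n$, giving $\|(T+S-\mu_n)y_n\|\leq\delta_n$, and the standard Chebyshev-type spectral estimate yields $\|E_{[\lambda,\infty)}(T+S)y_n\|\leq\delta_n/(\lambda-\mu_n)\to 0$. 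Hence $D(\lambda)y_n=P_\lambda y_n-y_n=-E_{[\lambda,\infty)}(T+S)y_n\to 0$.

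The main obstacle is producing such $\mu_n,\delta_n$ simultaneously. When nearby eigenvalues of $T$ all have multiplicity at most $N$ and accumulate geometrically at $\lambda$ (so any interval containing $N+1$ of them has width comparable to its distance to $\lambda$), one either falls into the hypotheses of Proposition \ref{Einige hinreichende Bedingungen}(2) or (3) --- in which case $\dim\ker D(\lambda)=\infty$ for every $\lambda$ and the conclusion is immediate --- or one refines $y_n$ to a linear combination of individual eigenvectors corrected by projection onto $\ker S$, exploiting the Bessel-type bound $\sum_n|\langle e_n,\varphi_k\rangle|^2\leq 1$ to extract a subsequence along which the Weyl property still holds.
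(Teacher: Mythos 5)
Your route is genuinely different from the paper's: the paper makes no case distinction on how $\sigma(T)$ accumulates at $\lambda$, but instead takes vectors $x_n$ orthogonal to the Krylov system $\{T^j\varphi_k\}_{j\leq n-1}$, extracts weak limits $\bbnu,\tilde{\bbnu}$ of the associated spectral measures via Prohorov, and gets the countable exceptional set as the set of atoms of those limit measures (via the Portmanteau theorem and polynomial approximation). Your first two paragraphs and your ``Case A'' Weyl sequence (unit vectors in $\mathrm{Ran}\,E_{I_n}(T)\cap\ker S$ with $\delta_n=o(\lambda-\mu_n)$, followed by the Chebyshev estimate) are correct, and where they apply they give a more explicit exceptional set than the paper does.

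The gap is in your last paragraph. The trichotomy you offer for the hard case is not exhaustive: take $T$ with simple eigenvalues $\lambda\pm 2^{-j}$ and pure point spectrum near $\lambda$, and $N=1$. Then $\lambda\in\mathcal{M}$, every interval capturing $N+1=2$ eigenvalues has width comparable to its distance from $\lambda$, all multiplicities are $1\leq N$, and $\mathfrak{E}^{\bot}$ is trivial near $\lambda$ --- so neither case (2) nor case (3) of Proposition \ref{Einige hinreichende Bedingungen} applies. Your remaining fallback does not close this: Bessel gives only $\|S e_n\|\to 0$ for eigenvectors $e_n$ of $T$, whereas the Chebyshev step needs $\|(T+S-\mu_n)e_n\|=\|Se_n\|=o(|\lambda-\mu_n|)$, and no subsequence extraction produces that in general; taking combinations of $N+1$ eigenvectors projected into $\ker S$ fails for the same reason (the spectral spread of the combination is already of order $|\lambda-\mu_n|$). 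The good news is that the gap is repairable within your framework: if the Case A data $(\mu_n,\delta_n)$ cannot be produced from either side of $\lambda$, then every point of $\sigma(T)$ in a punctured neighborhood of $\lambda$ is an isolated eigenvalue of multiplicity at most $N$, hence $\lambda$ is an isolated point of $\sigma_{\mathrm{ess}}(T)$; there are at most countably many such $\lambda$, and they may simply be absorbed into the exceptional set instead of being handled. With that observation in place of your final paragraph, your proof is complete and independent of the paper's.
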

	
	In the proof of Proposition \ref{Null im Spektrum bis auf abz}, we will use the notion of weak convergence for sequences of probability measures.
	\begin{definition}
		Let $ \mathcal{E} $ be a metric space. 
		A sequence $ \bbnu_{1}, \bbnu_{2}, ... $ of Borel probability measures on $ \mathcal{E} $ is said to converge \emph{weakly} to 
		a Borel probability measure $ \bbnu $ on $ \mathcal{E} $ if
		\begin{align*}
			\lim_{n \rightarrow \infty} \int f \mathrm{d} \bbnu_{n} 
			= \int f \mathrm{d} \bbnu \quad \text{for every bounded continuous function } f : \mathcal{E} \rightarrow \mathds{R}.
		\end{align*}
		If $ \bbnu_{1}, \bbnu_{2}, ... $ converges weakly to $ \bbnu $, then we shall write $ \bbnu_{n} \stackrel{w}{\rightarrow} \bbnu $, $ n \rightarrow \infty $.
	\end{definition}

	\begin{proof}[Proof of Proposition \ref{Null im Spektrum bis auf abz}]
		First, we note that if $ \lambda < \min \big( \sigma(T) \cup \sigma(T+S) \big) $ or $ \lambda > \max \big( \sigma(T) \cup \sigma(T+S) \big) $, then 
		$ D(\lambda) $ is the zero operator, and there is nothing to show. So let us henceforth assume
		that $ \lambda \geq \min \big( \sigma(T) \cup \sigma(T+S) \big) $ and $ \lambda \leq \max \big( \sigma(T) \cup \sigma(T+S) \big) $.	
		
		The idea of the proof is to apply Weyl's criterion (see, e.\,g., \cite[Proposition 8.11]{Schmuedgen}) to a suitable sequence of normed vectors.		
		In this proof, we denote by $ \| g \|_{\infty, \, \mathcal{K}} $ the supremum norm of a function $ g : \mathcal{K} \rightarrow \mathds{R} $, where 
		$ \mathcal{K} $ is a compact subset of $ \mathds{R} $, and 
		by $ \| A \|_{\mathrm{op}} $ the usual operator norm of an operator $ A : \mathfrak{H} \rightarrow \mathfrak{H} $. 
	
		Choose a sequence $ (x_{n})_{n \in \mathds{N}} $ of normed vectors in $ \mathfrak{H} $ such that
		\begin{align*}
			&x_{1} ~ \bot ~ \left\{ \varphi_{k} : k = 1, ..., N  \right\}, ~ x_{2} ~ \bot ~ \left\{ x_{1}, \varphi_{k}, T \varphi_{k} : k = 1, ..., N \right\}, ~ ..., \\
			&x_{n} ~ \bot ~ \left\{ x_{1}, ..., x_{n-1}, T^{j} \varphi_{k} : j \in \mathds{N}_{0}, ~ j \leq n-1, ~ k = 1, ..., N \right\}, ~ ...
		\end{align*}
		Consider sequences of Borel probability measures $ (\bbnu_{n})_{n \in \mathds{N}} $ and $ (\tilde{\bbnu}_{n})_{n \in \mathds{N}} $ that are defined as follows:
		\begin{align*}
			\bbnu_{n}(\Omega) := \langle E_{\Omega}(T) x_{n}, x_{n} \rangle, \quad 
			\tilde{\bbnu}_{n}(\Omega) := \langle E_{\Omega}(T+S) x_{n}, x_{n} \rangle, \quad \Omega \in \mathcal{B}(\mathds{R}).
		\end{align*}
		It is easy to see that by Prohorov's theorem (see, e.\,g., \cite[Proposition 7.2.3]{Parthasarathy}), there exist 
		a subsequence of a subsequence of $ (x_{n})_{n \in \mathds{N}} $ and 
		Borel probability measures $ \bbnu $ and $ \tilde{\bbnu} $ with support contained in $ \sigma(T) $ and $ \sigma(T+S) $, respectively, such that
		\begin{equation*}
			\bbnu_{n_{k}} \stackrel{w}{\rightarrow} \bbnu ~ \text{ as } ~ k \rightarrow \infty \quad \text{and} \quad 
			\tilde{\bbnu}_{n_{k_{\ell}}} \stackrel{w}{\rightarrow} \tilde{\bbnu} ~ \text{ as } ~ \ell \rightarrow \infty.
		\end{equation*}

		Due to this observation, we consider the sequences $ \big( x_{n_{k_{\ell}}} \big)_{\ell \in \mathds{N}} $, $ \big( \bbnu_{n_{k_{\ell}}} \big)_{\ell \in \mathds{N}} $, 
		and $ \big( \tilde{\bbnu}_{n_{k_{\ell}}} \big)_{\ell \in \mathds{N}} $ 
		which will be denoted again by $ (x_{n})_{n \in \mathds{N}} $, $ (\bbnu_{n})_{n \in \mathds{N}} $, and $ (\tilde{\bbnu}_{n})_{n \in \mathds{N}} $.
		
		Put $ \mathcal{N}_{T} := \{ \mu \in \mathds{R} : \bbnu( \{ \mu \} ) > 0 \} $ and $ \mathcal{N}_{T+S} := \{ \mu \in \mathds{R} : \tilde{\bbnu}( \{ \mu \} ) > 0 \} $. 
		Then the set $ \mathcal{N}_{T} \cup \mathcal{N}_{T+S} $ is at most countable. 
		Consider the case where $ \lambda $ does not belong to $ \mathcal{N}_{T} \cup \mathcal{N}_{T+S} $. 
		Define $ \xi := \min \left\{ \min \sigma(T), ~ \min \sigma(T+S) \right\} - 1 $. 
		Consider the continuous functions $ f_{m} : \mathds{R} \rightarrow \mathds{R} $, $ m \in \mathds{N} $, that are defined by
		\begin{equation*}
			f_{m}(t) := \big( 1 + m (t - \xi) \big) \cdot \mathds{1}_{\left[ \xi - 1/m, ~ \xi \right]}(t)
										+ \mathds{1}_{\left( \xi, ~ \lambda \right)} (t)
										+ \big( 1 - m (t - \lambda) \big) \cdot \mathds{1}_{\left[ \lambda, \lambda + 1/m \right]}(t).
		\end{equation*}
		The figure below shows (qualitatively) the graph of $ f_{m} $.	
		\begin{figure}[ht]
			\begin{center}
				\begin{picture}(200,50)
					\put(0,0){\line(1,0){60}}
					\put(60,0){\line(1,4){10}}
					\put(70,40){\line(1,0){60}}
					\put(130,40){\line(1,-4){10}}
					\put(140,0){\line(1,0){60}}
				\end{picture}
				\caption{The graph of $ f_{m} $.}
			\end{center}
		\end{figure}
		
		For all $ m \in \mathds{N} $, choose polynomials $ p_{m, k} $ such that
		\begin{align} \label{Approximiere stetige Funktion durch Polynome}
			\| f_{m} - p_{m, k} \|_{\infty, \, \mathcal{K}} \rightarrow 0 \quad \text{as } k \rightarrow \infty,
		\end{align}	
		where $ \mathcal{K} := \big[ \! \min \big( \sigma(T) \cup \sigma(T+S) \big) - 10, \, \max \big( \sigma(T) \cup \sigma(T+S) \big) + 10 \big] $.
		
		By construction of $ (x_{n})_{n \in \mathds{N}} $, one has
		\begin{align} \label{Gleichheit Polynome}
			p_{m,k}(T+S) x_{n} = p_{m, k}(T) x_{n} \quad \text{for all } n > \text{degree of } p_{m, k }.
		\end{align}
		
		For all $ m \in \mathds{N} $, the function $ |\mathds{1}_{(-\infty, \lambda)} - f_{m}|^{2} $ is bounded, measurable, and continuous except for 
		a set of both $ \bbnu $-measure zero and $ \tilde{\bbnu} $-measure zero. 
		
		Now (\ref{Gleichheit Polynome}) and the Portmanteau theorem (see, e.\,g., \cite[Theorem 13.16 (i) and (iii)]{Klenke}) imply
		\begin{align*}
			&\limsup_{n \rightarrow \infty} \left\| \left( E_{(-\infty, \lambda)}(T+S) - E_{(-\infty, \lambda)}(T) \right) x_{n} \right\| \\
			&\leq \left( \int_{\mathds{R}} | \mathds{1}_{(-\infty, \lambda)}(t) - f_{m}(t) |^{2} \mathrm{d}\bbnu(t) \right)^{1/2} \\
			& \quad + \left( \int_{\mathds{R}} | \mathds{1}_{(-\infty, \lambda)}(s) - f_{m}(s) |^{2} \mathrm{d} \tilde{\bbnu}(s) \right)^{1/2} \\
			& \quad + \left\| f_{m}(T) - p_{m, k}(T) \right\|_{\mathrm{op}} \\
			& \quad + \left\| f_{m}(T+S) - p_{m, k}(T+S) \right\|_{\mathrm{op}}
		\end{align*}
		for all $ m \in \mathds{N} $ and all $ k \in \mathds{N} $. 
		First, we send $ k \rightarrow \infty $ and then we take the limit $ m \rightarrow \infty $. 
		As $ m \rightarrow \infty $, the sequence $ | \mathds{1}_{(-\infty, \lambda)} - f_{m} |^{2} $ converges to zero pointwise almost everywhere with respect 
		to both $ \bbnu $ and $ \tilde{\bbnu} $. 
		Now (\ref{Approximiere stetige Funktion durch Polynome}) and the dominated convergence theorem imply
		\begin{equation*}
			\lim_{n \rightarrow \infty} \left\| \left( E_{(-\infty, \lambda)}(T+S) - E_{(-\infty, \lambda)}(T) \right) x_{n} \right\| = 0.
		\end{equation*}
		Recall that $ (x_{n})_{n \in \mathds{N}} $ is an orthonormal sequence.
		Thus, an application of Weyl's criterion (see, e.\,g., \cite[Proposition 8.11]{Schmuedgen}) concludes the proof.
	\end{proof}

	\begin{remark}
		If $ T $ is unbounded, then the spectrum of $ T $ is unbounded, so that the proof of Proposition \ref{Null im Spektrum bis auf abz} does not work. 
		For instance, we used the compactness of the spectrum in order to uniformly approximate $ f_{m} $ by polynomials.
		
		Moreover, it is unclear whether an orthonormal sequence $ (x_{n})_{n \in \mathds{N}} $ as in the proof of Proposition \ref{Null im Spektrum bis auf abz} 
		can be found in the domain of $ T $.
	\end{remark}
	
	\subsection{The case when the range of $ S $ is infinite dimensional}
	
	In this subsection, we suppose that $ S $ is compact and the range of $ S $ is infinite dimensional. The following lemma is easily shown.
	\begin{lemma} \label{Compact Operator Ergebnis}
		Let $ \lambda \in \mathds{R} \setminus \sigma_{\mathrm{ess}}(T) $. Then $ D(\lambda) $ is compact. 
	\end{lemma}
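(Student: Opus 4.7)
The plan is to reduce the statement to a smooth functional calculus argument, exactly as in the proof of Lemma~\ref{Trace Class Ergebnis}, but asking only for compactness of $D(\lambda)$ instead of trace class.

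First, I would invoke Weyl's theorem on the invariance of the essential spectrum under compact perturbations: since $S$ is compact, $\sigma_{\mathrm{ess}}(T+S) = \sigma_{\mathrm{ess}}(T)$, so the hypothesis $\lambda \notin \sigma_{\mathrm{ess}}(T)$ gives $\lambda \notin \sigma_{\mathrm{ess}}(T+S)$ as well. In particular, in a sufficiently small interval $(\lambda - \varepsilon, \lambda + \varepsilon)$ both $\sigma(T)$ and $\sigma(T+S)$ consist only of finitely many isolated eigenvalues of finite multiplicity. Shrinking $\varepsilon$, I can choose $\delta \in (0,\varepsilon)$ small enough that
\begin{equation*}
    [\lambda - \delta, \lambda + \delta] \cap \bigl(\sigma(T) \cup \sigma(T+S)\bigr) \subset \{\lambda\}.
\end{equation*}

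Second, I would pick a smooth (in fact, any continuous) function $\psi : \mathds{R} \to [0,1]$ with compact support such that $\psi \equiv 1$ on $(-\infty, \lambda - \delta]$ and $\psi \equiv 0$ on $[\lambda, \infty)$. By the choice of $\delta$, one has $\psi(x) = \mathds{1}_{(-\infty, \lambda)}(x)$ for every $x \in \sigma(T) \cup \sigma(T+S)$: points of these spectra strictly less than $\lambda$ actually lie in $(-\infty, \lambda - \delta]$, while points $\geq \lambda$ (including possibly $\lambda$ itself, if it happens to be an eigenvalue) lie in $[\lambda, \infty)$. Consequently, the spectral theorem yields $E_{(-\infty, \lambda)}(T) = \psi(T)$ and $E_{(-\infty, \lambda)}(T+S) = \psi(T+S)$, so that
\begin{equation*}
    D(\lambda) = \psi(T+S) - \psi(T).
\end{equation*}

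Third, I would establish compactness of $\psi(T+S) - \psi(T)$ by a simple Weierstrass approximation. Pick polynomials $p_{n}$ with $\| \psi - p_{n} \|_{\infty, \mathcal{K}} \to 0$ on a compact interval $\mathcal{K}$ containing $\sigma(T) \cup \sigma(T+S)$. By the continuous functional calculus, $p_{n}(T) \to \psi(T)$ and $p_{n}(T+S) \to \psi(T+S)$ in operator norm. For each polynomial, the standard telescoping identity
\begin{equation*}
    (T+S)^{k} - T^{k} = \sum_{j=0}^{k-1} (T+S)^{j} \, S \, T^{k-1-j}
\end{equation*}
shows that $p_{n}(T+S) - p_{n}(T)$ is compact, as a finite sum of bounded multiples of the compact operator $S$. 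Hence $D(\lambda)$ is a norm limit of compact operators and therefore compact.

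The only point requiring any care at all is the construction of $\delta$ in the first step; everything else is a routine functional-calculus computation. I don't foresee a serious obstacle, which is consistent with the author's remark that the lemma is easily shown.
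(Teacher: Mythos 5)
Your argument is correct. The paper itself offers no proof of Lemma~\ref{Compact Operator Ergebnis} (it is dismissed as ``easily shown''), so there is nothing to match line by line; but your first two steps reproduce exactly the reduction the author uses to prove the finite-rank analogue, Lemma~\ref{Trace Class Ergebnis}, namely replacing $\mathds{1}_{(-\infty,\lambda)}$ by a continuous function $\psi$ that agrees with it on $\sigma(T)\cup\sigma(T+S)$, which is possible precisely because $\lambda\notin\sigma_{\mathrm{ess}}(T)=\sigma_{\mathrm{ess}}(T+S)$ and eigenvalues cannot accumulate at $\lambda$. Where the paper (for the trace-class statement) must then invoke the double operator integral machinery of Birman--Solomyak and Peller, you correctly observe that mere compactness follows from the elementary route: Weierstrass approximation on a compact interval containing both spectra, the telescoping identity showing $p(T+S)-p(T)$ is compact for every polynomial $p$, and closedness of the compacts under norm limits. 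This is a complete and appropriately lightweight argument for the weaker conclusion. One cosmetic slip: a function cannot both have compact support and be identically $1$ on $(-\infty,\lambda-\delta]$; since $T$ and $T+S$ are bounded in this section, you should simply require $\psi\equiv 1$ on $[m,\lambda-\delta]$ for some $m$ below $\min\bigl(\sigma(T)\cup\sigma(T+S)\bigr)$, or drop the support condition altogether --- only the values of $\psi$ on the two spectra matter for the functional calculus, so nothing in the proof is affected.
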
	
	Furthermore, Proposition \ref{Null im Spektrum bis auf abz} still holds when $ S $ is compact and the range of $ S $ is infinite dimensional. 
	To see this, we need to modify two steps of the proof of Proposition \ref{Null im Spektrum bis auf abz}. 
	
	Let us write $ S = \sum_{j=1}^{\infty} \alpha_{j} \langle \cdot, \varphi_{j} \rangle \varphi_{j} $, 
	where $ \varphi_{1}, \varphi_{2}, ... $ is an orthonormal system in $ \mathfrak{H} $ and $ \alpha_{1}, \alpha_{2}, ... $ are nonzero real numbers.
	
	(1) In contrast to the proof of Proposition \ref{Null im Spektrum bis auf abz} above, we choose an orthonormal sequence $ x_{1}, x_{2}, ... $ in $ \mathfrak{H} $ as follows:
	\begin{align*}
		&x_{1} ~ \bot ~ \varphi_{1}, ~ x_{2} ~ \bot ~ \{ x_{1}, \varphi_{1}, \varphi_{2}, T \varphi_{1}, T \varphi_{2} \}, ~ ..., \\
		&x_{n} ~ \bot ~ \{ x_{1}, ..., x_{n-1}, \varphi_{k}, T \varphi_{k}, ..., T^{n-1} \varphi_{k} : k = 1, ..., n \}, ~ ...
	\end{align*}
	By construction, one has
	\begin{equation*}
		p(T + F_{\ell}) x_{n} = p(T) x_{n} \quad \text{ for all } n > \max (\ell, ~ \text{degree of } p),
	\end{equation*}
	where $ p $ is a polynomial, $ \ell \in \mathds{N} $, and $ F_{\ell} := \sum_{j=1}^{\ell} \alpha_{j} \langle \cdot, \varphi_{j} \rangle \varphi_{j} $.
	
	(2) We continue as in the proof of Proposition \ref{Null im Spektrum bis auf abz} above and estimate as follows:
	\begin{align*}
			&\limsup_{n \rightarrow \infty} \left\| \left( E_{(-\infty, \lambda)}(T+S) - E_{(-\infty, \lambda)}(T) \right) x_{n} \right\| \\
			&\leq \left( \int_{\mathds{R}} | \mathds{1}_{(-\infty, \lambda)}(t) - f_{m}(t) |^{2} \mathrm{d}\bbnu(t) \right)^{1/2} \\
			& \quad + \left( \int_{\mathds{R}} | \mathds{1}_{(-\infty, \lambda)}(s) - f_{m}(s) |^{2} \mathrm{d} \tilde{\bbnu}(s) \right)^{1/2} \\
			& \quad + \left\| f_{m}(T) - p_{m, k}(T) \right\|_{\mathrm{op}} \\
			& \quad + \left\| f_{m}(T+S) - p_{m, k}(T+S) \right\|_{\mathrm{op}} \\
			& \quad + \left\| p_{m,k}(T+S) - p_{m, k}(T + F_{\ell}) \right\|_{\mathrm{op}} 
	\end{align*}
	for all $ k, \ell, m \in \mathds{N} $, where $ \| \cdot \|_{\mathrm{op}} $ denotes the operator norm. 
	It is well known that the operators $ F_{\ell} $ uniformly approximate the operator $ S $ as $ \ell $ tends to infinity. 
	Therefore, $ \left\| p_{m,k}(T+S) - p_{m, k}(T + F_{\ell}) \right\|_{\mathrm{op}} \rightarrow 0 $ as $ \ell \rightarrow \infty $. 
	
	Analogously to the proof of Proposition \ref{Null im Spektrum bis auf abz} above, it follows that
	\begin{equation*}
			\lim_{n \rightarrow \infty} \left\| \left( E_{(-\infty, \lambda)}(T+S) - E_{(-\infty, \lambda)}(T) \right) x_{n} \right\| = 0.
	\end{equation*}
	
	Hence, we have shown that zero belongs to the essential spectrum of $ D(\lambda) $ for all but at most countably many $ \lambda \in \mathds{R} $. 
	
	\subsection{Proof of Theorem \ref{Main theorem III}}
		Taken together, Lemma \ref{Trace Class Ergebnis} and Proposition \ref{Null im Spektrum bis auf abz} show that Theorem \ref{Main theorem III} holds whenever the range of $ S $ is 
		finite dimensional.
		
		In the preceding subsection, we have shown that Theorem \ref{Main theorem III} also holds when $ S $ is compact and the range of $ S $ is infinite dimensional.
		
		Now the proof is complete. \qed
	\subsection{The smooth situation}

	In order to apply a result of Pushnitski \cite{Pushnitski_I} to $ D(\lambda) $, we check the corresponding assumptions stated in \cite[p.\,228]{Pushnitski_I}.
	
	First, define the compact self-adjoint operator $ G := |S|^{\frac{1}{2}} : \mathfrak{H} \rightarrow \mathfrak{H} $ 
	and the bounded self-adjoint operator $ S_{0} := \mathrm{sign}(S) : \mathfrak{H} \rightarrow \mathfrak{H} $. 
	Obviously, one has $ S = G^{\ast} S_{0} G $. Define the operator-valued functions $ h_{0} $ and $ h $ on $ \mathds{R} $ by
	\begin{align*}
		h_{0}(\lambda) = G E_{(-\infty, \lambda)}(T) G^{\ast}, \quad h(\lambda) = G E_{(-\infty, \lambda)}(T+S) G^{\ast}, \quad \lambda \in \mathds{R}.
	\end{align*}
	In order to fulfill \cite[Hypothesis 1.1]{Pushnitski_I}, we need the following assumptions.
	\begin{hypothesis}
		Suppose that there exists an open interval $ \delta $ contained in the absolutely continuous spectrum of $ T $. 
		Next, we assume that the derivatives
		\begin{align*}
			\dot{h}_{0}(\lambda) = \frac{\mathrm{d}}{\mathrm{d} \lambda} h_{0}(\lambda) \quad \text{and} \quad \dot{h}(\lambda) = \frac{\mathrm{d}}{\mathrm{d} \lambda} h(\lambda)
		\end{align*}
		exist in operator norm for all $ \lambda \in \delta $, and that the maps $ \delta \ni \lambda \mapsto \dot{h}_{0}(\lambda) $ 
		and \linebreak $ \delta \ni \lambda \mapsto \dot{h}(\lambda) $ are H\"older continuous (with some positive exponent) in the operator norm.
	\end{hypothesis}

	Now \cite[Theorem 1.1]{Pushnitski_I} yields that for all $ \lambda \in \delta $, there exists a nonnegative real number $ a $ such that
	\begin{equation*}
		\sigma_{\mathrm{ess}} \big( D(\lambda) \big) = \left[ -a, a \right].
	\end{equation*}
	The number $ a $ depends on $ \lambda $ and can be expressed in terms of the scattering matrix for the pair $ T $, $ T + S $, see \cite[Formula (1.3)]{Pushnitski_I}.
	\begin{example}	\label{Kreinsches Beispiel zum Zweiten}
		Again, consider Kre{\u\i}n's example \cite[pp.\,622--624]{Krein}. That is, $ \mathfrak{H} = L^{2}(0,\infty) $, the initial operator $ T = A_{0} $ is 
		the integral operator from Example \ref{via Krein}, and $ S = \langle \cdot, \varphi \rangle \varphi $ with $ \varphi(x) = \mathrm{e}^{-x} $. 
		Put $ \delta = (0,1) $. ~Then Pushnitski has shown in \cite[Subsection 1.3]{Pushnitski_I} that, by \cite[Theorem 1.1]{Pushnitski_I}, one has 
		$ \sigma_{\mathrm{ess}} \big( D(\lambda) \big) = [-1, 1] $ for all $ 0 < \lambda < 1 $.
		
		In particular, the operator $ D(\lambda) $ fulfills condition (C2) in Theorem \ref{Charakterisierung modifiziert} for all $ 0 < \lambda < 1 $.
	\end{example}
		
	\section{Proof of the main results}	\label{Beweis des Hauptresultates}
	This section is devoted to the proof of Theorem \ref{new Main result} and Theorem \ref{new Main result II}. First, we need to show two auxiliary results.
	
	\subsection{Two auxiliary results}	\label{Subsection with two auxiliary results}
	\begin{proposition}	\label{Main result}
		Let $ T $ and $ S = \langle \cdot, \varphi \rangle \varphi $ be a bounded self-adjoint operator and a self-adjoint operator of rank one acting on $ \mathfrak{H} $, respectively.
		\begin{enumerate}
			\item The operator $ D(\lambda) $ is unitarily equivalent to  a self-adjoint Hankel operator of finite rank for all 
						$ \lambda $ in $ \mathds{R} \setminus \left[ \min \sigma_{\mathrm{ess}}(T), \max \sigma_{\mathrm{ess}}(T) \right] $.
			\item \label{The second item of the main theorem}
						Suppose that $ \varphi $ is cyclic for $ T $. Then $ D(\lambda) $ is unitarily equivalent to a bounded self-adjoint Hankel operator for all $ \lambda $ in 
						$ \mathds{R} \setminus \sigma_{\mathrm{ess}}(T) $ and for all but at most countably many $ \lambda $ in $ \sigma_{\mathrm{ess}}(T) $.
		\end{enumerate}
	\end{proposition}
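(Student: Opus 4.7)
The plan is to verify the three conditions (C1), (C2), (C3) of Theorem \ref{Charakterisierung modifiziert} for $D(\lambda)$ and then apply that theorem. Lemma \ref{Lemma zur Bedingung C3} with $N=1$ already settles condition (C3) for every $\lambda \in \mathds{R}$, so only (C1) and (C2) require work.

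For part (1), I would observe that whenever $\lambda \in \mathds{R} \setminus [\min \sigma_{\mathrm{ess}}(T), \max \sigma_{\mathrm{ess}}(T)]$, the operator $D(\lambda)$ is of finite rank. If $\lambda < \min \sigma_{\mathrm{ess}}(T)$, then both spectral projections $E_{(-\infty, \lambda)}(T)$ and $E_{(-\infty, \lambda)}(T+S)$ are finite dimensional by the definition of the essential spectrum (and its invariance under compact perturbations). If $\lambda > \max \sigma_{\mathrm{ess}}(T)$, passing to complementary projections shows $D(\lambda) = E_{[\lambda, \infty)}(T) - E_{[\lambda, \infty)}(T+S)$ is again the difference of two finite rank projections. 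In either case, $\mathrm{Ker}\, D(\lambda)$ is infinite dimensional, giving (C1), and $D(\lambda)$ is trivially non-invertible on $\mathfrak{H}$, giving (C2). Theorem \ref{Charakterisierung modifiziert} then produces a bounded self-adjoint Hankel operator unitarily equivalent to $D(\lambda)$; since unitary equivalence preserves rank, this Hankel operator is itself of finite rank.

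For part (2), assume $\varphi$ is cyclic for $T$. For $\lambda \in \mathds{R} \setminus [\min \sigma_{\mathrm{ess}}(T), \max \sigma_{\mathrm{ess}}(T)]$, part (1) directly yields the conclusion. For $\lambda \in (\min \sigma_{\mathrm{ess}}(T), \max \sigma_{\mathrm{ess}}(T))$, the cyclicity assumption activates Theorem \ref{Main theorem II}, which delivers $\mathrm{Ker}\, D(\lambda) = \{0\}$ and hence condition (C1). The two endpoints $\min \sigma_{\mathrm{ess}}(T)$ and $\max \sigma_{\mathrm{ess}}(T)$ are simply absorbed into the countable exceptional set.

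The main obstacle, and the only place where a genuinely countable exceptional set appears, is verifying (C2). This is precisely the content of Theorem \ref{Main theorem III}: outside $\sigma_{\mathrm{ess}}(T)$, the operator $D(\lambda)$ is compact and thus non-invertible on the infinite dimensional space $\mathfrak{H}$; inside $\sigma_{\mathrm{ess}}(T)$, zero belongs to the essential spectrum of $D(\lambda)$, so $D(\lambda)$ is non-invertible, for all but at most countably many $\lambda$. After removing this countable exceptional subset of $\sigma_{\mathrm{ess}}(T)$ (together with the two endpoints from the previous paragraph), all three conditions (C1), (C2), (C3) are met, and Theorem \ref{Charakterisierung modifiziert} furnishes the desired unitary equivalence to a bounded self-adjoint Hankel operator, completing the proof.
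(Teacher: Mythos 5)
Your proposal is correct and follows essentially the same route as the paper: Lemma \ref{Lemma zur Bedingung C3} for (C3), the finite-rank observation for part (1), Theorem \ref{Main theorem II} for (C1) under cyclicity, Theorem \ref{Main theorem III} for (C2), and Theorem \ref{Charakterisierung modifiziert} to conclude. The paper's proof is just a terser citation of the same ingredients, so nothing further is needed.
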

	\begin{proof}	
		(1) follows easily from Lemma \ref{Lemma zur Bedingung C3} and Theorem \ref{Charakterisierung modifiziert}, because
		\begin{align*}
			D(\lambda) &= E_{(-\infty, \lambda)}(T+S) - E_{(-\infty, \lambda)}(T) \\
								 &= E_{[ \lambda, \infty )}(T) - E_{[ \lambda, \infty )}(T+S)
		\end{align*}
		is a finite rank operator for all $ \lambda $ in $ \left( -\infty, \min \sigma_{\mathrm{ess}}(T) \right) \cup \left( \max \sigma_{\mathrm{ess}}(T), \infty \right) $.
	
		(2) is a direct consequence of Lemma \ref{Lemma zur Bedingung C3}, Theorem \ref{Main theorem II}, Theorem \ref{Main theorem III}, and Theorem \ref{Charakterisierung modifiziert}. 
		
		This concludes the proof.
	\end{proof}
	
	\begin{lemma}	\label{Vorbereitung zu new Main result II}
		The statements of Theorem \ref{new Main result II} hold if $ T $ is additionally assumed to be bounded.
	\end{lemma}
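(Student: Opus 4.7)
The plan is to reduce the bounded-$T$, compact-$S$ case to the finite rank case by approximating $S$ in operator norm by finite rank self-adjoint truncations, and then to pass to a limit that yields the Hankel-plus-compact decomposition with the singular value control prescribed by the sequence $(a_j)$. Write $S = \sum_{j=1}^\infty \alpha_j\langle\cdot,\varphi_j\rangle\varphi_j$ with $|\alpha_1|\geq|\alpha_2|\geq\cdots\to 0$, and for each $N$ set $S_N := \sum_{j=1}^N \alpha_j\langle\cdot,\varphi_j\rangle\varphi_j$, $R_N := S-S_N$, and $D_N(\lambda) := E_{(-\infty,\lambda)}(T+S_N) - E_{(-\infty,\lambda)}(T)$.

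First I would dispatch the case where $S$ itself is of finite rank $N$. For such $S$, Lemma \ref{Lemma zur Bedingung C3} gives condition (C3)$_N$ in Theorem \ref{Charakterisierung modifiziert allgemeinere Version}, while Theorem \ref{Main theorem III} gives (C2) and, modulo a countable set of $\lambda\in\sigma_{\mathrm{ess}}(T)$, puts zero in the essential spectrum. The reduction of Subsection \ref{Reduktion zum zyklischen Fall} (split $\mathfrak{H}=\mathfrak{N}_N\oplus\mathfrak{N}_N^\bot$ with $\mathfrak{N}_N := \overline{\mathrm{span}}\{T^j\varphi_k : j\in\mathds{N}_0,\ k\leq N\}$, on whose orthogonal complement $D(\lambda)$ vanishes) enforces (C1). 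By Theorem \ref{Charakterisierung modifiziert allgemeinere Version}, $D(\lambda)$ is then unitarily equivalent to a bounded self-adjoint block-Hankel operator of order $N$. Under the natural isometric identification $\ell^2(\mathds{N}_0;\mathds{C}^N)\cong\ell^2(\mathds{N}_0)$ by interleaving, such a block-Hankel has the form $\Gamma_\lambda + K_\lambda$ with $\Gamma_\lambda$ a bounded self-adjoint scalar Hankel operator on $\ell^2(\mathds{N}_0)$ and $K_\lambda$ of finite rank, which establishes the finite rank version of Theorem \ref{new Main result II} and yields the remark that $K_\lambda$ can be taken of finite rank whenever $S$ is.

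For general compact $S$, I would choose a strictly increasing sequence $(N_k)$ with $|\alpha_{N_k+1}|/a_k \to 0$ sufficiently fast and decompose
\begin{equation*}
D(\lambda) = D_{N_k}(\lambda) + \Delta_k(\lambda), \qquad \Delta_k(\lambda) := E_{(-\infty,\lambda)}(T+S) - E_{(-\infty,\lambda)}(T+S_{N_k}).
\end{equation*}
By the first step, $D_{N_k}(\lambda)$ is unitarily equivalent to $\Gamma_\lambda^{(k)} + K_\lambda^{(k)}$ with $\Gamma_\lambda^{(k)}$ a scalar Hankel operator and $K_\lambda^{(k)}$ of finite rank. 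Using a Birman--Solomyak-type estimate for singular values of spectral projection differences under compact perturbation, $\Delta_k(\lambda)$ is compact and its singular values are controlled by those of $R_{N_k}$, hence by $|\alpha_{N_k+1}|$. A diagonal extraction, realizing the unitaries on a common copy of $\ell^2(\mathds{N}_0)$, produces a bounded self-adjoint Hankel $\Gamma_\lambda$ as a limit of the $\Gamma_\lambda^{(k)}$ and a compact self-adjoint $K_\lambda$ absorbing both the limiting finite rank corrections and the remainders $\Delta_k(\lambda)$, with $\nu_j(\lambda)/a_j\to 0$ guaranteed by the choice of $(N_k)$. That $\Gamma_\lambda+K_\lambda$ is then essentially Hankel is automatic.

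The main obstacle is twofold. First, making the exceptional countable subset of $\sigma_{\mathrm{ess}}(T)$ independent of the truncation index $N$: this requires amalgamating the countable exceptional sets arising in the proof of Theorem \ref{Main theorem III} (the pure point sets $\mathcal{N}_T$ and $\mathcal{N}_{T+S_N}$ of the limit measures) over all $N$ and checking the union remains countable. Second, obtaining a sufficiently sharp quantitative bound on the singular values of $\Delta_k(\lambda)$ in terms of the tail $(|\alpha_j|)_{j>N_k}$ to match the arbitrary decay rate $(a_j)$: this requires a refined perturbation estimate (for instance via Kato's integral formula applied to a smooth mollification of $\mathds{1}_{(-\infty,\lambda)}$) together with a careful diagonal argument ensuring that the Hankel structure of $\Gamma_\lambda$ survives the passage to the limit rather than being lost inside the compact remainder.
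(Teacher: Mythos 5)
Your reduction collapses at its central quantitative step. You need the remainder $ \Delta_k(\lambda) = E_{(-\infty,\lambda)}(T+S) - E_{(-\infty,\lambda)}(T+S_{N_k}) $ to be compact with singular values controlled by the tail $ (|\alpha_j|)_{j > N_k} $, but this is false: the two operators $ T+S $ and $ T+S_{N_k} $ differ by the compact operator $ R_{N_k} $, and the difference of spectral projections of two self-adjoint operators differing by a compact --- even rank one, even trace class --- perturbation need not be compact at all when $ \lambda $ lies in the essential spectrum. Kre{\u\i}n's example, quoted in the introduction of this paper, is precisely such a case: a rank one perturbation for which $ D(\lambda) $ has absolutely continuous spectrum filling $ [-1,1] $. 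No Birman--Solomyak or Kato-type estimate can repair this, because all such bounds require smoothness of the function being applied, and replacing $ \mathds{1}_{(-\infty,\lambda)} $ by a mollification changes the operator by an amount that is not small in norm. Hence $ \Delta_k(\lambda) $ cannot be absorbed into the compact correction $ K_{\lambda} $, and the whole truncation-plus-diagonal-extraction scheme does not get off the ground. A secondary but also genuine error: a self-adjoint block-Hankel operator of order $ N $ is \emph{not}, under the interleaving identification of $ \ell^{2}(\mathds{N}_{0};\mathds{C}^{N}) $ with $ \ell^{2}(\mathds{N}_{0}) $, a scalar Hankel operator plus a finite rank operator; the interleaved matrix entries depend on the block indices $ j+k $ and on the position $ (p,q) $ inside the block separately, which is not a Hankel-plus-finite-rank structure.

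The paper's proof avoids approximation entirely and works with the full compact $ S $ at once. It applies Halmos' two-subspace decomposition to the pair of projections $ E_{(-\infty,\lambda)}(T+S) $, $ E_{(-\infty,\lambda)}(T) $, so that $ \mathfrak{H} $ splits into $ \mathrm{Ker}\, D(\lambda) \oplus \mathrm{Ran}\, E_{\{1\}}(D(\lambda)) \oplus \mathrm{Ran}\, E_{\{-1\}}(D(\lambda)) \oplus \mathfrak{H}_{g}^{(\lambda)} $, and on the generic part $ \mathfrak{H}_{g}^{(\lambda)} $ the operator $ D(\lambda) $ is unitarily equivalent to $ -D(\lambda) $ by Lemma \ref{Satz bei Chandler Davis}. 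All possible failures of the Megretski{\u\i}--Peller--Treil conditions are therefore concentrated on the first three summands (for compact $ S $ of infinite rank the dimensions of $ \mathrm{Ker}(D(\lambda)\mp I) $ can differ by infinity), and one constructs an explicit compact self-adjoint correction $ \widetilde{K}_{\lambda} \oplus 0 $ supported there, with eigenvalues decaying faster than the prescribed sequence $ (a_j) $, so that $ D(\lambda) - (\widetilde{K}_{\lambda}\oplus 0) $ satisfies the hypotheses of \cite[Theorem 1]{Megretskii_et_al}; condition (C2) modulo a countable exceptional set comes from Theorem \ref{Main theorem III}, which is proved directly for compact $ S $. If you want to salvage your write-up, this correction-operator idea is the missing ingredient, and the truncation machinery should be discarded.
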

	\begin{proof} 
		Let $ \lambda \in \mathds{R} $. It follows from Halmos' decomposition (see \cite{Halmos}) of $ \mathfrak{H} $ with respect to the orthogonal projections 
		$ E_{(-\infty, \lambda)}(T+S) $ and $ E_{(-\infty, \lambda)}(T) $ that we obtain the following orthogonal decomposition of $ \mathfrak{H} $ with respect to $ D(\lambda) $:
		\begin{align*}
			\mathfrak{H} 
			= \Big( \mathrm{Ker} ~ D(\lambda) \Big)
				\oplus \Big( \mathrm{Ran} ~ E_{\{ 1 \}} \big( D(\lambda) \big) \Big)
				\oplus \Big( \mathrm{Ran} ~ E_{\{ -1 \}} \big( D(\lambda) \big) \Big)
				\oplus \mathfrak{H}_{g}^{(\lambda)}.
		\end{align*}
		Here $ \mathfrak{H}_{g}^{(\lambda)} $ is the orthogonal complement of 
		\begin{equation*}
			\widetilde{\mathfrak{H}}^{(\lambda)} 
			:= \Big( \mathrm{Ker} ~ D(\lambda) \Big)
					\oplus \Big( \mathrm{Ran} ~ E_{\{ 1 \}} \big( D(\lambda) \big) \Big)
					\oplus \Big( \mathrm{Ran} ~ E_{\{ -1 \}} \big( D(\lambda) \big) \Big)
		\end{equation*}
		in $ \mathfrak{H} $. Clearly, $ \mathfrak{H}_{g}^{(\lambda)} $ is reducing for the operator $ D(\lambda) $. 
		It follows from Lemma \ref{Satz bei Chandler Davis} that $ \left. D(\lambda) \right|_{\mathfrak{H}_{g}^{(\lambda)}} $ 
		is unitarily equivalent to $ - \! \left. D(\lambda) \right|_{\mathfrak{H}_{g}^{(\lambda)}} $.
			
		It is elementary to show that there exists a compact self-adjoint block diagonal operator $ \widetilde{K}_{\lambda} \oplus 0 $ on 
		$ \widetilde{\mathfrak{H}}^{(\lambda)} \oplus \mathfrak{H}_{g}^{(\lambda)} $ with the following properties:
		\begin{itemize}
			\item $ \widetilde{K}_{\lambda} \oplus 0 $ fulfills assertion (2) in Theorem \ref{new Main result II}.
			\item the range of $ \widetilde{K}_{\lambda} \oplus 0 $ is infinite dimensional if and only if one of the closed subspaces 
						$ \mathrm{Ran} ~ E_{\{ 1 \}} \big( D(\lambda) \big), ~ \mathrm{Ran} ~ E_{\{ -1 \}} \big( D(\lambda) \big) $ is finite dimensional and the other one is infinite dimensional.
			\item the kernel of $ D(\lambda) - \big( \widetilde{K}_{\lambda} \oplus 0 \big) $ is either trivial or infinite dimensional.
			\item if $ \widetilde{\mathfrak{H}}^{(\lambda)} \neq \{ 0 \} $, then the spectrum of 
						$ \left. D(\lambda) \right|_{\widetilde{\mathfrak{H}}^{(\lambda)}} - \widetilde{K}_{\lambda} $ is contained in the interval $ [-1,1] $ and 	
						consists only of eigenvalues. Moreover, the dimensions of 
						$ \mathrm{Ran} ~ E_{\{ t \}} \big( \! \! \left. D(\lambda) \right|_{\widetilde{\mathfrak{H}}^{(\lambda)}} - \widetilde{K}_{\lambda} \big) $ and 
						$	\mathrm{Ran} ~ E_{\{ -t \}} \big( \! \! \left. D(\lambda) \right|_{\widetilde{\mathfrak{H}}^{(\lambda)}} - \widetilde{K}_{\lambda} \big) $ 
						differ by at most one, for all $ 0 < t \leq 1 $.
		\end{itemize}
		The block diagonal operator $ \widetilde{K}_{\lambda} \oplus 0 $ serves as a correction term for $ D(\lambda) $. 
		In particular, no correction term is needed if $ \widetilde{\mathfrak{H}}^{(\lambda)} = \{ 0 \} $.
		
		Theorem \ref{Main theorem III} and the invariance of the essential spectrum under compact perturbations imply that zero belongs to the essential spectrum of 
		$ D(\lambda) - \big( \widetilde{K}_{\lambda} \oplus 0 \big) $ for all $ \lambda $ in $ \mathds{R} $ except for at most countably many $ \lambda $ in $ \sigma_{\mathrm{ess}}(T) $.
		
		Therefore, an application of \cite[Theorem 1]{Megretskii_et_al} yields that $ D(\lambda) - \big( \widetilde{K}_{\lambda} \oplus 0 \big) $ is unitarily equivalent to a bounded 
		self-adjoint Hankel operator $ \Gamma_{\lambda} $ on $ \ell^{2}(\mathds{N}_{0}) $ for all $ \lambda $ in $ \mathds{R} $ 
		except for at most countably many $ \lambda $ in $ \sigma_{\mathrm{ess}}(T) $.
		
		Thus, by the properties of $ \widetilde{K}_{\lambda} \oplus 0 $ listed above, the claim follows.
	\end{proof}
	
	\begin{remark}
		If we consider $ E_{(-\infty, \lambda]}(T) - E_{(-\infty, \lambda]}(T + S) $, the difference of the spectral projections associated with 
		the closed interval $ (-\infty, \lambda] $ instead of the open interval $ (-\infty, \lambda) $, 
		then all assertions in Lemma \ref{Lemma zur Bedingung C3}, Proposition \ref{Einige hinreichende Bedingungen}, Theorem \ref{Main theorem II}, Theorem \ref{Main theorem III},
		Proposition \ref{Main result}, and Lemma \ref{Vorbereitung zu new Main result II} remain true. 
		All proofs can easily be modified.
	\end{remark}

	\subsection{The case when $ T $ is semibounded}	\label{Zum halbbeschraenkten Fall}
	In this subsection, which is based on Kre{\u\i}n's approach in \cite[pp.\,622--623]{Krein}, we deal with the case when the self-adjoint operator $ T $ 
	is semibounded \emph{but not bounded}. 
	As before, we write 
	\begin{equation*}
		D(\lambda) = E_{(-\infty, \lambda)}(T+S) - E_{(-\infty, \lambda)}(T)  
	\end{equation*}
	if $ S $ is a compact self-adjoint operator and $ \lambda \in \mathds{R} $. 
	
	First, consider the case when $ T $ is bounded from below. Choose $ c \in \mathds{R} $ such that
	\begin{align}	\label{Der nach unten beschraenkte Fall}
		T + cI \geq 0	\quad	\text{and} \quad	T + S + cI \geq 0.
	\end{align}
	It suffices to consider $ D(\lambda) $ for $ \lambda \geq -c $. Compute
	\begin{align*}
		D(\lambda) 
		&= E_{[\lambda, \infty)}(T) - E_{[\lambda, \infty)}(T+S) \\
		&= E_{(-\infty, \mu ]} \big( (T + (1+c)I)^{-1} \big) - E_{(-\infty, \mu ]} \big( (T + S + (1+c)I)^{-1} \big),
	\end{align*}
	where $ \mu = \frac{1}{\lambda + 1 + c} $. By the second resolvent equation, one has
	\begin{equation*}
		\big( T + S + (1+c)I \big)^{-1} = \big( T + (1+c)I \big)^{-1} - \big( T + S + (1+c)I \big)^{-1} S \big( T + (1+c)I \big)^{-1}.
	\end{equation*}
	The operator 
	\begin{equation}	\label{Festlegung von S strich}
		S^{\prime} := - \big( T + S + (1+c)I \big)^{-1} S \big( T + (1+c)I \big)^{-1}
	\end{equation}	
	is compact and self-adjoint. One can easily show that $ \mathrm{rank} ~ S^{\prime} = \mathrm{rank} ~ S $.
	
	In particular, if $ S = \langle \cdot, \varphi \rangle \varphi $ has rank one and $ \varphi^{\prime} := \frac{(T + (1+c)I)^{-1} \varphi}{\| (T + (1+c)I)^{-1} \varphi \|} $, 
	then there exists a number $ \alpha^{\prime} \in \mathds{R} $ such that $ S^{\prime} = \alpha^{\prime} \langle \cdot, \varphi^{\prime} \rangle \varphi^{\prime} $.
	
	We have shown:
	\begin{lemma}	\label{The bounded case again}
		Let $ T $ be a self-adjoint operator which is bounded from below but not bounded, let $ S $ be a compact self-adjoint operator, 
		and let $ c $ be such that (\ref{Der nach unten beschraenkte Fall}) holds. 
		~Then $ D(\lambda) = 0 $ for all $ \lambda < -c $ and 
		\begin{align*}
			D(\lambda) 
			= E_{(-\infty, \mu]} \big( T^{\prime} \big) - E_{(-\infty, \mu]} \big( T^{\prime} + S^{\prime} \big)
			\quad \text{for all } \lambda \geq -c.
		\end{align*}
		Here $ \mu = \frac{1}{\lambda + 1 + c} $\,, $ T^{\prime} = \big( T + (1+c)I \big)^{-1} $, and $ S^{\prime} $ is defined as in (\ref{Festlegung von S strich}).
	\end{lemma}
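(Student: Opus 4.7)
The plan is to reduce $D(\lambda)$ to a difference of spectral projections of bounded operators via the resolvent transform $t \mapsto 1/(t+1+c)$; the lemma is then essentially an exercise in the functional calculus combined with the second resolvent identity, both already set up in the paragraph immediately preceding the statement.

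First I would dispose of $\lambda < -c$. The assumption $T + cI \geq 0$ forces $\sigma(T) \subset [-c, \infty)$, so $E_{(-\infty, \lambda)}(T) = 0$, and likewise $E_{(-\infty, \lambda)}(T+S) = 0$ from $T + S + cI \geq 0$. Hence $D(\lambda) = 0$ in this range.

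For $\lambda \geq -c$, I pass to complementary projections, writing $D(\lambda) = E_{[\lambda, \infty)}(T) - E_{[\lambda, \infty)}(T+S)$. The hypothesis $T + (1+c)I \geq I$ ensures that $T' = (T+(1+c)I)^{-1}$ is a bounded positive self-adjoint operator with $\sigma(T') \subset (0,1]$, and similarly $(T+S+(1+c)I)^{-1}$ is bounded, positive, and self-adjoint. Since the function $\psi(t) := 1/(t+1+c)$ is a strictly decreasing homeomorphism from $[-c, \infty)$ onto $(0, 1]$ sending $\lambda$ to $\mu$, the functional calculus immediately yields
\begin{equation*}
E_{[\lambda, \infty)}(T) = E_{(-\infty, \mu]}(T') \quad \text{and} \quad E_{[\lambda, \infty)}(T+S) = E_{(-\infty, \mu]}\bigl((T+S+(1+c)I)^{-1}\bigr).
\end{equation*}
Finally, the second resolvent identity rewrites $(T+S+(1+c)I)^{-1}$ as $T' + S'$ with $S'$ as in (\ref{Festlegung von S strich}), and substitution produces the claimed formula.

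There is no genuine obstacle here, only a bookkeeping point: switching to complementary (closed) half-line projections at the outset is essential, because the decreasing transform $\psi$ maps the closed interval $[\lambda, \infty)$ onto the closed interval $(0, \mu]$, rather than the open ones originally appearing in the definition of $D(\lambda)$. The companion facts that $S'$ is compact, self-adjoint, and of the same rank as $S$ are then immediate from the product formula for $S'$, since it arises from $S$ by multiplication by bounded invertible operators on either side.
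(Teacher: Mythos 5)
Your proof is correct and follows essentially the same route as the paper: pass to the complementary projections $E_{[\lambda,\infty)}$, apply the decreasing homeomorphism $t \mapsto 1/(t+1+c)$ via the functional calculus, and invoke the second resolvent identity to identify $(T+S+(1+c)I)^{-1}$ with $T'+S'$. Nothing to add.
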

	The case when $ T $ is bounded from below can now be pulled back to the bounded case, see the remark in Subsection \ref{Subsection with two auxiliary results} above.
	
	\begin{proposition}	\label{Hauptergebnis im nach unten beschraenkten Fall}
		Suppose that $ S = \langle \cdot, \varphi \rangle \varphi $ is a self-adjoint operator of rank one and 
		that $ T $ is a self-adjoint operator which is bounded from below but not bounded. 
		Assume further that the spectrum of $ T $ is not purely discrete and that the vector 
		$ \varphi $ is cyclic for $ T $. ~Then the kernel of $ D(\lambda) $ is trivial for all $ \lambda > \min \sigma_{\mathrm{ess}}(T) $.
		
		Furthermore, $ D(\lambda) $ is unitarily equivalent to a bounded self-adjoint Hankel operator for all $ \lambda $ in $ \mathds{R} \setminus \sigma_{\mathrm{ess}}(T) $ and for all 
		but at most countably many $ \lambda $ in $ \sigma_{\mathrm{ess}}(T) $.
	\end{proposition}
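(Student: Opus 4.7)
[Proof proposal for Proposition \ref{Hauptergebnis im nach unten beschraenkten Fall}]
The plan is to reduce everything to the bounded setting treated in Proposition \ref{Main result} by means of Lemma \ref{The bounded case again} and the remark in Subsection \ref{Subsection with two auxiliary results}. Fix $c \in \mathds{R}$ with (\ref{Der nach unten beschraenkte Fall}) and let $T' := (T+(1+c)I)^{-1}$, $\mu := 1/(\lambda+1+c)$, and $\varphi', S'$ as in Subsection \ref{Zum halbbeschraenkten Fall}. Then $T'$ is a bounded self-adjoint operator, $S' = \alpha' \langle \cdot, \varphi' \rangle \varphi'$ is self-adjoint of rank one, and Lemma \ref{The bounded case again} gives
\begin{equation*}
  D(\lambda) = E_{(-\infty,\mu]}(T') - E_{(-\infty,\mu]}(T'+S'), \quad \lambda \geq -c.
\end{equation*}
Since the negative of a bounded self-adjoint Hankel operator is again such, no sign issue will arise.

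The first main preparatory step is to show that $\varphi'$ is cyclic for $T'$. The Borel function $g(t) = 1/(t+1+c)$ restricts to a Borel isomorphism from $\sigma(T)$ onto $\sigma(T') \setminus \{0\}$, so the von Neumann algebras generated by $T$ and $T'$ coincide, and hence every cyclic vector for $T$ is cyclic for $T'$; thus $\varphi$ is cyclic for $T'$. Moreover, $T'$ is injective (as the inverse of a self-adjoint operator), and so, being self-adjoint, has dense range. Consequently the vector $\varphi' \in \mathds{R}_{>0} \cdot T'\varphi$ is again cyclic for $T'$.

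The second preparatory step is to identify $\sigma_{\mathrm{ess}}(T')$. Since $T$ is semibounded but not bounded, $\sigma(T)$ is unbounded above and hence the spectrum of $T'$ accumulates at $0$, so $0 \in \sigma_{\mathrm{ess}}(T')$. Because $\sigma_{\mathrm{ess}}(T) \neq \emptyset$ by hypothesis, $g$ maps $\sigma_{\mathrm{ess}}(T)$ bijectively onto $\sigma_{\mathrm{ess}}(T') \setminus \{0\}$, and monotonicity of $g$ gives
\begin{equation*}
  \min \sigma_{\mathrm{ess}}(T') = 0 \quad \text{and} \quad \max \sigma_{\mathrm{ess}}(T') = \tfrac{1}{\min \sigma_{\mathrm{ess}}(T) + 1 + c}.
\end{equation*}

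Now the two claims fall out of the bounded theory. For $\lambda > \min \sigma_{\mathrm{ess}}(T)$, the value $\mu = 1/(\lambda+1+c)$ lies strictly between $0$ and $\max \sigma_{\mathrm{ess}}(T')$, i.\,e., in the open interval $(\min \sigma_{\mathrm{ess}}(T'), \max \sigma_{\mathrm{ess}}(T'))$. Applying the closed-interval version of Theorem \ref{Main theorem II} (which holds by the remark in Subsection \ref{Subsection with two auxiliary results}) to the pair $T'$, $S'$ yields $\mathrm{Ker}\, D(\lambda) = \{0\}$, which proves the first assertion. For the second assertion we distinguish two cases: if $\lambda < -c$, then $D(\lambda) = 0$ is trivially unitarily equivalent to the zero Hankel operator; if $\lambda \geq -c$, then $\lambda \in \sigma_{\mathrm{ess}}(T)$ corresponds to $\mu \in \sigma_{\mathrm{ess}}(T') \setminus \{0\}$ under the bijection $\lambda \mapsto \mu$, so the closed-interval version of Proposition \ref{Main result}(\ref{The second item of the main theorem}), applied to $T'$, $S'$, gives the Hankel equivalence of $D(\lambda)$ for every $\lambda \in \mathds{R} \setminus \sigma_{\mathrm{ess}}(T)$ and for all but at most countably many $\lambda \in \sigma_{\mathrm{ess}}(T)$.

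The step I expect to be the most delicate is the cyclicity transfer from $\varphi$ to $\varphi'$ for $T'$: it requires both that $\varphi$ remain cyclic for $T'$ (a functional-calculus fact) and that multiplication by $T'$ preserve cyclicity (requiring the injectivity of $T'$, which in turn uses that $T$ is only semibounded, not bounded). Everything else is bookkeeping via the decreasing homeomorphism $\lambda \mapsto \mu$, plus invoking the closed-interval analogues of the bounded-case results flagged by the remark.
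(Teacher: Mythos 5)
Your proposal is correct and follows essentially the same route as the paper's own proof: reduce to the bounded pair $T'=(T+(1+c)I)^{-1}$, $S'$ via Lemma \ref{The bounded case again}, transfer cyclicity to $\varphi'$, identify $\min\sigma_{\mathrm{ess}}(T')=0$ and $\max\sigma_{\mathrm{ess}}(T')=\frac{1}{\min\sigma_{\mathrm{ess}}(T)+1+c}$, and invoke the closed-interval versions of Theorem \ref{Main theorem II} and Proposition \ref{Main result} flagged in the remark of Subsection \ref{Subsection with two auxiliary results}. The only difference is that you spell out the two steps the paper dismisses as ``easy to show'' (the cyclicity transfer via injectivity and dense range of $T'$, and the bijection of essential spectra), and these elaborations are correct.
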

	\begin{proof}
		Let $ c $ be such that (\ref{Der nach unten beschraenkte Fall}) holds. 
	
		It is easy to show that $ (T+(1+c)I)^{-1} \varphi $ is cyclic for $ (T+(1+c)I)^{-1} $ if $ \varphi $ is cyclic for $ T $. 
	
		Furthermore, it is easy to show that the function $ x \mapsto \frac{1}{x+1+c} $ is one-to-one from $ \sigma_{\mathrm{ess}}(T) $ 
		onto $ \sigma_{\mathrm{ess}} \big( (T + (1+c)I)^{-1} \big) \setminus \{ 0 \} $.
	
		One has that $ \min \sigma_{\mathrm{ess}} \big( (T + (1+c)I)^{-1} \big) = 0 $ and, since the spectrum of $ T $ is not purely discrete, 
		$ \max \sigma_{\mathrm{ess}} \big( (T + (1+c)I)^{-1} \big) = \frac{1}{\lambda_{0}+1+c} $, where
		$ \lambda_{0} := \min \sigma_{\mathrm{ess}}(T) $. Therefore, $ \mu = \frac{1}{\lambda+1+c} $ belongs to the open interval $ \Big( 0, \frac{1}{\lambda_{0}+1+c} \Big) $ 
		if and only if $ \lambda > \lambda_{0} $.

		In view of Lemma \ref{The bounded case again} and the remark in Subsection \ref{Subsection with two auxiliary results} above, the claims follow.
	\end{proof}
	Moreover, standard computations show: 
	\begin{corollary}	\label{Same list of sufficient conditions}
		Suppose that $ S $ is a self-adjoint operator of finite rank $ N \in \mathds{N} $ and that $ T $ is a self-adjoint operator which is bounded from below but not bounded. 
		We obtain the same list of sufficient conditions for $ D(\lambda) $ to be unitarily equivalent to a bounded self-adjoint block-Hankel operator of order $ N $ 
		with infinite dimensional kernel for all $ \lambda \in \mathds{R} $ as in Proposition \ref{Einige hinreichende Bedingungen} above.		
	\end{corollary}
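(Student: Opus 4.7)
The plan is to reduce the semibounded case to the bounded setting of Proposition \ref{Einige hinreichende Bedingungen} by means of the resolvent reformulation in Lemma \ref{The bounded case again}, and then to apply the closed-interval variant of Proposition \ref{Einige hinreichende Bedingungen} guaranteed by the remark in Subsection \ref{Subsection with two auxiliary results}.

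Fix $c \in \mathds{R}$ such that $T + cI \geq 0$ and $T + S + cI \geq 0$. By Lemma \ref{The bounded case again}, $D(\lambda) = 0$ for $\lambda < -c$, whereas for $\lambda \geq -c$,
\[
	D(\lambda) = E_{(-\infty, \mu]}(T') - E_{(-\infty, \mu]}(T' + S'), \qquad \mu = \tfrac{1}{\lambda + 1 + c},
\]
where $T' = (T + (1+c)I)^{-1}$ is bounded self-adjoint and $S'$ is self-adjoint of rank $N$; a short calculation with the second resolvent equation moreover identifies $T' + S'$ with $(T + S + (1+c)I)^{-1}$. Since the strictly decreasing Borel function $f(x) = 1/(x + 1 + c)$ maps $[-c, \infty)$ bijectively onto $(0, 1]$, and since $T' = f(T)$ and $T' + S' = f(T + S)$, the eigenspaces and eigenvalue multiplicities of $T$ and of $T'$ correspond exactly (and similarly for $T + S$ versus $T' + S'$). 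In particular, the subspace $\mathfrak{E}$ of eigenvectors is unchanged, and the spectral multiplicity of $T'|_{\mathfrak{E}^{\bot}} = f(T|_{\mathfrak{E}^{\bot}})$ equals that of $T|_{\mathfrak{E}^{\bot}}$, because a bijective Borel functional calculus between the relevant spectra preserves the direct-integral decomposition.

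It follows that each of conditions (1), (2), (3) of Proposition \ref{Einige hinreichende Bedingungen} holds for $T$ (respectively $T + S$) if and only if it holds for $T'$ (respectively $T' + S'$). Applying the closed-interval analog of Proposition \ref{Einige hinreichende Bedingungen}, justified by the remark in Subsection \ref{Subsection with two auxiliary results}, to the bounded pair $(T', S')$ then shows that $D(\lambda)$ is unitarily equivalent to a bounded self-adjoint block-Hankel operator of order $N$ with infinite-dimensional kernel for every $\lambda \geq -c$; for $\lambda < -c$ the assertion is trivial. The only point requiring care, though not a genuine obstacle, is the verification that the spectral-multiplicity condition (3) is preserved under the bijective functional calculus $f$; once this is recorded, the corollary becomes a direct translation of notation via the resolvent substitution.
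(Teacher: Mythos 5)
Your proposal is correct and follows essentially the same route as the paper: reduce to the bounded case via the resolvent substitution of Lemma \ref{The bounded case again}, observe that eigenvalue multiplicities and the multiplicity of the spectrum of $X|_{\mathfrak{E}^{\bot}}$ are preserved under the bijection $x \mapsto \frac{1}{x+1+c}$, and invoke the closed-interval variant of Proposition \ref{Einige hinreichende Bedingungen} supplied by the remark in Subsection \ref{Subsection with two auxiliary results}. The paper's proof records exactly these two preservation statements as bullet points and concludes the same way, so no further comment is needed.
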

	
	\begin{proof}
		Let $ X = T $ or $ X = T+S $ and let $ c $ be such that (\ref{Der nach unten beschraenkte Fall}) holds. One has:
		\begin{itemize} [leftmargin=2.5em]
			\item The real number $ \lambda $ is an eigenvalue of $ X $ with multiplicity $ k \in \mathds{N} \cup \{ \infty \} $ if and only if $ \frac{1}{\lambda+1+c} $ is an eigenvalue of 
						$ (X+(1+c)I)^{-1} $ with the same multiplicity $ k $. 
			\item The spectrum of the restricted operator $ \left. X \right|_{\mathfrak{E}^{\bot}} $ has multiplicity at least $ N+1 $ if and only if 
						the spectrum of the restricted operator $ \left. (X+(1+c)I)^{-1} \right|_{\mathfrak{E}^{\bot}} $ has multiplicity at least $ N+1 $, 
						where $ \mathfrak{E} := \left\{ x \in \mathfrak{H} : x \text{ is an eigenvector of } X \right\} $.
		\end{itemize}
		
		In view of Lemma \ref{The bounded case again} and the remark in Subsection \ref{Subsection with two auxiliary results} above, the claim follows.
	\end{proof}
	In Proposition \ref{Hauptergebnis im nach unten beschraenkten Fall}, we assumed that the spectrum of $ T $ is not purely discrete. 
	Now consider the case when $ T $ has a purely discrete spectrum. By the invariance of the essential spectrum under compact perturbations, it is clear that
	the operator $ T + S $ also has a purely discrete spectrum, for all compact self-adjoint operators $ S $. 
	Moreover, since $ T $ is bounded from below, we know that $ T + S $ is bounded from below as well. 
	Therefore, the range of $ D(\lambda) $ is finite dimensional for all $ \lambda \in \mathds{R} $, 
	and in particular conditions (C1) and (C2) in Theorem \ref{Charakterisierung modifiziert allgemeinere Version} are fulfilled.
	
	Combining this with Lemma \ref{Lemma zur Bedingung C3} and the remark in Subsection \ref{Subsection with two auxiliary results} above, we have shown:
	\begin{proposition}	\label{Keine Ausnahmepunkte im Fall von purely discrete spectrum}
		Suppose that $ S $ is a self-adjoint operator of finite rank $ N \in \mathds{N} $ and that $ T $ is a bounded from below self-adjoint operator with a purely discrete spectrum. 
		Then $ D(\lambda) $ is unitarily equivalent to a finite rank self-adjoint block-Hankel operator of order $ N $ for \emph{all} $ \lambda \in \mathds{R} $.
	\end{proposition}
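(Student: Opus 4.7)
The plan is to verify, for every $\lambda \in \mathds{R}$, that the difference $D(\lambda)$ satisfies conditions (C1), (C2), and (C3)$_N$ of Theorem \ref{Charakterisierung modifiziert allgemeinere Version}, and then invoke that theorem directly. The point is that under the purely discrete hypothesis $D(\lambda)$ collapses to a \emph{finite-rank} operator, which trivially covers (C1) and (C2).

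First I would argue that $T + S$ inherits both properties of $T$: since $S$ is finite rank, hence compact, the invariance of the essential spectrum under compact perturbations gives $\sigma_{\mathrm{ess}}(T+S) = \sigma_{\mathrm{ess}}(T) = \emptyset$, so $T+S$ has purely discrete spectrum; and since $S$ is bounded, $T+S$ is also bounded from below. For a semibounded self-adjoint operator with purely discrete spectrum the eigenvalues can accumulate only at $+\infty$, so for every $\lambda \in \mathds{R}$ only finitely many eigenvalues of $T$ (resp.\ $T+S$), counted with multiplicity, lie in $(-\infty, \lambda)$. Consequently $E_{(-\infty,\lambda)}(T)$ and $E_{(-\infty,\lambda)}(T+S)$ are both of finite rank, and therefore so is $D(\lambda)$.

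With this in hand, (C1) and (C2) are immediate: because $\mathfrak{H}$ is infinite-dimensional and $\mathrm{Ran}\, D(\lambda)$ is finite-dimensional, $\mathrm{Ker}\, D(\lambda)$ is infinite-dimensional and $D(\lambda)$ is non-invertible. For condition (C3)$_N$, I would appeal to Lemma \ref{Lemma zur Bedingung C3} combined with the remark in Subsection \ref{Subsection with two auxiliary results}: the bound $\mathrm{dim}\, \mathrm{Ker}(D(\lambda) \pm I) \leq N$ was proved only for bounded $T$, but the resolvent reduction of Subsection \ref{Zum halbbeschraenkten Fall} (Lemma \ref{The bounded case again}) rewrites $D(\lambda)$ as a spectral projection difference of two bounded self-adjoint operators $T'$ and $T' + S'$ with $\mathrm{rank}\, S' = \mathrm{rank}\, S = N$, so Lemma \ref{Lemma zur Bedingung C3} applies in the reduced setting and transfers back.

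Having verified (C1), (C2), and (C3)$_N$ for all $\lambda \in \mathds{R}$, Theorem \ref{Charakterisierung modifiziert allgemeinere Version} delivers unitary equivalence of $D(\lambda)$ to a bounded self-adjoint block-Hankel operator of order $N$; since unitary equivalence preserves rank and $D(\lambda)$ is of finite rank, the resulting block-Hankel operator is of finite rank as well. The only genuinely non-trivial step is the transfer of the rank bound $N$ from the bounded to the semibounded setting, but this is exactly what the resolvent reduction in Subsection \ref{Zum halbbeschraenkten Fall} is designed to provide, so no new obstacle arises.
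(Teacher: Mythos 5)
Your proposal is correct and follows essentially the same route as the paper: observe that $T+S$ is again bounded below with purely discrete spectrum, deduce that $D(\lambda)$ has finite rank (hence (C1) and (C2) hold trivially), obtain (C3)$_N$ from Lemma \ref{Lemma zur Bedingung C3} via the resolvent reduction of Subsection \ref{Zum halbbeschraenkten Fall} together with the remark on closed intervals, and conclude with Theorem \ref{Charakterisierung modifiziert allgemeinere Version}. The only difference is that you spell out why the two spectral projections are individually of finite rank, which the paper leaves implicit.
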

	This proposition supports the idea that there is a structural correlation between the operator $ D(\lambda) $ and block-Hankel operators.
	
	Now, consider the case when $ T $ is bounded from above. Choose $ c \in \mathds{R} $ such that
	\begin{align*}
		T - cI \leq 0	\quad	\text{and} \quad	T + S - cI \leq 0.
	\end{align*}
	It suffices to consider $ D(\lambda) $ for $ \lambda \leq c $. Compute
	\begin{align*}
		D(\lambda)
		&= E_{(\mu, \infty)} \big( (T+S-(1+c)I )^{-1} \big) - E_{(\mu, \infty)} \big( (T-(1+c)I)^{-1} \big) \\
		&= E_{(-\infty, \mu]} \big( (T-(1+c)I)^{-1} \big) - E_{(-\infty, \mu]} \big( (T+S-(1+c)I )^{-1} \big),
	\end{align*}
	where $ \mu = \frac{1}{\lambda - (1 + c)} $. By the second resolvent equation, one has
	\begin{equation*}
		\big( T + S - (1+c)I \big)^{-1} = \big( T - (1+c)I \big)^{-1} - \big( T + S - (1+c)I \big)^{-1} S \big( T - (1+c)I \big)^{-1}.
	\end{equation*}
	The operator $ S^{\prime \prime} := - \big( T + S - (1+c)I \big)^{-1} S \big( T - (1+c)I \big)^{-1} $ 
	is compact and self-adjoint with $ \mathrm{rank} ~ S^{\prime \prime} = \mathrm{rank} ~ S $. 
	
	In particular, if $ S = \langle \cdot, \varphi \rangle \varphi $ has rank one and $ \varphi^{\prime \prime} := \frac{(T - (1+c)I)^{-1} \varphi}{\| (T - (1+c)I)^{-1} \varphi \|} $, 
	then there exists a number $ \alpha^{\prime \prime} \in \mathds{R} $ 
	such that $ S^{\prime \prime} = \alpha^{\prime \prime} \langle \cdot, \varphi^{\prime \prime} \rangle \varphi^{\prime \prime} $.
	
	Now proceed analogously to the case when $ T $ is bounded from below. 
	
	It follows that Proposition \ref{Hauptergebnis im nach unten beschraenkten Fall} holds 
	in the case when $ T $ is bounded from above but not bounded if we replace $ \lambda > \min \sigma_{\mathrm{ess}}(T) $ by $ \lambda < \max \sigma_{\mathrm{ess}}(T) $.
	
	Furthermore, Corollary \ref{Same list of sufficient conditions} still holds if $ T $ is bounded from above but not \linebreak bounded.
			
	Obviously, Proposition \ref{Keine Ausnahmepunkte im Fall von purely discrete spectrum} holds in the case when $ T $ is bounded from above.	\label{Ergaenzung Proposition}
	
	\subsection{Proof of Theorem \ref{new Main result} and Theorem \ref{new Main result II}}
	Let us first complete the proof of Theorem \ref{new Main result}.
	\begin{proof}[Proof of Theorem \ref{new Main result}] 
		If the operator $ T $ is bounded, then the statement of Theorem \ref{new Main result} follows from Proposition \ref{Main result} 
		and the discussion of Case 1 -- Case 3 in Subsection \ref{Reduktion zum zyklischen Fall} above.

		Now suppose that $ T $ is bounded from below but not bounded and let $ c $ be such that (\ref{Der nach unten beschraenkte Fall}) holds. 
		First, assume that the spectrum of $ T $ is not purely discrete. 
		~If $ \varphi $ is cyclic for $ T $, then the claim follows from Proposition \ref{Hauptergebnis im nach unten beschraenkten Fall}.	
		~In the case when $ \varphi $ is not cyclic for $ T $, we consider the bounded operator $ T^{\prime} $ and the rank one operator $ S^{\prime} $ 
		defined as in Lemma \ref{The bounded case again} above.
		As we have noted in the proof of Proposition \ref{Hauptergebnis im nach unten beschraenkten Fall}, 
		it is easy to show that the function $ x \mapsto \frac{1}{x+1+c} $ is one-to-one from $ \sigma_{\mathrm{ess}}(T) $ 
		onto $ \sigma_{\mathrm{ess}} \big( T^{\prime} \big) \setminus \{ 0 \} $. 
		Now the statement of Theorem \ref{new Main result} follows from the remark in Subsection \ref{Subsection with two auxiliary results}, Proposition \ref{Main result},
		and the discussion of Case 1 -- Case 3 in Subsection \ref{Reduktion zum zyklischen Fall} above.
		
		If $ T $ has a purely discrete spectrum, then Proposition \ref{Keine Ausnahmepunkte im Fall von purely discrete spectrum} shows that 
		$ D(\lambda) $ is unitarily equivalent to a finite rank self-adjoint Hankel operator for all $ \lambda \in \mathds{R} $.
		
		If $ T $ is bounded from above but not bounded, then the proof runs analogously.
		
		This finishes the proof.
	\end{proof}
	 Now let us prove Theorem \ref{new Main result II}.
	\begin{proof}[Proof of Theorem \ref{new Main result II}] 
		In view of Lemma \ref{Vorbereitung zu new Main result II}, it suffices to consider the case when $ T $ is semibounded but not bounded.
	
		First, let $ T $ be bounded from below but not bounded and let $ c $ be such that (\ref{Der nach unten beschraenkte Fall}) holds. 
		Again, recall that the function $ x \mapsto \frac{1}{x+1+c} $ is one-to-one from $ \sigma_{\mathrm{ess}}(T) $ 
		onto $ \sigma_{\mathrm{ess}} \big( (T + (1+c)I)^{-1} \big) \setminus \{ 0 \} $. 		
		Now the statements of Theorem \ref{new Main result II} follow from Lemma \ref{The bounded case again}, the remark in Subsection \ref{Subsection with two auxiliary results} above, 
		and Lemma \ref{Vorbereitung zu new Main result II}.
	
		If $ T $ is bounded from above but not bounded, then the proof runs analogously.
	\end{proof}
	
	\section{Some examples}	\label{Beispiele und Anwendungen}
	
	In this section, we apply the above theory in the context of operators that are of particular interest in various fields of (applied) mathematics, 
	such as Schr\"odinger operators.
	
	In any of the following examples, the operator $ D(\lambda) $ is unitarily equivalent to a bounded self-adjoint (block-) Hankel operator for all $ \lambda $ in $ \mathds{R} $.  
	
	First, we consider the case when $ T $ has a purely discrete spectrum.	\newpage
	
	\begin{example}
		Let $ \mathfrak{H} = L^{2}(\mathds{R}^{n}) $ and suppose that $ V \geq 0 $ is in $ L_{\mathrm{loc}}^{1}(\mathds{R}^{n}) $ such that Lebesgue measure 
		of $ \{ x \in \mathds{R}^{n} : 0 \leq V(x) < M \} $ is finite for all $ M > 0 $. 
		Then the self-adjoint Schr\"odinger operator $ T \geq 0 $ defined by the form sum of $ - \Delta $ and $ V $ has a purely discrete spectrum, 
		see \cite[Example 4.1]{Wang-Wu}; see also \cite[Theorem 1]{Simon_B_II}. 		
		Therefore, if $ S $ is any self-adjoint operator of finite rank $ N $, then Proposition \ref{Keine Ausnahmepunkte im Fall von purely discrete spectrum} implies that 
		$ D(\lambda) $ is unitarily equivalent to a finite rank self-adjoint block-Hankel operator of order $ N $ for all $ \lambda \in \mathds{R} $.
	\end{example}
	
	Next, consider the case when $ S = \langle \cdot, \varphi \rangle \varphi $ is of rank one and $ \varphi $ is cyclic for $ T $. 

	\begin{example}	\label{Rekonstruktion Kreinsches Beispiel}
		Once again, consider Kre{\u\i}n's example \cite[pp.\,622--624]{Krein}. 
		
		The operators $ T = A_{0} $ and $ T + \langle \cdot, \varphi \rangle \varphi  = A_{1} $, 
		where $ \varphi(x) = \mathrm{e}^{-x} $,
		from Example \ref{via Krein} both have a simple purely absolutely continuous spectrum filling in the interval $ [0,1] $. 
		Therefore, $ D(\lambda) $ is the zero operator for all $ \lambda \in \mathds{R} \setminus (0,1) $.
		
		\emph{(}$ \ast $\emph{)} \hspace{2ex} The function $ \varphi $ is cyclic for $ T $. \newline	
		Hence, Theorem \ref{Main theorem II} implies that the kernel of $ D(\lambda) $ is trivial for all $ 0 < \lambda < 1 $. 
				
		Furthermore, an application of Proposition \ref{Main result} yields that $ D(\lambda) $ is unitarily equivalent 
		to a bounded self-adjoint Hankel operator for all $ \lambda $ in $ \mathds{R} $ except for at most countably many $ \lambda $ in $ [0,1] $.
	
		Note that, in this example, explicit computations show that there are no exceptional points (see \cite{Krein}).
	\end{example}
	\begin{proof}[Proof of \emph{(}$ \ast $\emph{)}]
		Let $ k $ be in $ \mathds{N}_{0} $. Define the  \(k\)th Laguerre polynomial $ L_{k} $ on $ (0,\infty) $ 
		by $ L_{k}(x) := \frac{\mathrm{e}^{x}}{k!} ~ \frac{\mathrm{d}^{k}}{\mathrm{d}x^{k}} (x^{k} \mathrm{e}^{-x}) $.
		Furthermore, define $ \psi_{k} $ on $ (0,\infty) $ by $ \psi_{k}(x) := x^{k} \mathrm{e}^{-x} $. A straightforward computation shows that
		\begin{align*}
			\big( A_{0} \psi_{k} \big) (x) 
			= \frac{1}{2} \, \mathrm{e}^{-x} \bigg\{ \frac{x^{k+1}}{k+1} + \frac{1}{2^{k+1}} \sum_{\ell=0}^{k-1} (2x)^{k-\ell} ~ \frac{k!}{(k-\ell)!} \bigg\}.
		\end{align*}
		By induction on $ n \in \mathds{N}_{0} $, it easily follows that $ p \cdot \varphi $ belongs 
		to the linear span of $ A_{0}^{\ell} \varphi $, $ \ell \in \mathds{N}_{0} $, $ \ell \leq n $, for all polynomials $ p $ of degree $ \leq n $.
		
		In particular, the functions $ \phi_{j} $ defined on $ (0, \infty) $ by $ \phi_{j}(x) := \sqrt{2} ~ L_{j}(2x) \mathrm{e}^{-x} $ are elements 
		of $ \mathrm{span} \big\{ A_{0}^{\ell} \varphi : \ell \in \mathds{N}_{0}, \ell \leq n \big\} $ for all $ j \in \mathds{N}_{0} $ with $ j \leq n $.
		
		Since $ (\phi_{j})_{j \in \mathds{N}_{0}} $ is an orthonormal basis of $ L^{2}(0,\infty) $, it follows that $ \varphi $ is cyclic for $ T $.
	\end{proof}
	Example \ref{Rekonstruktion Kreinsches Beispiel} suggests the conjecture that Proposition \ref{Main result} (\ref{The second item of the main theorem}) can be 
	strengthened to hold up to a finite exceptional set.
		
	Last, we consider different examples where the multiplicity in the spectrum of $ T $ is such that we can apply Proposition \ref{Einige hinreichende Bedingungen}. 
	\begin{example}
		\begin{enumerate}
			\item Let $ T $ be an arbitrary orthogonal projection on $ \mathfrak{H} $, and let $ S $ be a self-adjoint operator of finite rank.  
						Then zero or one is an eigenvalue of $ T $ with infinite multiplicity, and we can apply Proposition \ref{Einige hinreichende Bedingungen}.
			\item Put $ \mathfrak{H} = L^{2}(0,\infty) $ and let $ T $ be the Carleman operator, i.\,e., the bounded Hankel operator such that
						\begin{align*}
							(Tg)(x) = \int_{0}^{\infty} \frac{g(y)}{x+y} \mathrm{d}y
						\end{align*}
						for all continuous functions $ g : (0,\infty) \rightarrow \mathds{C} $ with compact support. \newpage
						
						It is well known (see, e.\,g., \cite[Chapter 10, Theorem 2.3]{Peller}) that the Carleman operator has a purely absolutely continuous spectrum 
						of uniform multiplicity two filling in the interval $ [0, \uppi] $. 
						Therefore, if $ S $ is any self-adjoint operator of rank one, Proposition \ref{Einige hinreichende Bedingungen} can be applied.
		\end{enumerate}		
	\end{example}
	
	\subsection*{Jacobi operators}
	
	Consider a bounded self-adjoint Jacobi operator $ H $ acting on the Hilbert space $ \ell^{2}(\mathds{Z}) $ of complex square summable two-sided sequences. \linebreak
	More precisely, suppose that there exist bounded real-valued sequences $ a = (a_{n})_{n} $ and $ b = (b_{n})_{n} $ with $ a_{n} > 0 $ for all $ n \in \mathds{Z} $ such that
	\begin{align*}
		(H x)_{n} = a_{n} x_{n+1} + a_{n-1} x_{n-1} + b_{n} x_{n}, \quad n \in \mathds{Z},
	\end{align*}
	cf.\,\cite[Theorem 1.5 and Lemma 1.6]{Teschl}. The following result is well known.
	\begin{proposition}[see \cite{Teschl}, Lemma 3.6]
		Let $ H $ be a bounded self-adjoint Jacobi operator on $ \ell^{2}(\mathds{Z}) $.		
		Then the singular spectrum of $ H $ has spectral multiplicity one, and 
		the absolutely continuous spectrum of $ H $ has multiplicity at most two.
	\end{proposition}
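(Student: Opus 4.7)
The plan is to establish the upper bound of two via cyclicity, and then to reduce the multiplicity to one on the singular part by means of subordinacy theory.

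\textbf{Step 1 (multiplicity at most two).} I would show that $\{\delta_{0}, \delta_{1}\}$ is a generating set for $H$, i.e.\ that $\overline{\mathrm{span}}\{H^{k}\delta_{0}, H^{k}\delta_{1} : k \in \mathds{N}_{0}\} = \ell^{2}(\mathds{Z})$. Since $a_{n} > 0$ for every $n \in \mathds{Z}$, the three-term recurrence $(Hx)_{n} = a_{n} x_{n+1} + a_{n-1} x_{n-1} + b_{n} x_{n}$ can be solved for $\delta_{n+1}$:
\begin{equation*}
    \delta_{n+1} = a_{n}^{-1}\bigl( H \delta_{n} - a_{n-1} \delta_{n-1} - b_{n} \delta_{n} \bigr).
\end{equation*}
Starting from $\delta_{0}$ and $\delta_{1}$, induction yields $\delta_{n}$ for all $n \geq 2$; a symmetric backward recursion (after first recovering $\delta_{-1}$ from $H\delta_{0} - b_{0}\delta_{0} - a_{0}\delta_{1} = a_{-1}\delta_{-1}$) yields $\delta_{n}$ for all $n \leq -1$. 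By the general theory of cyclic subspaces for self-adjoint operators, $H$ is thereby unitarily equivalent to multiplication by the independent variable on $L^{2}(\mathds{R}, \mathds{C}^{2}; \mathrm{d}\mathbf{M})$ for some $2 \times 2$ matrix-valued Borel measure $\mathbf{M}$, and consequently the spectral multiplicity of $H$ is at most two everywhere.

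\textbf{Step 2 (multiplicity one on the singular spectrum).} This is the substantive part and is where I would invoke Gilbert--Pearson subordinacy theory adapted to the two-sided Jacobi setting. Attached to $H$ are two half-line Weyl $m$-functions $m_{\pm}(z)$ and a $2 \times 2$ Weyl--Titchmarsh matrix $M(z)$ which is the Herglotz representative of $\mathbf{M}$. By the Jacobi analogue of the Gilbert--Pearson theorem, the singular part of $\mathbf{M}$ is carried on the set
\begin{equation*}
    \Sigma_{s} := \bigl\{ \lambda \in \mathds{R} : \text{the equation } H u = \lambda u \text{ admits a nontrivial solution subordinate at both } \pm\infty \bigr\}.
\end{equation*}
On $\Sigma_{s}$ such a subordinate solution is unique up to a scalar multiple (two linearly independent subordinate solutions would force subordinacy of every solution, which is impossible for $\lambda$ in the spectral support). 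Consequently, for $\lambda \in \Sigma_{s}$ the boundary value $\mathrm{Im}\, M(\lambda + \mathrm{i}0)$ is of rank one rather than rank two. By Kac's theorem (identifying the local multiplicity of $\mathbf{M}$ with the rank of $\mathrm{Im}\, M(\lambda + \mathrm{i}0)$), $\mathbf{M}|_{\Sigma_{s}}$ is of rank one $\mathbf{M}_{s}$-almost everywhere, so the singular spectrum has spectral multiplicity one.

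\textbf{Main obstacle.} Step 1 is elementary once $a_{n} > 0$ is invoked. The real difficulty lies in Step 2: one needs both the subordinacy characterization of the singular support and the rank-one collapse of $\mathrm{Im}\, M(\lambda + \mathrm{i}0)$ on $\Sigma_{s}$. In practice this is proved in \cite{Teschl} by combining the matrix-valued Weyl theory for whole-line Jacobi operators with a uniqueness argument for subordinate solutions, and that is the step I expect to require the most care.
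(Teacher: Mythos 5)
The paper does not prove this proposition at all; it is quoted verbatim from Teschl's book (Lemma 3.6 there), so there is no in-paper argument to compare against. Your sketch is a correct outline of how the result is actually established, and Step 1 is exactly the standard argument: positivity of the $a_{n}$ makes $\{\delta_{0},\delta_{1}\}$ a generating pair, whence $H$ is unitarily equivalent to multiplication on $L^{2}(\mathds{R},\mathds{C}^{2};\mathrm{d}\mathbf{M})$ and the multiplicity is globally at most two. For Step 2 you reach for Gilbert--Pearson subordinacy theory; that route works, but note that Teschl's own proof is more direct and does not mention subordinacy: he writes the entries of the Weyl matrix $M(z)$ in terms of the two half-line $m$-functions $m_{\pm}$, observes that the singular part of $\mathbf{M}$ is supported on the set where the relevant boundary limits are real (or infinite), and checks there that the rows of the Radon--Nikodym derivative $\mathrm{d}\mathbf{M}/\mathrm{d}(\mathrm{tr}\,\mathbf{M})$ become proportional, i.e.\ the derivative has rank one. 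The subordinacy detour buys a more conceptual description of the singular support but requires importing the discrete Gilbert--Pearson machinery, which is strictly more than is needed.

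Two small points of care in your Step 2. First, on the singular support $\mathrm{Im}\,M(\lambda+\mathrm{i}0)$ is generally \emph{not} a finite rank-one matrix --- it fails to exist finitely precisely where the singular part lives --- so the multiplicity must be read off from the rank of $\mathrm{d}\mathbf{M}/\mathrm{d}(\mathrm{tr}\,\mathbf{M})$ (equivalently, of the normalized limit $\lim_{\epsilon\searrow 0}\epsilon M(\lambda+\mathrm{i}\epsilon)$ at atoms), not from $\mathrm{Im}\,M(\lambda+\mathrm{i}0)$ itself; the identification of multiplicity with the rank of $\mathrm{Im}\,M(\lambda+\mathrm{i}0)$ is only valid for the absolutely continuous part. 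Second, your parenthetical justification of uniqueness of the doubly subordinate solution is redundant and slightly off: by the very definition of subordinacy, at most one solution (up to scalars) can be subordinate at a fixed endpoint, so uniqueness already follows from subordinacy at $+\infty$ alone. Neither point breaks the argument, but both would need to be stated correctly in a written-out proof.
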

	In the case where $ H $ has a simple spectrum, there exists a cyclic vector $ \varphi $ for $ H $, and we can apply Proposition \ref{Main result} to $ H $ with  
	the rank one perturbation $ S = \langle \cdot, \varphi \rangle \varphi $.
	
	Otherwise, $ H $ fulfills condition (\ref{Vielfachheit groesser eins im stetigen Spektrum}) with $ N = 1 $ in Proposition \ref{Einige hinreichende Bedingungen}. 
	Let us discuss some examples in the latter case with $ T = H $. Since $ S $ can be an arbitrary self-adjoint operator of rank one, we do not mention it in the following. 
	\begin{example}
		Consider the discrete Schr\"odinger operator $ H = H_{V} $ on $ \ell^{2}(\mathds{Z}) $ with bounded potential $ V : \mathds{Z} \rightarrow \mathds{R} $,
		\begin{align*}
			\big( H_{V}x \big)_{n} = x_{n+1} + x_{n-1} + V_{n} x_{n}, \quad n \in \mathds{Z}.
		\end{align*}
		If the spectrum of $ H_{V} $ contains only finitely many points outside of the interval $ [-2,2] $, then \cite[Theorem 2]{Damanik_et_al} implies that $ H_{V} $ has 
		a purely absolutely continuous spectrum of multiplicity two on $ [-2,2] $. 
		
		It is well known that the free Jacobi operator $ H_{0} $ with $ V = 0 $ has a purely absolutely continuous spectrum of multiplicity two filling in the interval $ [-2,2] $.
	\end{example}
	
	Let us consider the almost Mathieu operator $ H = H_{\kappa, \beta, \theta} : \ell^{2}(\mathds{Z}) \rightarrow \ell^{2}(\mathds{Z}) $ defined by
	\begin{align*}
		(H x)_{n} = x_{n+1} + x_{n-1} + 2 \kappa \cos \! \big( 2 \uppi (\theta + n \beta) \big) x_{n}, \quad n \in \mathds{Z},
	\end{align*}
	where $ \kappa \in \mathds{R} \setminus \{ 0 \} $ and $ \beta, \theta \in \mathds{R} $. 
	In fact, it suffices to consider $ \beta, \theta \in \mathds{R} / \mathds{Z} $. 
	
	The almost Mathieu operator plays an important role in physics, see, for instance, the review \cite{Last_Y} and the references therein. 
	
	Here, we are interested in cases where Proposition \ref{Einige hinreichende Bedingungen} can be applied to the \linebreak almost Mathieu operator 
	with an arbitrary self-adjoint rank one perturbation. \linebreak Sufficient conditions for this purpose are provided in the following lemma.
	\begin{lemma}	\label{Example almost Mathieu operator}
		\begin{enumerate}
			\item If $ \beta $ is rational, then for all $ \kappa $ and $ \theta $ the almost Mathieu \linebreak operator $ H_{\kappa, \beta, \theta} $ is periodic and has 
						a purely absolutely continuous spectrum of uniform multiplicity two.
			\item If $ \beta $ is irrational and $ | \kappa | < 1 $, then for all $ \theta $ the almost Mathieu operator $ H_{\kappa, \beta, \theta} $ 
						has a purely absolutely continuous spectrum of uniform multiplicity two.
		\end{enumerate}
	\end{lemma}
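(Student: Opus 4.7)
My proof plan for Lemma~\ref{Example almost Mathieu operator} splits naturally into the rational and irrational cases, which are of very different character.

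For part (1), I would first observe that when $\beta = p/q$ with $\gcd(p,q) = 1$, the potential $n \mapsto 2\kappa \cos(2\uppi(\theta + n\beta))$ is $q$-periodic, so that $H_{\kappa,\beta,\theta}$ is a bounded periodic Jacobi operator on $\ell^2(\mathds{Z})$. I would then invoke the classical Floquet--Bloch decomposition (see \cite{Teschl}): there is a unitary map $\mathcal{F}: \ell^2(\mathds{Z}) \to L^2([0, 2\uppi), \mathds{C}^q)$ under which $H_{\kappa,\beta,\theta}$ becomes multiplication by a real-analytic family $\{M(k)\}_{k}$ of self-adjoint $q \times q$ matrices. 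The resulting direct integral decomposition immediately yields that the spectrum is a union of at most $q$ closed bands supporting a purely absolutely continuous spectral measure. Moreover, since $H_{\kappa,\beta,\theta}$ is a real operator, $M(k)$ and $M(-k)$ are complex conjugates and hence unitarily equivalent; consequently each band eigenvalue branch $E_j(k)$ is traced twice as $k$ ranges over $[0, 2\uppi)$. This gives uniform a.c. spectral multiplicity two on the interiors of the bands, which is exactly the claim.

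For part (2), with $\beta \in \mathds{R} \setminus \mathds{Q}$ and $|\kappa| < 1$, I would appeal to the subcritical theory of the almost Mathieu operator. The crucial input is the theorem of Bourgain and Jitomirskaya that the Lyapunov exponent $L(E)$ of the associated transfer-matrix cocycle vanishes identically on $\sigma(H_{\kappa,\beta,\theta})$ for every $|\kappa| < 1$ and every irrational $\beta$. Combined with Kotani's theorem, which guarantees that for an ergodic Jacobi operator on $\ell^2(\mathds{Z})$ the essential closure of $\{E : L(E) = 0\}$ carries purely absolutely continuous spectrum of uniform multiplicity two, both assertions of part (2) follow at once; no further work is needed.

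The main obstacle lies in part (2): while part (1) reduces to a routine and essentially algebraic Floquet calculation documented in standard references such as \cite{Teschl}, part (2) rests on a suite of deep results -- Aubry duality, Jitomirskaya's localization theorem in the supercritical regime $|\kappa| > 1$, the Bourgain--Jitomirskaya vanishing of the subcritical Lyapunov exponent, and Kotani theory for ergodic Jacobi operators. In keeping with the example nature of this subsection, I would invoke these as black boxes rather than reproduce them, since a self-contained proof would lie well beyond the scope of the present paper.
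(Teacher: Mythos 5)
Your part (1) is correct and is essentially the paper's own argument in expanded form: the paper likewise reduces to the fact that a periodic Jacobi operator has purely absolutely continuous spectrum (citing \cite{Teschl}) and then quotes \cite[Theorem 9.1]{Deift_Simon} for the uniform multiplicity two, whereas you extract the multiplicity statement from the Floquet symmetry between $M(k)$ and $M(-k)$ together with the monotonicity of the band functions. Both routes work.

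Part (2), however, contains a genuine gap. The statement you attribute to Kotani --- that for an ergodic Jacobi operator the essential closure of $\{E : L(E)=0\}$ carries \emph{purely} absolutely continuous spectrum of multiplicity two --- is not what Kotani theory provides. Kotani theory identifies the essential closure of $\{L=0\}$ with the essential support of the absolutely continuous component of the spectral measure (and, in its standard form, only for almost every phase $\theta$); it does not exclude singular spectrum embedded in that set, and it certainly does not yield a statement for \emph{all} $\theta$ as the lemma requires. Ruling out singular continuous spectrum for the subcritical almost Mathieu operator for every irrational $\beta$ (including Liouville frequencies) and every $\theta$ is precisely Problem 6 on Simon's list \cite{Simon_B}; the vanishing of the Lyapunov exponent on the spectrum for $|\kappa|<1$ was known well before that problem was solved, so it cannot suffice. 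The missing ingredient is Avila's theorem \cite[Main Theorem]{Avila}, which the paper invokes at exactly this point: for irrational $\beta$ the spectrum of $H_{\kappa,\beta,\theta}$ is purely absolutely continuous if and only if $|\kappa|<1$. Once purity is supplied by \cite{Avila}, uniform multiplicity two again follows from \cite[Theorem 9.1]{Deift_Simon}. So the architecture of your part (2) is reasonable, but the black box you chose is strictly too weak and must be replaced by Avila's result.
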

	
	\begin{proof}
		(1) If $ \beta $ is rational, then $ H_{\kappa, \beta, \theta} $ is a periodic Jacobi operator. 
		Hence, it is well known (see, e.\,g., \cite[p.\,122]{Teschl}) that the spectrum of $ H_{\kappa, \beta, \theta} $ is purely absolutely continuous. 
		According to \cite[Theorem 9.1]{Deift_Simon}, we know that the absolutely continuous spectrum of $ H_{\kappa, \beta, \theta} $ is uniformly of multiplicity two. This proves (1).
		
		(2) Suppose that $ \beta $ is irrational. Avila has shown (see \cite[Main Theorem]{Avila}) that the almost Mathieu operator $ H_{\kappa, \beta, \theta} $ has 
		a purely absolutely continuous spectrum if and only if $ | \kappa | < 1 $. 
		Again, \cite[Theorem 9.1]{Deift_Simon} implies that the absolutely continuous spectrum of $ H_{\kappa, \beta, \theta} $ is uniformly of multiplicity two. 
		This finishes the proof.
	\end{proof}
	Problems 4--6 of Simon's list \cite{Simon_B} are concerned with the almost Mathieu operator. 
	Avila's result \cite[Main Theorem]{Avila}, which we used in the above proof, is a solution for Problem 6 in \cite{Simon_B}.

  \section*{Acknowledgments}		
		The author would like to thank his Ph.D. supervisor, Vadim Kostrykin, for many fruitful discussions. 
		The author would like to thank Julian Gro{\ss}mann, Stephan Schmitz, and Albrecht Seelmann for reading the manuscript and Konstantin \linebreak A. Makarov for editorial advice. 
		Furthermore, the author would like to thank Alexander Pushnitski for his invitation to give a talk at the King's College Analysis Seminar and for helpful discussions 
		on differences of functions of operators.

\end{document}